\newtheorem{theorem}{Theorem}
\newtheorem{definition}{Definition}
\newtheorem{lemma}{Lemma}[section]
\newtheorem{proposition}{Proposition}[section]
\begin{document}
\thispagestyle{empty}

\begin{center}{\it \large Vladislav E. Kruglov\footnote{\normalsize HSE Campus in Nizhny Novgorod, Faculty of Informatics, Mathematics, and Computer Science, Laboratory of Topological Methods in Dynamics. Trainee Researcher. Lobachevsky State University of Nizhny Novgorod, Institute ITMM, Department of Mathematical Physics and Optimal Control. Master's student; E-mail: vekruglov@hse.ru}, Dmitry S. Malyshev\footnote{\normalsize HSE Campus in Nizhny Novgorod, Laboratory of Algorithms and Technologies for Networks Analysis. Leading Research Fellow. HSE Campus in Nizhny Novgorod,  Faculty of Informatics, Mathematics, and Computer Science, Department of Applied Mathematics and Informatics. Professor.
Lobachevsky State University of Nizhny Novgorod, Institute ITMM, Department of Algebra, Geometry and Discrete Mathematics. Professor; Doctor of Sciences; E-mail: dmalishev@hse.ru}, Olga V. Pochinka\footnote{\normalsize HSE Campus in Nizhny Novgorod, Faculty of Informatics, Mathematics, and Computer Science, Department of Fundamental Mathematics. Professor, Department Head. HSE Campus in Nizhny Novgorod, Faculty of Informatics, Mathematics, and Computer Science, Laboratory for Topological Methods in Dynamics. Laboratory Head; Doctor of Sciences; E-mail: opochinka@hse.ru}}\end{center}

\vskip 0.35 true cm

\begin{center}\Large{{\bf TOPOLOGICAL CLASSIFICATION OF $\Omega$-STABLE FLOWS ON SURFACES BY MEANS OF EFFECTIVELY DISTINGUISHABLE MULTIGRAPHS\footnote{The authors are grateful to the participants of the seminar ``Topological methods in dynamics'' for fruitful discussions. The classification and realisation results (Sections 1--8 without Subsection 5.3) were obtained with the support of the Basic Research Program at the HSE (project 90) in 2017. The algorithmic results (Subsection 5.3, Section 9) were obtained with the support of Russian Foundation for Basic Research 16-31-60008-mol-a-dk and RF President grant MK-4819.2016.1.}}}\end{center}

\vskip 0.25 true cm

\noindent Structurally stable (rough) flows on surfaces have only finitely many singularities and finitely many closed orbits, all of which are hyperbolic, and they have no trajectories joining saddle points. The violation of the last property leads to $\Omega$-stable flows on surfaces, which are not structurally stable. However, in the present paper we prove that a topological classification of such flows is also reduced to a combinatorial problem. Our complete topological invariant is a multigraph, and we present a polynomial-time algorithm for the distinction of such graphs up to an isomorphism. We also present a graph criterion for orientability
of the ambient manifold and a graph-associated formula for its Euler characteristic. Additionally, we give polynomial-time algorithms for checking the orientability and calculating the characteristic.

\vskip 0.5 true cm

\noindent JEL Classification: C69; MSC Classification: 37D05.

\vskip 0.5 true cm

\noindent Keywords: $\Omega$-stable flow, topological invariant, multigraph, four-colour graph, polynomial-time, algorithms.

\section{Introduction}

A traditional method of qualitative studying of a flows dynamics with a finite number of special trajectories on surfaces consists of a splitting the ambient manifold by regions with a predictable trajectories behavior known as {\it cells}. Such a view on continuous dynamical systems rises to the classical work by A. Andronov and L. Pontryagin \cite{AP} published in 1937. In that paper, they considered a system of differential equations 
\begin{equation}\label{odna}
\dot x=v(x),
\end{equation}
where $v(x)$ is a $C^1$-vector field given on a disc bounded by a curve without a contact in the plane and found a roughness criterion for the system (\ref{odna}).

A more general class of flows on the 2-sphere was considered in works by E. Leontovich-Andronova and A. Mayer \cite{LeMa1,LeMa2}, where a topological classification of such flows was also based on splitting by cells, whose types and relative positions (\emph{the Leontovich-Mayer scheme}) completely define a qualitative decomposition of the  phase space of the dynamical system into trajectories. The main difficulty in generalisations of this result to flows on arbitrary orientable surfaces is the possibility of new types of trajectories, namely {\it unclosed recurrent trajectories}. The absence of non-trivial recurrent trajectories for rough flows on the plane and on the sphere is an immediate corollary from the  Poincar\'{e}-Bendixson theory for these surfaces, but this is not so trivial for orientable surfaces of genus $g>0$. At first, it was proved by A.~Mayer ~\cite{Ma} in 1939 for rough flows with no singularities on the 2-torus\footnote{Actually in \cite{Ma} A.~Mayer found the conditions of roughness for cascades (discrete dynamical systems) on the circle and he also got the topological classification for these cascades.} and later by M.~Peixoto~\cite{Pe1, Pe2}  for  structurally stable\footnote{The term ``rough system'' introduced by A. Andronov and L. Pontryagin in \cite{AP} is slightly different from its English counter part ``structurally stable system'' introduced by M. Peixoto in \cite{Pe1, Pe2}, but the sets of rough and structural stable systems coincide.} flows on  surfaces of any genus (see also \cite{PaMe}).

In 1971, M. Peixoto obtained a topological classification of structurally stable flows on arbitrary surfaces \cite{Peix}. As before, he did it by studying all admissible cells and he introduced a combinatorial invariant called {\it a directed graph} generalizing the Leontovich-Mayer scheme. In 1976, D. Neumann and T. O'Brien \cite{NeO}  considered the so-called {\it regular flows} on arbitrary surfaces, such flows have no non-trivial periodic trajectories (i.e. periodic trajectories other than limit cycles) and include the flows above as a particular case. They introduced a complete topological invariant for the regular flows named {\it an orbit complex}, which is a space of flow orbits equipped with some additional information.

In 1998, A. Oshemkov and V. Sharko \cite{OS} introduced a new invariant for Morse-Smale flows on surfaces, namely {\it a three-colour graph}, and described an algorithm to distinct such graphs, which was not, however, polynomial, i.e. its working time is not limited by some polynomial on the length of input information. In the same work they obtained a complete topological classification of Morse-Smale flows on surfaces in terms of atoms and molecules introduced in the work of A. Fomenko \cite{Fom}.

Structurally stable (rough) flows on surfaces have only finitely many singularities and finitely many closed orbits, all of which are hyperbolic, they also have no trajectories joining saddle points. The violation of the last property leads to $\Omega$-stable flows on surfaces, which are not structural stable. However, in the present paper we prove that a topological classification of such flows is also reduced to a combinatorial problem. The complete topological invariant is an equipped graph and we give a polynomial-time algorithm for the distinction of such graphs up to isomorphism. We also present a graph criterion for orientability
of the ambient manifold and a graph-associated formula for its Euler characteristic. Additionally, we give polynomial-time algorithms for checking the orientability and calculating the characteristic.   

\section{The dynamics of an $\Omega$-stable flow}

Let $\phi^t$ be some $\Omega$-stable flow on a closed surface $S$. The non-wandering set $\Omega_{\phi^t}$ of the flow $\phi^t$ consists of a finite number of hyperbolic fixed points and hyperbolic closed trajectories (limit cycles), which are called {\it basic sets}. 

Denote by $G$ a class of $\Omega$-stable flows $\phi^t$ with at least one fixed saddle point or at least one limit cycle\footnote{If flow $\phi^t$ has neither fixed saddle points nor closed trajectories, then its non-wandering set consists of exactly two fixed points: a source and a sink, all such flows are topologically equivalent, that is the reason we exclude such flows from the class $G$.} on a surface $S$. That is the flow class we consider in our work.

\subsection{Fixed points} Let $\phi^t\in G$.

The hyperbolicity of the fixed points is expressed by the following fact. 

\begin{proposition}[\cite{PaMe}, Theorem 5.1 from Chapter 2 and \cite{Clark}, Theorem 7.1 from Chapter 4]\label{LokSopr}
The flow $\phi^t$ in some neighbourhood of a fixed point $q\in\Omega_{\phi^t}$ is topologically equivalent to one of the following linear flows 
\begin{align*}
a^t(x,y)=&\left(2^{-t}x,2^{-t}y\right),\\
b^t(x,y)=&\left(2^{-t}x,2^ty\right),\\ 
c^t(x,y)=&\left(2^tx,2^ty\right).
\end{align*}
\end{proposition}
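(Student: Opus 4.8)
The plan is to reduce the statement to a linear normal-form problem in two steps. First, since $q$ is a hyperbolic fixed point, the Hartman--Grobman theorem guarantees that in a suitable neighbourhood $\phi^t$ is topologically conjugate (hence equivalent) to the linear flow $e^{tA}$ generated by the linearisation $A$ of the vector field at $q$, whose eigenvalues have nonzero real parts. It therefore suffices to classify, up to topological equivalence, the planar linear flows $e^{tA}$ with $A$ hyperbolic. Second, I would sort these by the signs of the real parts of the two eigenvalues of $A$: both negative, mixed sign, or both positive. These three cases correspond respectively to a sink, a saddle and a source, matching the model flows $a^t$ (eigenvalues $-\ln 2, -\ln 2$), $b^t$ (eigenvalues $-\ln 2, \ln 2$) and $c^t$ (eigenvalues $\ln 2, \ln 2$).

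For the sink and source cases I would construct the equivalence using an adapted (Lyapunov) norm. When both eigenvalues have negative real part there is a Euclidean structure in which $r(t)=\|e^{tA}x\|$ is strictly decreasing along every nonzero orbit; hence each orbit meets every circle $\{\|x\|=\rho\}$ transversally and exactly once, so $\mathbb{R}^2\setminus\{0\}$ is swept out as $S^1\times\mathbb{R}$ by the flow. The same holds for the model $a^t$. Choosing the unit circle as a global cross-section in both systems, any homeomorphism of that circle (e.g. the identity) extends uniquely along orbits to a homeomorphism of the punctured plane that carries orbits to orbits preserving orientation, and sends $0$ to $0$. This is a topological equivalence to $a^t$. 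The source case is identical after reversing time, that is, after applying the sink argument to $-A$.

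For the saddle case the stable and unstable subspaces of $A$ are one-dimensional and transverse; I would straighten them to the coordinate axes by a linear change and use them as separatrices dividing a neighbourhood into four open hyperbolic sectors. In each sector the function $\|x\|$ again parametrises orbits by a cross-section, and one matches the four sectors, together with the four incoming and outgoing separatrix rays, to those of the model $b^t$; gluing the sectorwise homeomorphisms along the separatrices, where they are forced to agree, yields the desired equivalence.

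The main obstacle is the sink/source step in the case of a focus, where orbits spiral into $0$ and the naive radial map fails because an orbit wraps around the circle infinitely often. The resolution is precisely the cross-section construction above: topological equivalence, unlike smooth conjugacy, is free to unwind the spiral, and the complete invariant of a hyperbolic linear planar flow up to topological equivalence is merely the dimension of its stable subspace ($2$, $1$ or $0$). Verifying that the extension-along-orbits map is genuinely a homeomorphism (continuity at $0$ and properness of the cross-section parametrisation) is the only point demanding real care; the remaining verifications are routine.
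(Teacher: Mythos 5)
The paper does not prove this proposition at all: it is quoted verbatim as a known local normal-form result, with the proof delegated to \cite{PaMe} (Theorem 5.1, Chapter 2) and \cite{Clark} (Theorem 7.1, Chapter 4). So the comparison here is against the standard textbook argument, and your proposal is essentially that argument: Hartman--Grobman reduces the local problem to the linearisation, and hyperbolic planar linear flows are then classified up to topological equivalence by the dimension of the stable subspace ($2$, $1$, $0$), realised by the models $a^t$, $b^t$, $c^t$. The sink/source construction (adapted Lyapunov norm, unit circle as a global cross-section of the punctured plane, extension of a circle homeomorphism along orbits, continuity checked separately at the fixed point) is exactly the classical proof, and your remark that topological equivalence unwinds a focus is the right explanation of why the focus/node distinction disappears.

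One step, as written, would fail: in the saddle case you claim that ``in each sector the function $\|x\|$ again parametrises orbits by a cross-section.'' It does not. Along a saddle orbit of $b^t$, say $(x_0 2^{-t}, y_0 2^{t})$ with $x_0,y_0>0$, the norm decreases to a positive minimum and then increases, so level circles of $\|x\|$ are not transversal sections of the sector flow and meet some orbits twice or not at all. The standard repair is to use genuine transversal segments instead: in each hyperbolic sector take an entry section (e.g. a sub-arc of $\{|x|=1\}$) and an exit section (a sub-arc of $\{|y|=1\}$); every orbit of the open sector crosses each exactly once, and flow time from the entry section parametrises the sector as (segment)$\times\mathbb{R}$. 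One then prescribes the homeomorphism on the entry sections and on the four separatrices, extends along orbits, and verifies continuity on the separatrices and at the origin, where transition times blow up --- this is the ``fundamental domain'' argument in \cite{PaMe}. With that substitution your proof is complete and coincides with the cited one.
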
 

In the cases $a^t$, $b^t$, $c^t$ the fixed point $q$ is called {\it sink, saddle, source} and has the dimension of the unstable manifold $W^u_q$ equal to $0,1,2$ accordingly. We will denote by $\Omega^0_{\phi^t}$, $\Omega^1_{\phi^t}$, $\Omega^2_{\phi^t}$ the set of all sinks, saddles, sources of $\phi^t$ accordingly.    

It follows from the criterion of the $\Omega$-stability in \cite{Pugh} that the saddle points do not organize {\it cycles}, i.e. collections of points 
\begin{equation*} 
q_1,\dots,q_k,q_{k+1}=q_1 
\end{equation*} 
with a property 
\begin{equation*}
W^s_{q_i}\cap W^u_{q_{i+1}}\neq\emptyset,\;i=1,\dots,k.
\end{equation*}

\subsection{Closed trajectories}

Let $\mathfrak c$ be a closed trajectory of $\phi^t$ and $p\in\mathfrak c$. Let $\Sigma_p$ be a smooth cross-section passing through the point $p$ transversal to trajectories of $\phi^t$ near $p$. Let $V_p\subset\Sigma_p$ be a neighbourhood of $p$ such that for every point $x\in V_p$ the value $\tau_x\in\mathbb R^+$ with properties  $\phi^{\tau_x}(x)\in V_p$ and $\phi^t(x)\notin V_p$ for any $0<t<\tau_x$ is well-defined. Then $\Sigma_p$ is called a {\it Poincar\'e's cross-section} and a map $F_p\colon V_p\to\Sigma_p$ given by the formula $F_p(x)=\phi^{\tau_x}(x),\,x\in V_p$ is called {\it Poincar\'e's map}.

The hyperbolicity of the closed trajectory $\mathfrak c$ is expressed by the following fact.

\begin{proposition}[\cite{PaMe}, Proposition 1.2 from Chapter 3 and Theorem 5.5 from Chapter 2]\label{LokSoprDif} Poincar\'e's map $F_p\colon V_p\to F_p(V_p)$ is a diffeomorphism with a fixed point $p$ in a neighbourhood of which $F_p$ is topologically conjugate to one of the following linear diffeomorphisms 
\begin{align*}
a_{+}(x)=\frac{x}{2},\,&\,a_{-}(x)=-\frac{x}{2},\\
c_{+}(x)=2x,\,&\,c_{-}(x)=-2x.
\end{align*}
\end{proposition}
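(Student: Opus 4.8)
The plan is to reduce the statement to the local classification of hyperbolic fixed points of one-dimensional diffeomorphisms and then to exhibit the four canonical models. First I would verify that $F_p$ is genuinely a diffeomorphism near $p$. The transversality of $\Sigma_p$ to the flow together with the smoothness of $\phi^t$ lets one apply the implicit function theorem to the equation $\phi^{\tau}(x)\in\Sigma_p$, producing a smooth return time $\tau_x$ on a neighbourhood $V_p$; then $F_p(x)=\phi^{\tau_x}(x)$ is a composition of smooth maps, and its inverse is obtained by following the flow backwards, so $F_p$ is a diffeomorphism onto its image. Since $\mathfrak c$ is a closed trajectory through $p$, the first return sends $p$ to itself, so $p$ is a fixed point. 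As $S$ is a surface and $\Sigma_p$ is transversal to the one-dimensional orbit $\mathfrak c$, the cross-section is one-dimensional, so in a local chart I may regard $F_p$ as a germ of a diffeomorphism of $(\mathbb R,0)$ and write $\lambda:=F_p'(0)$ for its multiplier.

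Next I would invoke the hyperbolicity of $\mathfrak c$. By definition $DF_p(p)$ has no eigenvalue on the unit circle, which in dimension one means simply $|\lambda|\neq 1$. The one-dimensional Hartman--Grobman theorem, asserting that a hyperbolic fixed point of a $C^1$ map is locally topologically conjugate to its linearisation $x\mapsto\lambda x$, then reduces the problem to classifying the linear maps $x\mapsto\lambda x$ with $|\lambda|\neq 1$ up to topological conjugacy. Two topological invariants survive: the sign of $\lambda$, which records whether $F_p$ preserves or reverses the local orientation of $\Sigma_p$ (geometrically, whether a neighbourhood of $\mathfrak c$ is an annulus or a M\"obius band), and the comparison of $|\lambda|$ with $1$, which records whether $p$ is attracting or repelling. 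These split into the four cases $0<\lambda<1$, $-1<\lambda<0$, $\lambda>1$, $\lambda<-1$, which I claim are conjugate respectively to $a_{+}$, $a_{-}$, $c_{+}$, $c_{-}$.

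The core of the argument, and the step I expect to be the main obstacle, is to show that the magnitude $|\lambda|$ is \emph{not} a topological invariant in dimension one, so that every attracting orientation-preserving germ is conjugate to the single model $a_{+}(x)=x/2$, and similarly for the other three classes. For the orientation-preserving contracting case I would build the conjugacy $h$ by the fundamental-domain method: on the positive half-line choose a point $x_0>0$ close to $0$; the half-open interval $(F_p(x_0),x_0]$ is a fundamental domain for the action of $F_p$, every positive forward orbit meeting it exactly once, and likewise $(a_{+}(y_0),y_0]$ is a fundamental domain for $a_{+}$. Picking any homeomorphism between these two fundamental domains and extending it to the whole positive half-line by the relation $h\circ F_p=a_{+}\circ h$ yields a well-defined homeomorphism conjugating the two maps near $0$; doing the same on the negative half-line and setting $h(0)=0$, with continuity at $0$ guaranteed by the contraction, completes this case. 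The repelling case follows by passing to inverses, and the two orientation-reversing cases are treated by first constructing the conjugacy for the orientation-preserving square $F_p^2$ on one side and then transporting it across $0$ by $F_p$, taking care that the resulting $h$ is consistent where the two halves meet. Assembling the four cases produces the claimed local models.
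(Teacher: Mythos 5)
The paper does not actually prove this proposition: it is imported verbatim from Palis--de Melo \cite{PaMe}, and the two references in its header correspond exactly to the two halves of your argument (Proposition 1.2 of Chapter 3 there gives that the Poincar\'e return map is a well-defined local diffeomorphism; Theorem 5.5 of Chapter 2 gives the local topological classification of hyperbolic fixed points). Your proposal is a correct, self-contained reconstruction of the standard argument behind those citations: the implicit-function-theorem construction of the smooth return time, the observation that hyperbolicity in dimension one means precisely $|\lambda|\neq 1$ for the multiplier $\lambda=F_p'(p)$, the identification of the two surviving invariants (sign of $\lambda$, i.e.\ annulus versus M\"obius band, and attraction versus repulsion), and the fundamental-domain construction of the conjugacy, with the orientation-reversing cases correctly handled through $F_p^2$ and transport across the fixed point. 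One small economy is available: once you run the fundamental-domain argument directly on the germ of $F_p$ (which needs only that $F_p$ is monotone and that orbits on each side converge to, or escape from, $p$ monotonically --- both consequences of $|\lambda|\neq 1$), the appeal to the one-dimensional Hartman--Grobman theorem is redundant; conversely, if you keep Hartman--Grobman, you need only conjugate the linear models to the four normal forms, which can be done explicitly by $h(x)=\mathrm{sign}(x)\,|x|^{\alpha}$ with $\alpha>0$ chosen so that $|\lambda|^{\alpha}=\tfrac12$ (respectively $2$). Either route is sound; what your write-up buys relative to the paper's treatment is self-containedness, at the cost of reproving a textbook fact the authors deliberately delegate to the literature.
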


In the cases $a_\pm,\,c_\pm$ the closed trajectory $\mathfrak c$ is called {\it attractive, repelling limit cycle} accordingly. Denote by $\Omega^3_{\phi^t}$ the set of all limit cycles of $\phi^t$.

In any case the limit cycle $\mathfrak c$ has a neighbourhood $U_\mathfrak c$, avoiding other limit cycles and fixed points of $\phi^t$ and with the transversal to the trajectories of $\phi^t$ boundary $R_\mathfrak c$.  The neighbourhood $U_\mathfrak c$ is homeomorphic to the annulus or the M\"obius band (see Fig. \ref{ListMobiusa}) in the cases $a_+,\,c_+$ or $a_-,\,c_-$ accordingly and can be constructed the following way.

\begin{figure}[htb]
\centerline{\includegraphics [width=7 cm]{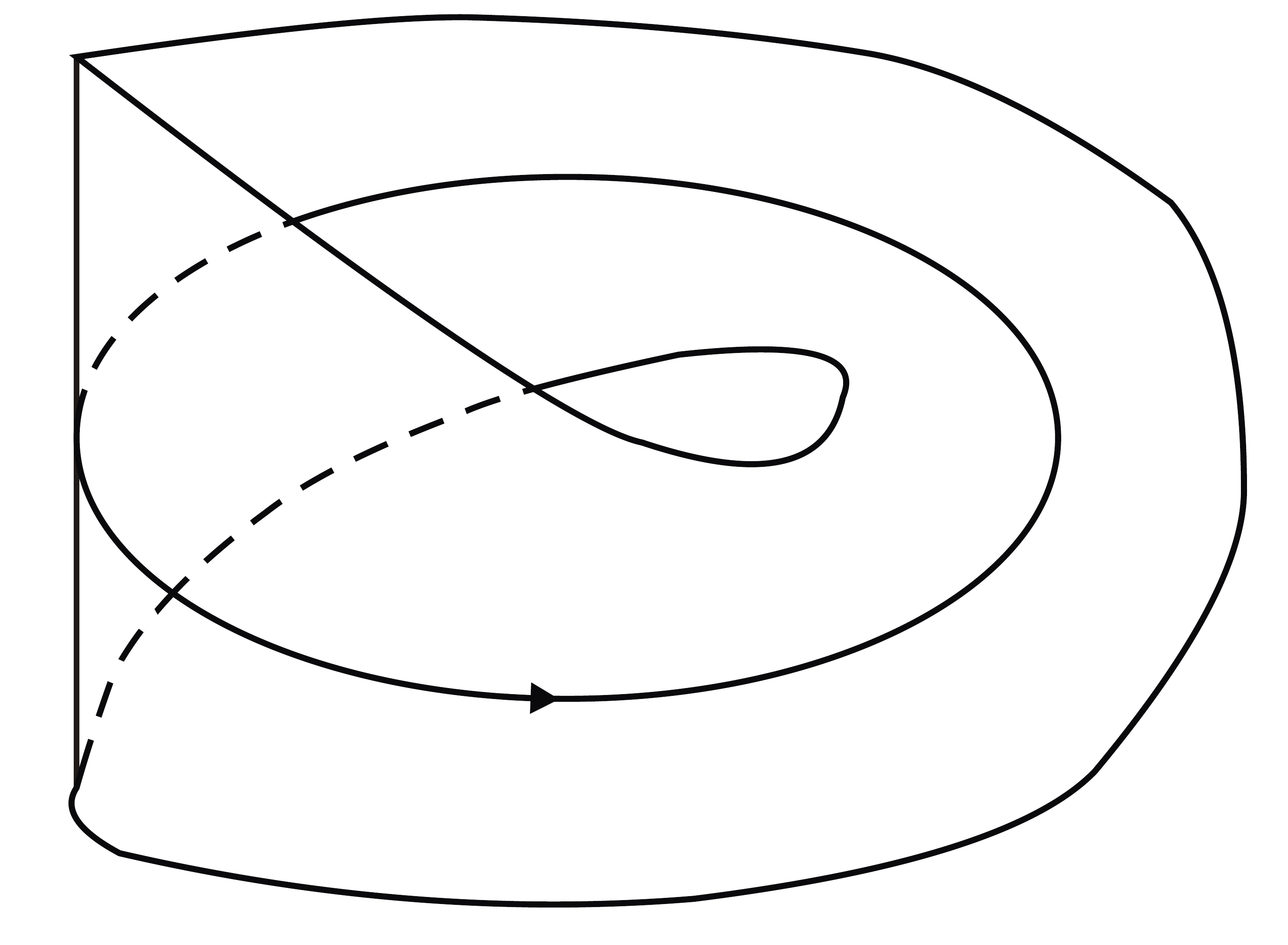}} \caption{The case in which $U_\mathfrak c$ is homeomorphic to the M\"obius band}\label{ListMobiusa}
\end{figure}

For every points $a,b\in V_p$ let us denote by $m_{a,b}$ the segment of $V_p$ bounded by the points $a,b$ and by  $\mu_{a,b}$ the length of this segment. In the cases $a_+,\,c_+$ let us choose  points $x^*_1,x^*_2\in(V_p\setminus \{p\})$ on different connected components of $V_p\setminus \{p\}$. Then $R_\mathfrak{c}$ is a union of two circles 
\begin{equation*}
\left\{\phi^{\frac{\mu_{x,F_p(x^*_1)}}{\mu_{x_1^*,F_p(x_1^*)}}\tau_{x}}(x): x\in m_{x_1^*,F_p(x_1^*)}\right\}\,\,\textrm{ and }\,\,\left\{\phi^{\frac{\mu_{x,F_p(x^*_2)}}{\mu_{x_2^*,F_p(x_2^*)}}\tau_{x}}(x): x\in m_{x_2^*,F_p(x_2^*)}\right\}.
\end{equation*} 
In the cases $a_-,\,c_-$ let us choose a point $x^*\in(V_p\setminus \{p\})$. Then 
\begin{equation*}
R_\mathfrak c=\left\{\phi^{\frac{\mu_{x,F^2_p(x^*)}}{\mu_{x^*,F^2_p(x^*)}}2\tau_{x}}(x): x\in m_{x^*,F^2_p(x^*)}\right\}.
\end{equation*}

A moving of $\Sigma_p$ along the trajectories in the positive time gives a {\it consistent with $\mathfrak c$ orientation on $R_\mathfrak c$}. Thus, in further we will assume that $R_\mathfrak c$ is oriented consistently with $\mathfrak c$. 

\section{The directed graph for a flow $\phi^t\in G$}

Recall that {\it a graph} $\Gamma$ is an ordered pair $(B,E)$ such that $B$ is a finite non-empty set of {\it vertices}, $E$ is a set of pairs of the vertices called {\it edges}. Besides, if $E$ is a multiset then $\Gamma$ is called {\it multigraph}. Recall that a {\it multiset} is a set with the opportunity of multiple inclusion of its elements. Everywhere below we will call a multigraph simply as a {\it graph}. 

If a graph includes an edge $e=(a,b)$, then both vertices $a$ and $b$ are called {\it incident} to the edge $e$. The vertices $a$ and $b$ \emph{are connected by} $e$. A graph is called {\it directed} if every its edge is an ordered pair of vertices. A finite sequence 
\begin{equation*}\tau=(b_0, (b_0,b_1), b_1,\dots, b_{i-1}, (b_{i-1},b_i), b_i,\dots, b_{k-1}, (b_{k-1}, b_k), b_k)
\end{equation*}
of vertices and edges of a graph is called {\it a path}, the number $k$ is called {\it the length} of the path and it is equal to the number of edges of the path. The path $\tau$ is called {\it simple} if it contains only pairwise disjoint edges.  The simple path $\tau$ is called {\it a cycle} if $b_0=b_k$. A graph is called {\it connected} if every two its vertices can be connected by a path.

Let $\mathcal R=\bigcup\limits_{\mathfrak c\in\Omega^3_{\phi^t}}R_\mathfrak c$. We call $\mathcal R$ a {\it cutting set} and the connected components of $\mathcal R$ {\it cutting circles}. Let $\hat{S}=S\backslash\mathcal R$. We call {\it an elementary region} a connected component of the set $\hat{S}$. The elementary regions, obviously, can be of the following pairwise disjoint types with respect to information about basic sets of $\phi^t$ in the regions:

1) a region of the type $\mathcal L$ contains exactly one limit cycle;

2) a region of the type $\mathcal A$  contains exactly one source or exactly one sink;

3) a region of the type $\mathcal M$ contains at least one saddle point;

4) a region of the type $\mathcal E$ does not contain elements of basic sets.

\begin{definition} A directed  graph $\Upsilon_{\phi^t}$ is said to be a \emph{graph of the flow} $\phi^t\in G$ (see Fig. \ref{OmegaUstIEgoNeosnIpsilon}) if

(1) the vertices of $\Upsilon_{\phi^t}$ bijectively correspond to the elementary regions of $\phi^t$;

(2) every directed edge of $\Upsilon_{\phi^t}$, which joins a vertex $a$ with a vertex $b$, corresponds to the cutting circle $R$, which is a common boundary of the regions $A$ and $B$ corresponding to $a$ and $b$, such that any trajectory of $\phi^t$ passing $R$ goes from $A$ to $B$ by increasing the time.
\end{definition}

\begin{figure}[htb]
\centerline{\includegraphics [width=12 cm]{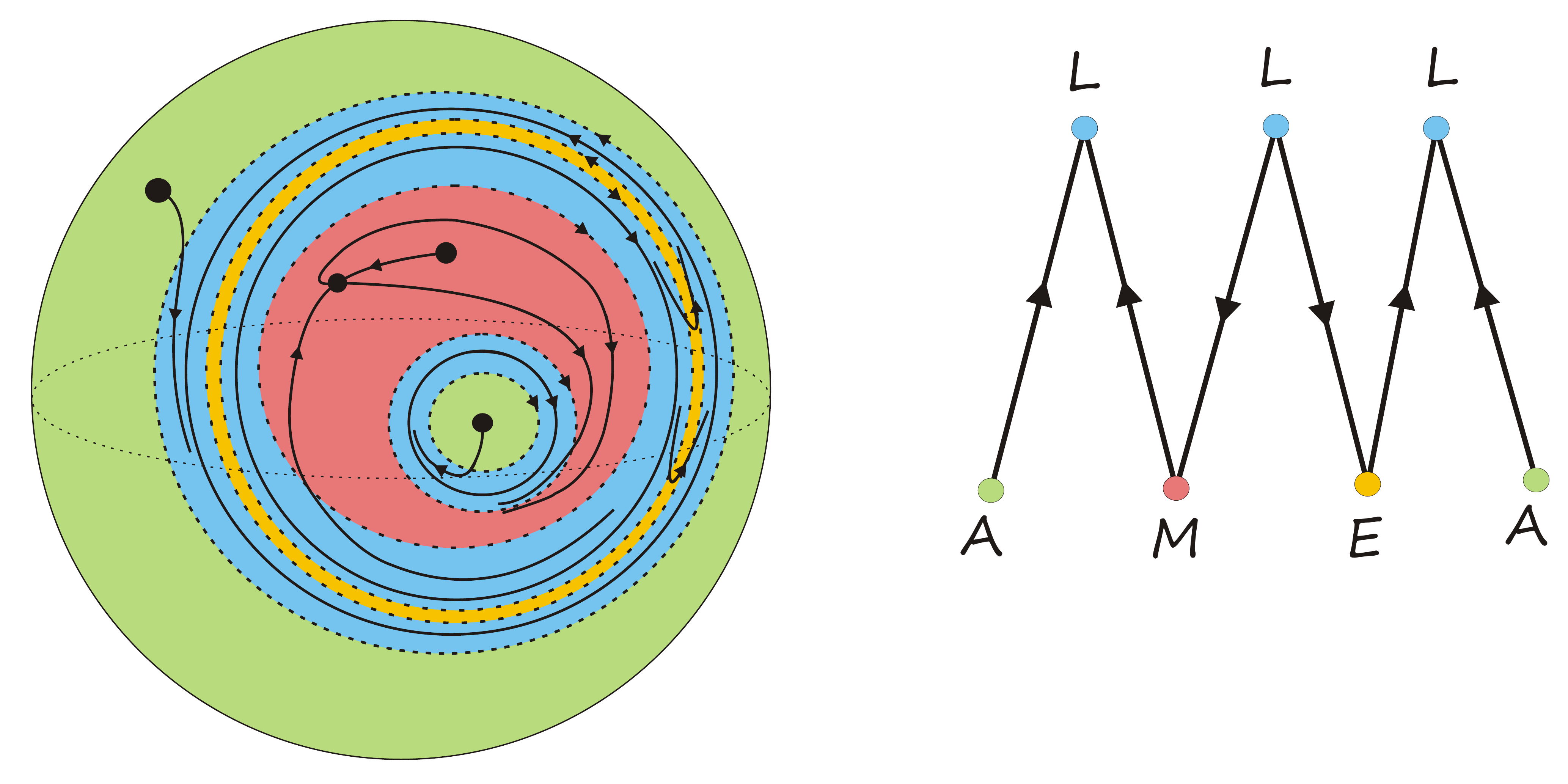}} \caption{$\phi^t$ and $\Upsilon_{\phi^t}$}\label{OmegaUstIEgoNeosnIpsilon}
\end{figure}

We will call a $\mathcal L$-, $\mathcal A$-, $\mathcal E$- or $\mathcal M$\emph{-vertex} a vertex of $\Upsilon_{\phi^t}$, which corresponds to a $\mathcal L$-, $\mathcal A$-, $\mathcal E$- or $\mathcal M$-region accordingly.

The following proposition immediately follows from the dynamics of the flow $\phi^t$ and a structure of cutting set.

\begin{proposition}\label{SvOrGr} Let $\Upsilon_{\phi^t}$ be the directed graph of a flow $\phi^t\in G$, then: 

1) every $\mathcal M$-vertex can be connected only with $\mathcal L$-vertices, furthermore, with every vertex by a single edge;

2) every $\mathcal E$-vertex can be incident only to two edges that connect this vertex with two different $\mathcal L$-vertices, and one of these edges enters to the $\mathcal E$-vertex, another one exits;

3) every $\mathcal A$-vertex can be connected only with a $\mathcal L$-vertex, furthermore, by a single edge;

4) every $\mathcal L$-vertex has degree (the number of incident edges) 1 or 2, and if its degree is 2, then both edges either enter the vertex or exit.
\end{proposition}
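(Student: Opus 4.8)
The plan is to treat the four region types one by one, with two tools doing most of the work: the Poincar\'e--Hopf index theorem for a compact surface-with-boundary on which the flow is transversal to the boundary (for surfaces every boundary component is a circle, so the index sum always equals the Euler characteristic of the region), and the absence of nontrivial recurrence inside the regions, which follows from $\Omega$-stability because the whole non-wandering set is exhausted by the basic sets. I would first record one structural observation used everywhere: by construction each component of a cutting circle $R_\mathfrak c$ bounds the neighbourhood $U_\mathfrak c$ on one of its two sides, and since distinct $U_\mathfrak c$ have disjoint closures, its other side is a region of another type. Hence the $\mathcal L$-regions are exactly the $U_\mathfrak c$, and every edge of $\Upsilon_{\phi^t}$ joins an $\mathcal L$-vertex to a non-$\mathcal L$-vertex; the graph is bipartite, and this already gives the ``only with $\mathcal L$-vertices'' half of 1), 2) and 3).

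Claim 4) is the local picture near a limit cycle. The $\mathcal L$-region carrying $\mathfrak c$ is precisely $U_\mathfrak c$, which by the construction following Proposition \ref{LokSoprDif} is an annulus in the cases $a_+, c_+$ and a M\"obius band in the cases $a_-, c_-$; its boundary $R_\mathfrak c$ then has two components or one, giving degree $2$ or $1$. The flow crosses $R_\mathfrak c$ inward for an attracting cycle and outward for a repelling one, and on an annulus both components inherit the same direction, so when the degree is $2$ the two incident edges either both enter the $\mathcal L$-vertex ($a_+$) or both exit it ($c_+$).

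For 3) I would pass to the closure $\overline A$ of an $\mathcal A$-region: it contains a single basic set, the node $q$ of index $+1$, no recurrence, and the flow is transversal to $\partial\overline A$. Poincar\'e--Hopf gives $\chi(\overline A)=+1$, so $\overline A$ is a disc with one boundary circle; this forces degree $1$, a single edge and a single neighbouring $\mathcal L$-vertex, and the orientation of the edge is fixed by whether $q$ is a sink or a source. For 2) an $\mathcal E$-region has no fixed points and no recurrence, so every trajectory crosses it from an incoming to an outgoing boundary circle in finite time; following the flow realises $\overline E$ as a product (incoming boundary)$\times[0,1]$, which together with connectedness makes it an annulus carrying exactly one incoming and one outgoing circle, i.e.\ degree $2$ with one entering and one exiting edge. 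The two neighbouring $\mathcal L$-vertices must be distinct: if they coincided, the single annulus $U_\mathfrak c$ would be repelling along the incoming circle and attracting along the outgoing one, contradicting the uniform direction established in 4).

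The remaining content of 1), that an $\mathcal M$-vertex is joined to each neighbouring $\mathcal L$-vertex by a single edge, is the step I expect to be the main obstacle. In contrast with the $\mathcal A$- and $\mathcal E$-cases, an $\mathcal M$-region may have arbitrarily negative Euler characteristic and many boundary circles, so index theory alone does not pin down its topology; the delicate point is whether the two components of one annular $R_\mathfrak c$ can simultaneously bound the same $\mathcal M$-region and thereby create a double edge. I would attack this by analysing the common boundary circles of a fixed $\mathcal M$-region and a fixed $U_\mathfrak c$ directly, combining the consistent orientation of $R_\mathfrak c$ with the saddle dynamics and the no-cycle condition on saddles inside the region; carefully tracking how the components of a single cutting set attach to an $\mathcal M$-region is where I expect the argument to demand the most care.
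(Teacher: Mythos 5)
Your arguments for items 2)--4) and for the ``only with $\mathcal L$-vertices'' half of item 1) are correct, and they already go beyond the paper, which gives no proof at all: Proposition \ref{SvOrGr} is introduced there only with the sentence that it ``immediately follows from the dynamics of the flow $\phi^t$ and a structure of cutting set.'' Your two tools are sound: every cutting circle is a boundary component of exactly one $U_{\mathfrak c}$, so each edge has exactly one $\mathcal L$-endpoint; and since boundary components of a compact surface are circles (of zero Euler characteristic), the Poincar\'e--Hopf index sum equals the Euler characteristic even with a mixed inward/outward transversal boundary, giving $\chi=1$ for the closure of an $\mathcal A$-region (hence a disc, hence a single edge), while the absence of basic sets inside an $\mathcal E$-region, combined with the fact that all $\omega$- and $\alpha$-limit sets are basic sets, yields the product structure and the two distinct $\mathcal L$-neighbours.

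The step you single out as the main obstacle --- that an $\mathcal M$-vertex and an $\mathcal L$-vertex are joined by at most one edge --- is indeed a genuine gap, and no argument can close it, because that claim is false. On $T^2=\mathbb R^2/\mathbb Z^2$ take $\dot x=-\sin(2\pi x)$, with $\dot y=1$ near $\{x=0\}$ and $\dot y=\sin(2\pi y)$ near $\{x=1/2\}$, smoothly interpolated in between. The non-wandering set is the hyperbolic attracting limit cycle $\mathfrak c=\{x=0\}$, a hyperbolic source at $(1/2,0)$ and a hyperbolic saddle at $(1/2,1/2)$; there are no saddle connections, so the flow is Morse--Smale, hence $\Omega$-stable, and it belongs to $G$. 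Here $R_{\mathfrak c}$ consists of two circles, one on each side of $\mathfrak c$, and the complement of $U_{\mathfrak c}$ is a single annulus containing both the source and the saddle, i.e.\ one $\mathcal M$-region adjacent to the unique $\mathcal L$-region along both cutting circles. Thus $\Upsilon_{\phi^t}$ has two vertices joined by two parallel $(\mathcal M,\mathcal L)$-edges. The no-cycle condition you hoped to exploit cannot exclude this: the chain source $\to$ saddle $\to$ cycle contains no cycle of basic sets. So the defect lies in the statement rather than in your strategy: item 1) must be weakened to allow the two boundary circles of one annular $U_{\mathfrak c}$ to abut the same $\mathcal M$-region, and the same example contradicts Lemma \ref{Dopust}; by contrast, items 2)--4), which you did prove, are unaffected, as is the classification machinery, which equips each vertex and each edge (i.e.\ each cutting circle) separately and never uses the single-edge property.
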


The existence of an isomorphism of the directed graphs for  topologically equivalent $\Omega$-stable flows from $G$ is a necessary condition. To make the directed graph a complete topological invariant for the class $G$, below we equip the graph $\Upsilon_{\phi^t}$ by additional information.

\section{Equipment of the directed graph}
In this section, we describe how to assign some additional information to vertices and edges of the directed graph of a flow from $G$.

\subsection{$\mathcal A$-vertex} The flows in $\mathcal A$-regions can belong to only the two equivalence classes: a source pool and a sink pool, which we can distinguish by directions of edges incident to $\mathcal A$-vertices.

\subsection{$\mathcal L$-vertex} The flows in $\mathcal L$-regions can belong to only the four equivalence classes: an annulus with a stable limit cycle,  an annulus with an unstable one, the M\"obius band with a stable one, the M\"obius band with an unstable one, which we can distinguish by directions of edges and by quantities of edges incident to $\mathcal L$-vertices.

\subsection{$\mathcal E$-vertex} The flows in $\mathcal E$-regions can belong to only the two equivalence classes corresponding to the consistent and the inconsistent orientation of connecting components of $\mathcal E$'s boundary. However, a structure of an $\mathcal E$-region cannot be determined by the directed graph, therefore, we will attribute the {\it weight} to the vertex corresponding to an $\mathcal E$-region. The weight is ``$+$'' in the consistent case and ``$-$'' in the inconsistent one.

\begin{figure}[htb]
\centerline{\includegraphics [width=11 cm] {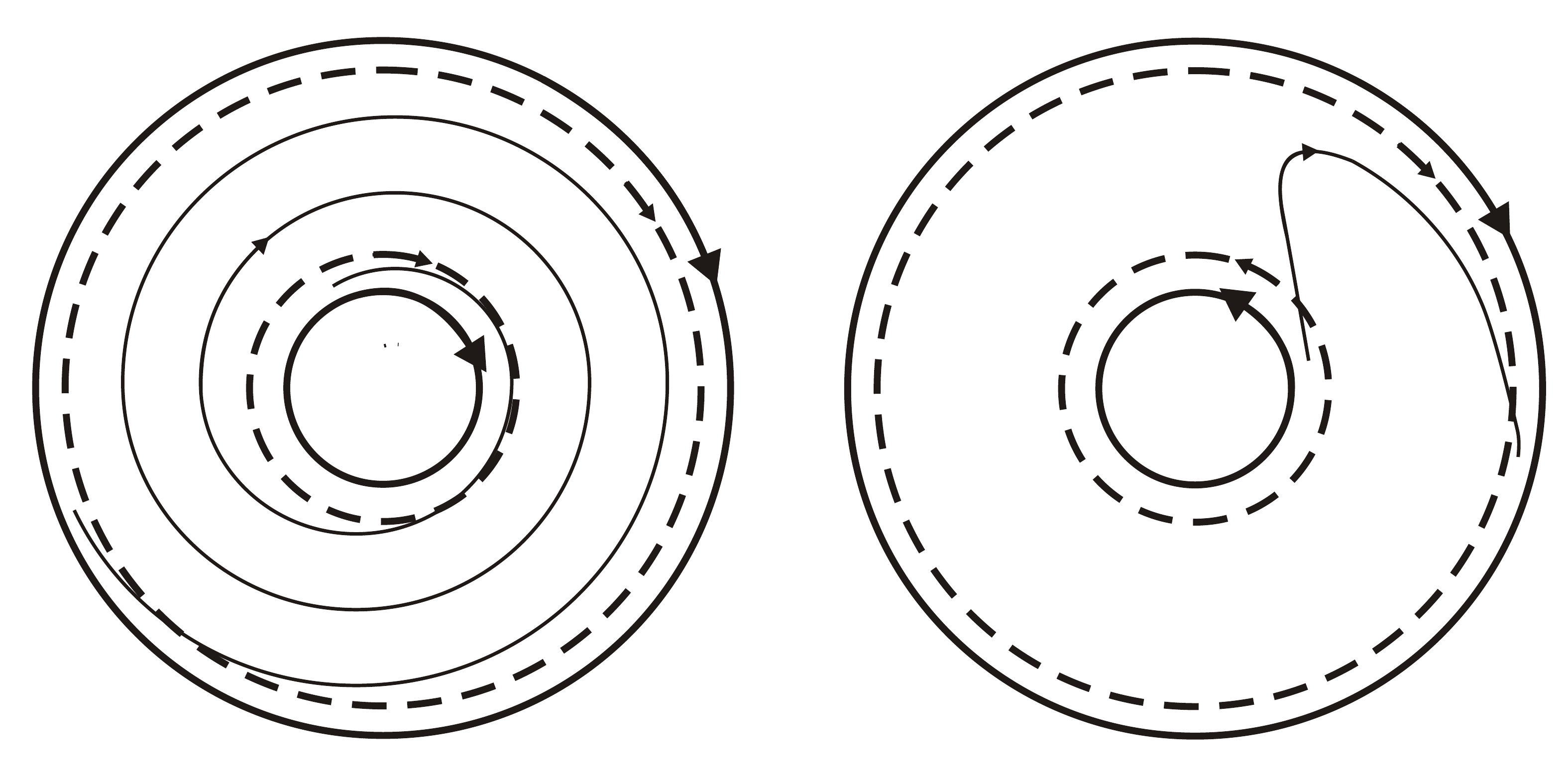}}
\caption{The cases of the consistent (leftward) and the inconsistent (rightward) orientation of boundary's connecting component of some $\mathcal E$-region.}\label{EDvaSl}
\end{figure}

\subsection{$\mathcal M$-vertex}\label{Mvertex} The flows in $\mathcal M$-regions cannot be determined by the directed graph. Then we will equip vertices corresponding to them by {\it four-colour graphs} for a description of the dynamics of the flow in the regions. In more details.

All results about flows from $G$ without periodic trajectories are given and proved in our paper \cite{KruMaPoMS} but we give it here for completness.

Let us consider some $\mathcal M$-region that is either a 2-manifold with a boundary or a closed surface. In the first case let us attach the union $D$ of disjoint 2-disks to the boundary to get a closed surface $M$, in the second case we also denote the closed surface by $M$ and will suppose that $D=\emptyset$. Let us extend  $\phi^t|_{\mathcal M}$ up to an $\Omega$-stable flow $f^t\colon M\to M$ assuming that $f^t$ coincides with $\phi^t$ out of $D$ and $\Omega_{f^t}$ has exactly one fixed point (a sink or a source) in each connected component of $D$.

Let $\Omega^0_{f^t},\,\Omega^1_{f^t},\,\Omega^2_{f^t}$ be the sets of all sources, saddle points and sinks of $f^t$ accordingly. By the definition of the region $\mathcal M$ the flow $f^t$ has at least one saddle point. Let 
\begin{equation*}
\tilde M=M\setminus (\Omega^0_{f^t}\cup W^s_{\Omega^1_{f^t}}\cup W^u_{\Omega^1_{f^t}}\cup \Omega^2_{f^t}).
\end{equation*} 
A connected component of $\tilde M$ is called {\it a cell}. 

\begin{lemma}\label{1ist1st}
Every cell $J$ of the flow $f^t$ contains a single sink $\omega$ and a single source $\alpha$ in its boundary, and the whole cell is the union of trajectories going from $\alpha$ to $\omega$.
\end{lemma}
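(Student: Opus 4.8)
The plan is to read off the structure of a cell from the forward and backward limit behaviour of its points. Since the $\mathcal M$-region carries no limit cycle and the extension $f^t$ only adds sink/source fixed points inside $D$, the non-wandering set $\Omega_{f^t}$ is a finite set of hyperbolic fixed points; in particular it is totally disconnected and contains no closed orbit, and by the $\Omega$-stability criterion there are no saddle cycles. The set removed from $M$ to form $\tilde M$ is $f^t$-invariant, and each cell $J$ (a connected component of $\tilde M$) is therefore an invariant union of full trajectories that contains no fixed point: sinks and sources are deleted as $\Omega^2_{f^t}$ and $\Omega^0_{f^t}$, while every saddle $q$ lies in $W^s_q\cap W^u_q\subset W^s_{\Omega^1_{f^t}}\cap W^u_{\Omega^1_{f^t}}$ and is deleted as well.

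Fix $x\in J$ and consider its $\omega$-limit set. On the compact surface $M$ the set $\omega(x)$ is non-empty, compact, connected, invariant and contained in $\Omega_{f^t}$; being a connected subset of a finite set it is a single fixed point. It cannot be a source, which repels every nearby non-fixed trajectory in forward time, and it cannot be a saddle $q$, since that would force $x\in W^s_q\subset W^s_{\Omega^1_{f^t}}$, contrary to $x\in\tilde M$. Hence $\omega(x)$ is a sink $\omega$. Applying the same argument to the time-reversed flow shows that $\alpha(x)$ is a source $\alpha$. Consequently the trajectory of $x$ runs from $\alpha$ to $\omega$, and, since these fixed points are limits of the orbit but are not themselves in $\tilde M$, they lie in $\overline J\setminus J$.

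It remains to check that $\omega$ and $\alpha$ are independent of $x\in J$. The basin of a hyperbolic sink $\omega$ equals $\{x:\omega(x)=\omega\}$ and is open, and dually the basin $W^u_\alpha$ of a hyperbolic source is open; hence the maps $x\mapsto\omega(x)$ and $x\mapsto\alpha(x)$ are locally constant on $\tilde M$ and therefore constant on the connected set $J$. Thus $J$ lies in a single pair $W^s_\omega\cap W^u_\alpha$, which exhibits $J$ as the union of trajectories going from the unique source $\alpha$ to the unique sink $\omega$, both lying in $\partial J$; this is exactly the assertion.

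I expect the single delicate point to be the identification of $\omega(x)$ (and $\alpha(x)$) with one fixed point of the correct type: this is precisely where hyperbolicity and $\Omega$-stability enter, since they guarantee that $\Omega_{f^t}$ is finite and that neither closed orbits nor saddle polycycles --- the only other candidates for a limit set on a surface --- can occur. The remaining ingredients are the standard facts that limit sets of points on a compact manifold are connected and that hyperbolic basins are open.
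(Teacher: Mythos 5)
Your proof is correct, and its skeleton matches the paper's: both arguments establish that a cell $J$ lies in the unstable basin $W^u_\alpha$ of a single source and in the stable basin $W^s_\omega$ of a single sink, and then read off the conclusion. The genuine difference is where the pointwise fact ``every point of $\tilde M$ tends forward to a sink and backward to a source'' comes from. The paper gets it in one stroke from Proposition \ref{Wupodmn} (the decomposition $M=\bigcup_p W^u_p=\bigcup_p W^s_p$, cited from the literature): writing $\tilde M$ as the union of punctured source basins minus the stable saddle separatrices, each connected component automatically sits inside a single $W^u_\alpha$, and symmetrically inside a single $W^s_\omega$. You instead re-derive this pointwise fact from first principles: $\omega(x)$ is non-empty, compact, connected and contained in the non-wandering set, which here is a finite set of hyperbolic fixed points, hence a single fixed point; hyperbolicity excludes a saddle (that would put $x\in W^s_{\Omega^1_{f^t}}$, which is removed from $\tilde M$) and a source (that would force $x$ to be fixed), so the limit is a sink; openness of basins then makes $x\mapsto\omega(x)$ and $x\mapsto\alpha(x)$ locally constant, hence constant on the connected cell. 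In effect you have inlined the standard proof of the proposition the paper cites, which buys self-containedness at the cost of length; your aside about saddle polycycles is actually superfluous, since once $\Omega_{f^t}$ is known to be finite and limit sets lie in the non-wandering set, no other limit-set candidates need to be discussed. Both your argument and the paper's leave implicit the same small point, namely that no \emph{other} sink or source can lie on $\partial J$; this is immediate, since a sink $\omega'\in cl(J)$ would force points of $J$ into $W^s_{\omega'}$, contradicting constancy of the limit on $J$.
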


Let us choose a trajectory $\theta_J$ in the cell $J$, we will call it a {\it $t$-curve}. Let 
\begin{equation*}
\mathcal T=\bigcup\limits_{J\subset\tilde S}\theta_J,\,\,\bar M=\tilde{M}\backslash\mathcal T.
\end{equation*}

\begin{figure}[htb]
\centerline{\includegraphics [width=7.5 cm] {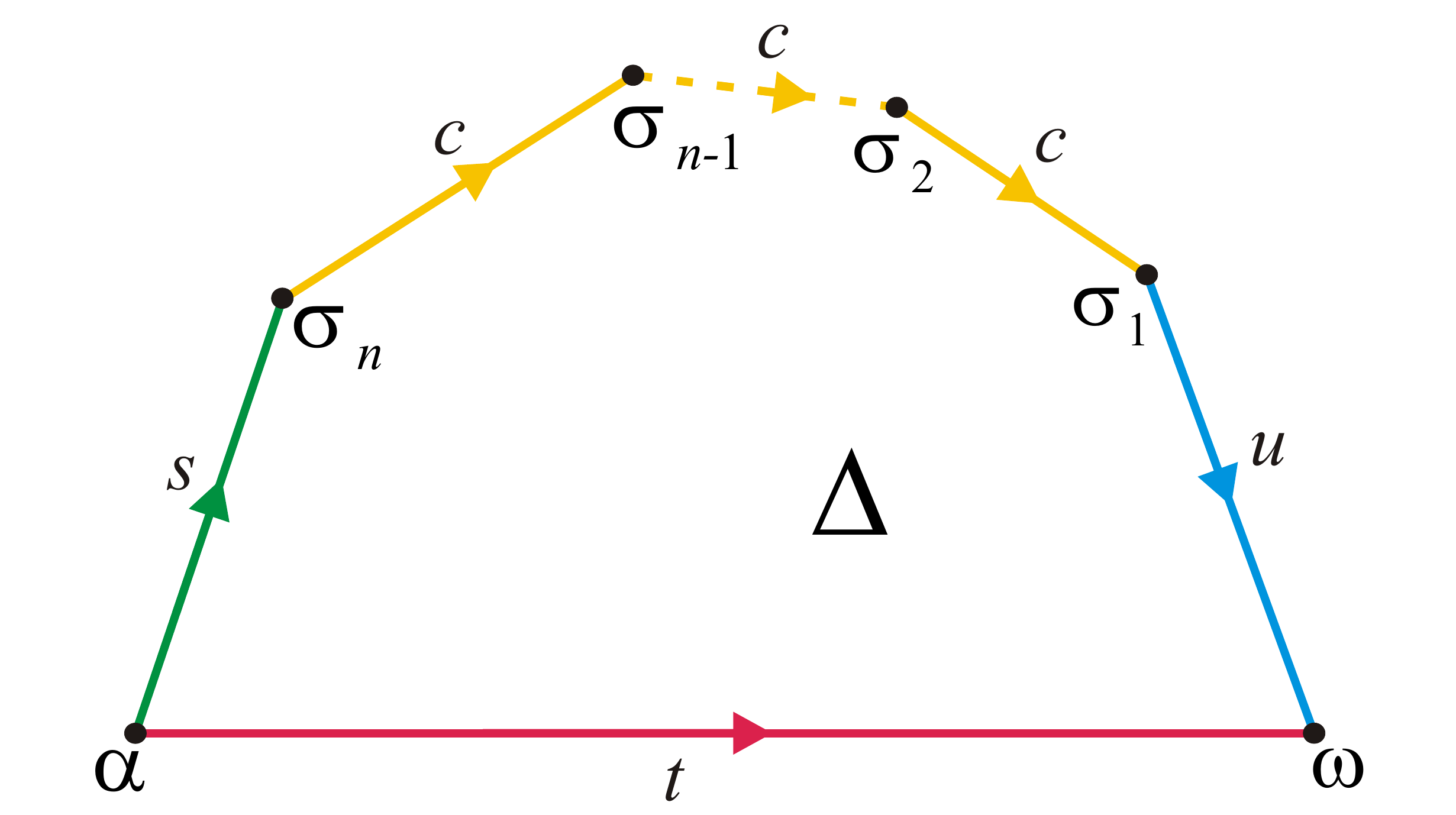}}
\caption{A polygonal region}\label{MnogObl}
\end{figure}

Let us call a {\it $c$-curve} a separatrix connecting saddle points (from the word ``connection''), a {\it $u$-curve} an unstable saddle separatrix with a sink in its closure, a {\it $s$-curve} a stable saddle separatrix with a source in its closure. We will call a {\it polygonal region $\Delta$} the connecting component of $\bar M$.

\begin{lemma}\label{vidmnob}
Every polygonal region $\Delta$ is homeomorphic to an open disk and its boundary consists of an unique $t$-curve, an unique $u$-curve, an unique $s$-curve, and a finite (may be empty) set of $c$-curves (see Fig. \ref{MnogObl}).
\end{lemma}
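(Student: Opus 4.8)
The plan is to first pin down the global topology of a single cell and then to determine the effect of deleting its $t$-curve. By Lemma \ref{1ist1st} a cell $J$ is swept out by trajectories all running from one source $\alpha$ to one sink $\omega$ on $\partial J$. By Proposition \ref{LokSopr} the flow is a radial expansion near $\alpha$ and a radial contraction near $\omega$, so the trajectories of $J$ leave $\alpha$ inside an angular sector and enter $\omega$ inside an angular sector. I would parametrise the trajectories of $J$ by the open interval $I$ of directions they make at $\alpha$ and transport this interval along the flow using a transversal cross-section; this produces a homeomorphism of $J$ onto a strip $I\times\mathbb{R}$, and adjoining the two limit points $\alpha$ and $\omega$ compactifies it to a closed bigon. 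Thus $J$ is an open $2$-disk and its closure $\bar J$ is a closed disk carrying two distinguished boundary vertices $\alpha$ and $\omega$.

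Next I would describe $\partial\bar J$. Since $J$ is a connected component of $\tilde M$, its boundary lies in the deleted set $\Omega^0_{f^t}\cup W^s_{\Omega^1_{f^t}}\cup W^u_{\Omega^1_{f^t}}\cup\Omega^2_{f^t}$, so the two boundary arcs joining $\alpha$ to $\omega$ are concatenations of saddle separatrices whose vertices are saddles together with $\alpha$ and $\omega$. At each such saddle the local hyperbolic picture of Proposition \ref{LokSopr} forces the two separatrices bounding the cell to be one stable and one unstable, so along an arc the type alternates: it begins with the stable separatrix issuing from the source $\alpha$ (an $s$-curve), continues through a finite chain of saddle-to-saddle separatrices ($c$-curves), and terminates with the unstable separatrix absorbed by the sink $\omega$ (a $u$-curve). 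Finiteness of this chain, and the fact that it meets each saddle at most once, follow from the absence of cycles among the saddles guaranteed by the $\Omega$-stability criterion recalled in Section 2. Because $\alpha$ and $\omega$ each appear once on $\partial\bar J$ (Lemma \ref{1ist1st}), each of the two arcs carries exactly one $s$-curve and exactly one $u$-curve.

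It remains to cut. The $t$-curve $\theta_J$ is a single trajectory of $J$, hence a proper open arc of the disk $J$ whose closure joins the boundary vertices $\alpha$ and $\omega$ through the interior; deleting it therefore separates $J$ into exactly two open disks. Each resulting polygonal region $\Delta$ is bounded by one copy of $\theta_J$ together with one of the two boundary arcs of $\bar J$, so $\partial\Delta$ consists of a unique $t$-curve, a unique $s$-curve, a unique $u$-curve and the (possibly empty) finite family of $c$-curves of that arc, which is exactly the asserted description. I expect the genuine difficulty to lie entirely in the first step, namely in constructing the global product structure and verifying that $\bar J$ is honestly a disk on whose boundary $\alpha$ and $\omega$ each occur once --- i.e.\ that the cell does not wrap around and pinch at the source or the sink --- whereas the combinatorics of the boundary and the cutting argument are then routine.
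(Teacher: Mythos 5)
Your route is close in spirit to the paper's but is organized differently, and it founders on exactly the step you flag at the end. The claim that the cell $J$ is an open disk whose ideal boundary is a bigon on which $\alpha$ and $\omega$ each occur \emph{once} is left unproven, and your sketch for it is circular: parametrising the trajectories of $J$ ``by the open interval $I$ of directions they make at $\alpha$'' presupposes that the set of directions in which $J$ approaches $\alpha$ is connected, which is precisely the no-wrapping statement to be established. Proposition \ref{LokSopr} cannot give this, since it is purely local, and Lemma \ref{1ist1st} only says that a single source lies in the closure of $J$ as a subset of $M$, not that it is approached from a single sector. That this distinction is real is shown by the flow on the $2$-sphere with one source, one saddle and two sinks: each of its two cells contains a unique saddle and a unique $u$-curve in its boundary, yet both the saddle and the $u$-curve appear \emph{twice} on the ideal boundary of the cell. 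So ``unique in the boundary'' does not imply ``appears once on the ideal boundary''; the source is special, and a reason must be given. The paper supplies exactly this reason via Proposition \ref{Wupodmn}: the basin $W^u_\alpha$ is homeomorphic to $\mathbb R^2$, the $s$-curves accumulating on $\alpha$ are disjoint properly embedded rays there (together with $\alpha$ itself), and the cells inside $W^u_\alpha$ are precisely the complementary ``pie sectors'', each of which meets a small circle about $\alpha$ in a single arc. This global structure of the basin is the missing ingredient in your argument.

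Granting that step, the rest of your proof is correct but genuinely different from the paper's. You analyse the ideal boundary arcs through the local saddle picture (each visit to a saddle enters along a stable and leaves along an unstable separatrix), use the no-cycle property from the $\Omega$-stability criterion to show the $c$-chain is finite, and then cut along the $t$-curve by a Jordan-type argument. The paper never touches this saddle combinatorics: it removes the $t$-curve \emph{first}, observes that inside $W^u_\alpha\cong\mathbb R^2$ each polygonal region is a sector bounded by exactly one $s$-curve and one $t$-curve (hence an open disk with unique $s$- and $t$-curves by construction), obtains the unique $u$-curve for free by running the same argument in the sink basin $W^s_\omega\cong\mathbb R^2$, and quotes item $(ii)$ of Lemma \ref{ZamykanieBezVsego} for finiteness of the $c$-curves. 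The source--sink duality makes the paper's argument shorter and avoids invoking the no-cycle condition at all; your version, once repaired by the basin argument, is more explicit about the actual order of curves along the boundary, which is information the paper only extracts later when defining the four-colour graph.
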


Denote by $\Delta_{f^t}$ the set of all polygonal regions of $f^t$ (see Fig. \ref{SlozhnPrIMnObl}, where a flow $f^t$ and all its polygonal regions are presented). 

\begin{figure}[htb]
\centerline{\includegraphics [width=15 cm] {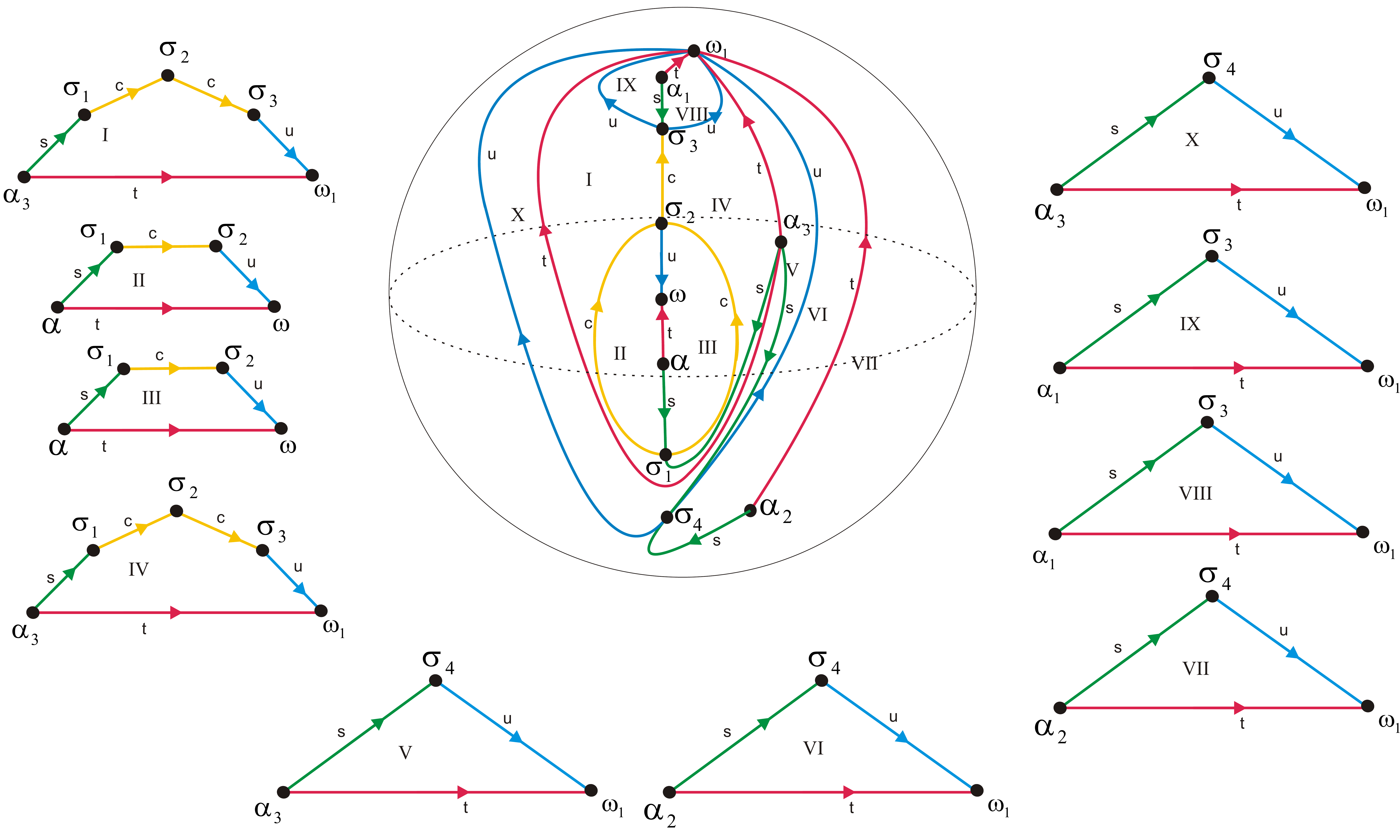}} \caption{An example of flow $f^t$ together with the polygonal regions}\label{SlozhnPrIMnObl}
\end{figure}

\begin{definition}
A multigraph is called {\it $n$-colour graph} if the set of its edges is the disjoint union of $n$ subsets, each of which consists of edges of the same colour.
\end{definition}

We say that a four-colour graph $\Gamma_{\mathcal M}$ with edges of colours $u,~s,~u,~t$ bijectively corresponds to $f^t$ if:

1) the vertices of $\Gamma_{\mathcal M}$ bijectively correspond to the polygonal regions of $\Delta_{f^t}$;

2) two vertices of $\Gamma_{\mathcal M}$ are incident to an edge of colour $s$, $t$, $u$ or $c$ if the polygonal regions corresponding to these vertices has a common $s$-, $t$-, $u$- or $c$-curve; that establishes an one-to-one correspondence between the edges of $\Gamma_{\mathcal M}$ and the colour curves;

3) if some vertex  $b$ of $\Gamma_{\mathcal M}$ is incident to more than one $c$-edge (the number $n_b$ of $c$-edges is more than $1$), then $c$-edges are ordered 
\begin{equation*}
c^b_1,\dots,c_{n_b}^b
\end{equation*} 
by a moving (according to the direction from the source to the sink on $t$-curve) along the boundary of the corresponding polygonal region (see, for example, Figure \ref{GrafS4Tipom}). 

\begin{definition} We say that the graph $\Gamma_{\mathcal M}$ is the four-colour graph of the flow $f^t$ corresponding to $\phi^t|_{\mathcal M}$.
\end{definition}

\begin{figure}[htb]
\centerline{\includegraphics [width=15 cm] {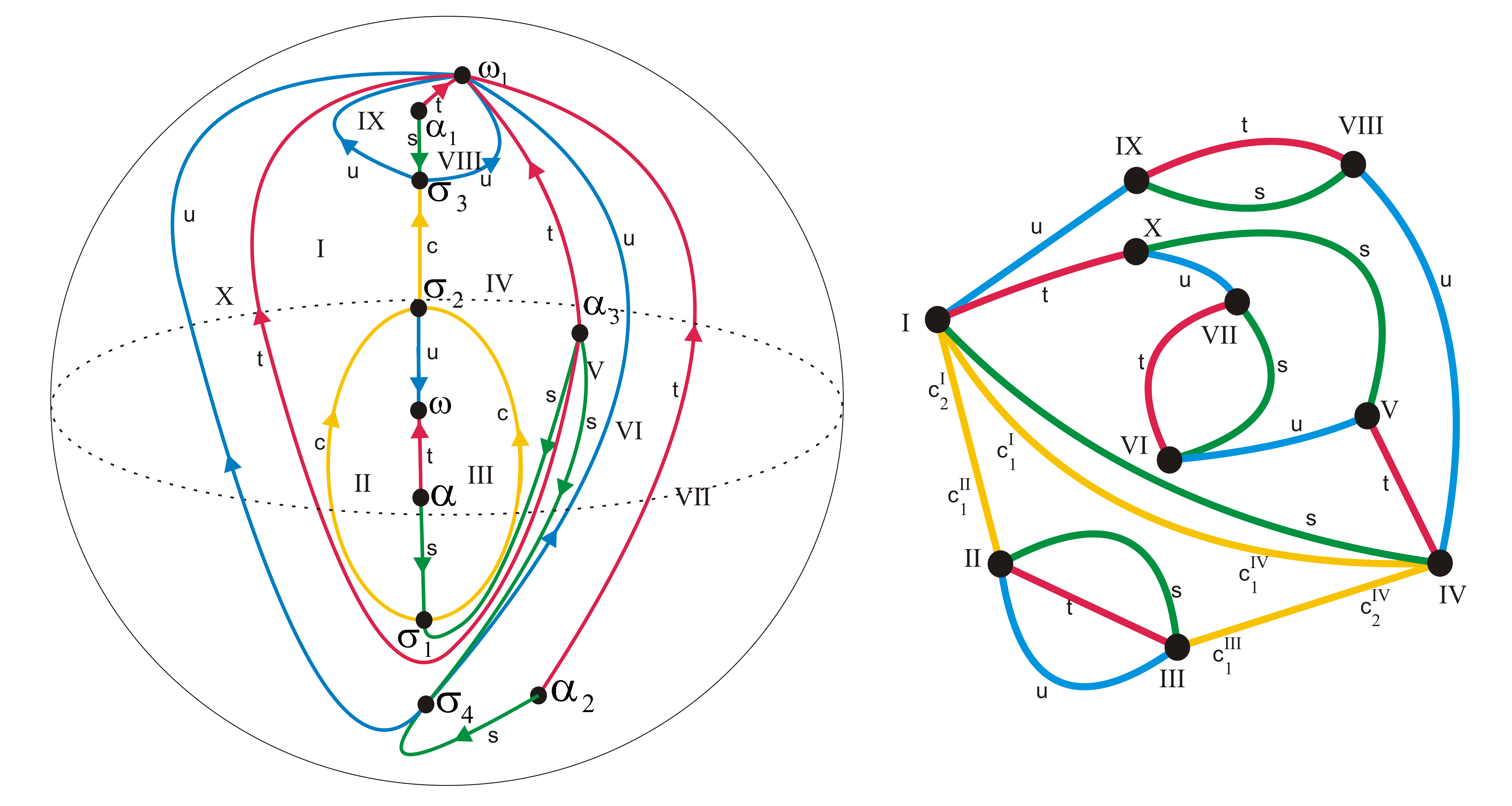}} \caption{An example of $f^t$ and its four-colour graph}\label{GrafS4Tipom}
\end{figure}

\begin{definition} Two four-colour graphs $\Gamma_{\mathcal M}$ and $\Gamma_{\mathcal M'}$ corresponding to $\phi^t|_{\mathcal M}$ and $\phi'^t|_{\mathcal M'}$ respectively are said to be {\it isomorphic} if there is an one-to-one correspondence $\psi$ of vertices and edges of the first graph to vertices and edges of the second graph preserving colours of all edges and numbers of $c$-edges.
\end{definition}

\subsection{$(\mathcal M,\mathcal L)$- and $(\mathcal L,\mathcal M)$-edge}\label{MLedge}

Let us denote by $\pi_{f^t}$ the one-to-one correspondence described above between polygonal regions and vertices, also between colour curves of $f^t$ and colour edges of $\Gamma_{\mathcal M}$ respectively.

Let us call a {\it $st$-cycle} ({\it $tu$-cycle}) a cycle of $\Gamma_{\mathcal M}$ consisting only of $s$- and $t$-edges ($t$- and $u$-edges). Let us call $u$- and $s$-edges exiting out a vertex $b$ as \emph{nominal $c$-edges} and assign the numbers $0$ and $n_b+1$ to them respectively. Let us call a {\it $c^*$-cycle} a simple cycle 
\begin{equation*}b_1, (b_1, b_2), b_2,\dots, b_{2k}, b_{2k+1}, b_{2k+1}=b_1,
\end{equation*}
 if 
\begin{equation*}(b_{2i-1}, b_{2i})=c^{b_{2i}}_m,\,(b_{2i}, b_{2i+1})=c^{b_{2i}}_{m+1}=c^{b_{2i+1}}_{l},\,(b_{2i+1}, b_{2i+2})=c^{b_{2i+1}}_{l-1}.
\end{equation*}

\begin{proposition}\label{TochkiICikly} The projection $\pi_{f^t}$ gives an one-to-one correspondence between the sets $\Omega^0_{f^t}$, $\Omega^1_{f^t}$, $\Omega^2_{f^t}$ and the sets of $tu$-, $c^*$-, and $st$-cycles respectively.
\end{proposition}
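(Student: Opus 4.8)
The plan is to exhibit the correspondence explicitly, by attaching to every fixed point of $f^t$ the cyclic chain of polygonal regions that surround it, and then reading off the colours of that chain's edges from the boundary word supplied by Lemma~\ref{vidmnob}. I would work in a small neighbourhood of the point, put the flow into the normal form of Proposition~\ref{LokSopr}, and observe that the curves passing through the point — its saddle separatrices, and at a sink or a source also the $t$-curves of the incident cells — cut the neighbourhood into finitely many sectors, each of which lies in a single polygonal region. Under $\pi_{f^t}$ these regions become vertices and the curves separating consecutive sectors become edges, so the surrounding regions assemble into a cycle of $\Gamma_{\mathcal M}$; the content of the proposition is that the colour type of this cycle records the type of the fixed point.

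I would treat the two extremal cases first. By Lemma~\ref{vidmnob} the boundary of every polygonal region, read cyclically, is $u,\,c_1,\dots,c_n,\,s,\,t$, so that the $t$-curve abuts the unique $u$-curve at the attracting corner (a sink) and the unique $s$-curve at the repelling corner (a source). Consequently, circulating around a sink one alternately crosses $t$-edges (inside a cell, which by Lemma~\ref{1ist1st} is a union of trajectories from a source to a sink) and $u$-edges (between cells), and the chain closes up into a $tu$-cycle; dually, a source is surrounded by an alternation of $t$- and $s$-edges, i.e. an $st$-cycle. Injectivity follows because every $u$-curve and every $t$-curve dies at a single sink (and every $s$- and $t$-curve is born at a single source), so the chains attached to distinct sinks, respectively sources, are edge-disjoint. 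For surjectivity I would argue that in any $tu$-cycle each adjacent $t$/$u$-pair meets at the attracting corner of the common region, so the cycle is pinned to one sink and therefore coincides with the chain constructed above; the symmetric argument with $s,t$ handles $st$-cycles. This establishes the bijections between sinks and $tu$-cycles and between sources and $st$-cycles asserted by the proposition.

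For a saddle $\sigma$ the normal form has four separatrices, alternately unstable and stable, bounding four hyperbolic sectors and hence (generically) four incident regions arranged in a cycle. Each such separatrix is a $c$-curve when it is a saddle connection, and otherwise a $u$- or an $s$-curve ending at a sink or issuing from a source; under the convention that the $u$- and $s$-edges of a region are the nominal $c$-edges numbered $0$ and $n_b+1$, all four edges of the chain are nominal or genuine $c$-edges. The key local fact is that at every region of the chain the two incident edges meet precisely at the corner $\sigma$, so by Lemma~\ref{vidmnob} and the prescribed ordering of the $c$-edges along the boundary they carry consecutive numbers; this is exactly the defining relation of a $c^*$-cycle. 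Injectivity is immediate, since every edge of the chain is a separatrix incident to $\sigma$, and $\sigma$ is recovered as the common corner of the chain; surjectivity follows because the consecutive-numbering relation forces any $c^*$-cycle to turn about one fixed corner, which must be a saddle.

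The step I expect to be the main obstacle is matching the precise shape of the $c^*$-relation — that the indices increase ($m\mapsto m+1$) at the even vertices and decrease ($l\mapsto l-1$) at the odd ones — to the geometry of turning once around $\sigma$. Since the numbering of a region's boundary curves is fixed by the source-to-sink direction on its $t$-curve, and since the stable and unstable separatrices alternate around $\sigma$, the source-to-sink orientations of two regions adjacent across a separatrix wind oppositely about $\sigma$, and this is what flips the numbering from increasing to decreasing at each step. Making this orientation bookkeeping rigorous, and verifying that the chain closes after exactly one revolution so that the resulting cycle is simple and of even length $2k$ — with no short-cutting through regions that happen to touch $\sigma$ at two corners on surfaces of higher genus — is the delicate part; the sink and source cases are comparatively routine once the boundary word $u,c_1,\dots,c_n,s,t$ is in hand.
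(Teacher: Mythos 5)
Your proposal is correct and follows essentially the same route as the paper: the paper likewise reads off, around each fixed point, the cyclic chain of adjoining polygonal regions and separating colour curves (for saddles this is delegated to the proof of Lemma~\ref{GmAdm}; for sinks it is the observation that the basin is divided alternately by $u$- and $t$-curves), and obtains the converse by noting that each $tu$-, $st$- or $c^*$-cycle is pinned to a neighbourhood of a single fixed point. Your treatment is in fact more explicit than the paper's on the one genuinely delicate point — the matching of the increasing/decreasing index pattern in the $c^*$-relation to the opposite winding of adjacent regions' boundary orientations about the saddle — which the paper passes over in silence.
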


By our construction $M=\mathcal M\cup D$, where $D$ is either empty or each its connected component contains exactly one sink $\omega$ (source $\alpha$) of the flow $f^t$, uniquely corresponding to a cutting circle $R_\mathfrak{c}$ for a limit cycle $\mathfrak{c}$ of the flow $\phi^t$, which uniquely corresponds to a $(\mathcal M,\mathcal L)$-edge ($(\mathcal L,\mathcal M)$-edge) of the graph $\Upsilon_{\phi^t}$. Due to Proposition \ref{TochkiICikly} the node $\omega$ ($\alpha$) uniquely corresponds to a $tu$-cycle (a $st$-cycle), denote it by $\tau_{_{\mathcal M,\mathcal L}}$ ($\tau_{_{\mathcal L,\mathcal M}}$). Moreover, due to Proposition \ref{TochkiICikly}, we can embed the graph $\Gamma_{\mathcal M}$ such that the cycle $\tau_{_{\mathcal M,\mathcal L}}$ ($\tau_{_{\mathcal L,\mathcal M}}$) coincides with $R_\mathfrak{c}$. Thus we induce an orientation from $R_\mathfrak{c}$ to the cycle and call the cycle $\tau_{_{\mathcal M,\mathcal L}}$ ($\tau_{_{\mathcal L,\mathcal M}}$) {\it oriented} one. 

\section{The formulation of the results}

\begin{definition}\label{Y*} Let $\Upsilon_{\phi^t}$ be the directed graph of a flow $\phi^t\in G$. We will say that $\Upsilon_{\phi^t}$ is the {\it equipped graph} of $\phi^t$ and denote it by $\Upsilon^*_{\phi^t}$ if:

(1) every $\mathcal E$-vertex is equipped with the weight ``$+$'' or ``$-$'' in consistent and inconsistent case respectively;

(2) every $\mathcal M$-vertex is equipped with a four-colour graph $\Gamma_{\mathcal M}$ corresponding to the flow $f^t$ constructed in Subsection \ref{Mvertex};

(3) every edge $(\mathcal M,\mathcal L)$ ($(\mathcal L,\mathcal M)$) is equipped with an oriented $tu$-cycle  ($st$-cycle) $\tau_{_{\mathcal M,\mathcal L}}$ ($\tau_{_{\mathcal L,\mathcal M}}$) of $\Gamma_{\mathcal M}$ corresponding to the limit cycle $\mathfrak c$ of $\mathcal L$ and oriented consistently with $R_\mathfrak c$ (see Fig. \ref{OmegaUstIEgoYpsilonDvaPrimera}).
\end{definition}

\begin{figure}[htb]
\centerline{\includegraphics [width=15 cm]{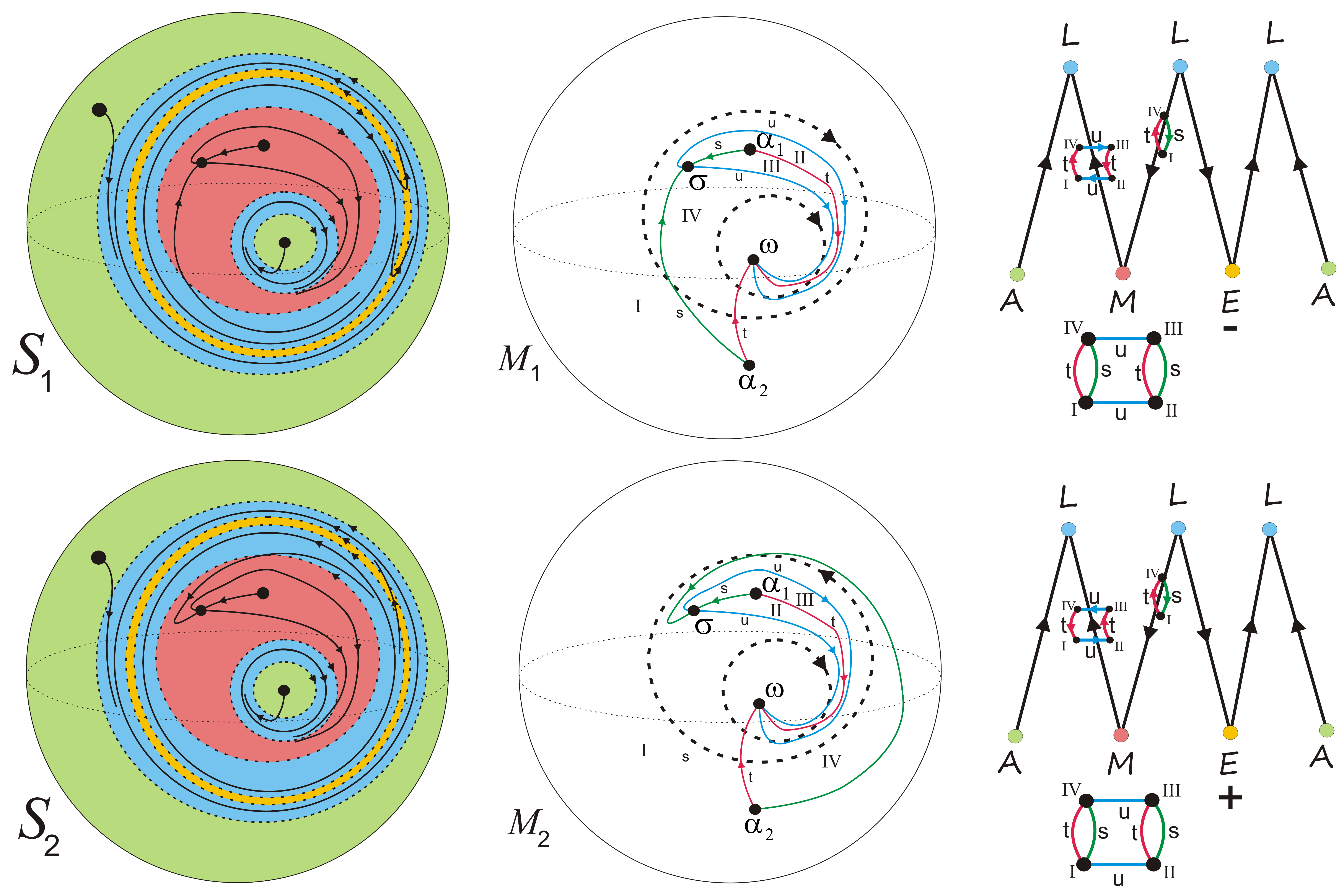}} \caption{Two flows from $G$ and their equipped graphs}\label{OmegaUstIEgoYpsilonDvaPrimera}
\end{figure}

On Fig. \ref{DvaPrimeraRaznOrientMezhduMiA} you can see the two examples of flows from $G$ whose difference might be defined only by oriented cycles equipping their graphs, and on Fig. \ref{DvaPrimeraNaTore} -- by weight of $\mathcal E$-vertices.

\begin{figure}[htb]
\centerline{\includegraphics [width=15 cm]{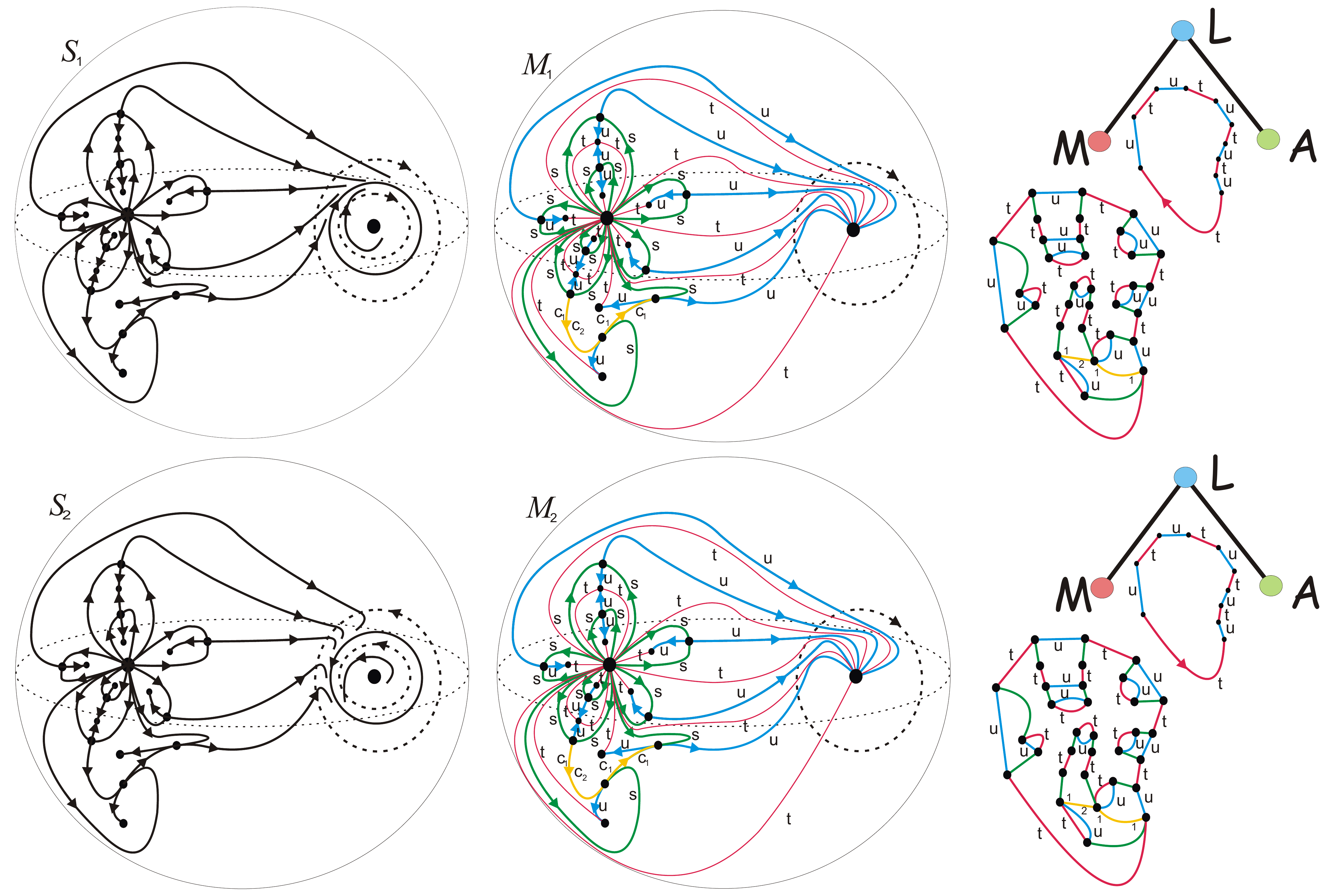}} \caption{Two examples of flows from $G$ differing only by orientation of the limit cycle between $\mathcal M$ and $\mathcal A$ and their equipped graphs}\label{DvaPrimeraRaznOrientMezhduMiA}
\end{figure}

\begin{figure}[htb]
\centerline{\includegraphics [width=9 cm]{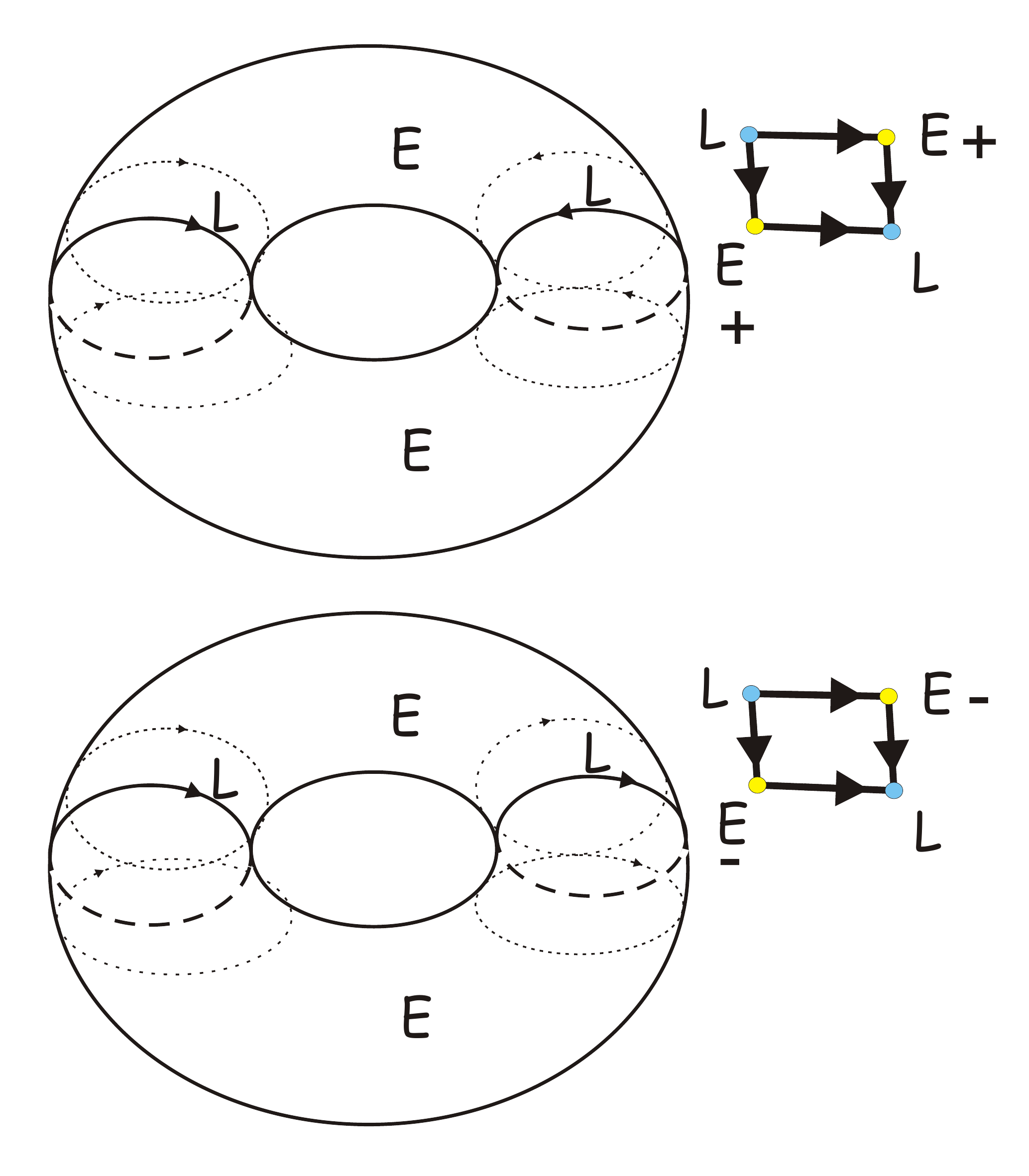}} \caption{Two examples of flow from $G$ without $\mathcal A$- and $\mathcal M$-regions differing only by orientation of the limit cycle and their equipped graphs}\label{DvaPrimeraNaTore}
\end{figure}

Let us denote by $\pi^*_{\phi^t}$ the one-to-one correspondence described above between the elementary regions and the vertices, the cutting circles and the edges, the directions of the trajectories and the directions of the edges, the consistencies of the orientations of the boundary's connecting components of $\mathcal E$-regions and the weights of the $\mathcal E$-vertices, the $\mathcal M$-regions and the four-colour graphs, the stable limit cycles and the $tu$-cycles, the unstable limit cycles and the $st$-cycles, the orientations of the stable limit cycles and the orientations of the cycles $\tau_{_{\mathcal M,\mathcal L}}$, the orientations of the unstable limit cycles and the orientations of the cycles $\tau_{_{\mathcal L,\mathcal M}}$ accordingly.

\subsection{The classification result}

\begin{definition} Equipped graphs $\Upsilon^*_{\phi^t}$ and $\Upsilon^*_{\phi'^t}$ are said to be {\it isomorphic} if there is an one-to one correspondence $\xi$ between all edges and vertices of $\Upsilon^*_{\phi^t}$ and all edges and vertices of $\Upsilon^*_{\phi'^t}$ preserving their equipments in the following way:

(1) the weights of vertices $\mathcal E$ and $\xi(\mathcal E)$ are equal;

(2) for vertices $\mathcal M$ and $\xi(\mathcal M)$, there is an isomorphism $\psi_{\mathcal M}$ of the four-colour graphs $\Gamma_{\mathcal M},\,\Gamma_{\xi(\mathcal M)}$ such that $\psi_{\mathcal M}(\tau_{_{\mathcal M, \mathcal L}})=\tau_{_{\xi(\mathcal M),\xi(\mathcal L)}}$ and the orientations of $\psi_{\mathcal M}(\tau_{_{\mathcal M, \mathcal L}})$ and $\tau_{_{\xi(\mathcal M),\xi(\mathcal L)}}$ coincide (similarly for $\tau_{_{\mathcal L, \mathcal M}}$).
\end{definition}

\begin{theorem}\label{Classif} Flows $\phi^t, \phi'^t\in G$ are topologically equivalent if and only if the equipped graphs $\Upsilon^*_{\phi^t}$ and $\Upsilon^*_{\phi'^t}$ are isomorphic.
\end{theorem}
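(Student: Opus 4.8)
The plan is to prove the two implications separately. The forward (necessity) direction is essentially a verification that a topological equivalence transports all the dynamically defined data, whereas the converse (sufficiency) requires an explicit reconstruction of a topological equivalence from the combinatorial equipment, followed by a gluing argument.

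For \emph{necessity}, suppose $h\colon S\to S'$ is a topological equivalence carrying orbits of $\phi^t$ to orbits of $\phi'^t$ and preserving their time orientation. Conjugating the orbit foliations, $h$ maps $\Omega_{\phi^t}$ onto $\Omega_{\phi'^t}$ preserving the dimension of unstable manifolds, hence sinks to sinks, saddles to saddles, sources to sources, and stable (unstable) limit cycles to stable (unstable) ones. I would first arrange, after a preliminary isotopy along orbits, that $h(\mathcal R)=\mathcal R'$: the canonical neighbourhoods $U_{\mathfrak c}$ and their transversal boundaries $R_{\mathfrak c}$ are determined by the flow up to such isotopy, so the cutting sets can be matched. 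Then $h$ descends to a bijection of elementary regions respecting their types (by Proposition \ref{SvOrGr}) and the flow directions across cutting circles, which is precisely a directed-graph isomorphism. It preserves every piece of equipment: the $\mathcal E$-weights, because $h$ preserves the orientability of the boundary identifications (Figure \ref{EDvaSl}); the four-colour graphs, because the restriction of $h$ to an $\mathcal M$-region extends to a topological equivalence of the capped flows $f^t,\,f'^t$, which induces an isomorphism of the graphs $\Gamma_{\mathcal M}$ by the results of \cite{KruMaPoMS}; and finally the oriented $tu$- and $st$-cycles, which correspond with matching orientations by Proposition \ref{TochkiICikly}, since $h$ respects the orientations consistent with the limit cycles.

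For \emph{sufficiency} I would build the equivalence piece by piece and then glue. Given an isomorphism $\xi$, on each $\mathcal A$-region I realise the unique source or sink pool on a disk, on each $\mathcal L$-region the unique stable/unstable limit-cycle flow on the annulus or M\"obius band, and on each $\mathcal E$-region the unique transit flow with the prescribed ($\pm$) orientation consistency; in each case the local model is determined up to equivalence by the equipment and admits arbitrary reparametrisation of its boundary circles. On each $\mathcal M$-region I would invoke the completeness of the four-colour graph as an invariant for the capped flow, established in \cite{KruMaPoMS}: the isomorphism $\psi_{\mathcal M}$ supplied by $\xi$ is realised by a topological equivalence $h_{\mathcal M}\colon M\to M'$ of the extended flows, which I then restrict to the $\mathcal M$-region.

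The remaining and principal difficulty is to glue these local equivalences along the cutting circles into a single homeomorphism of $S$. I would first define the map on the cutting set $\mathcal R$ itself: each circle $R_{\mathfrak c}$ carries an orientation consistent with its limit cycle, and the condition $\psi_{\mathcal M}(\tau_{_{\mathcal M,\mathcal L}})=\tau_{_{\xi(\mathcal M),\xi(\mathcal L)}}$ with coinciding orientations forces the identification of $R_{\mathfrak c}$ coming from the $\mathcal M$-side (the cyclic order of the $u$- and $s$-curves along the $tu$-/$st$-cycle, recorded by the ordering of the $c$-edges in item (3) of the four-colour graph definition) to agree with the identification coming from the $\mathcal L$-side. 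This is exactly the role of the orientation equipment, and it is the step I expect to be the main obstacle: without it the two boundary parametrisations could differ by a reflection, producing the genuinely non-equivalent gluings illustrated in Figures \ref{DvaPrimeraRaznOrientMezhduMiA} and \ref{DvaPrimeraNaTore}. Once a consistent homeomorphism $\mathcal R\to\mathcal R'$ is fixed, I would use the boundary flexibility of the local models to extend it across every elementary region simultaneously, so that the extensions agree on each shared circle; since topological equivalence requires only preservation of the oriented orbit structure and not of the time parametrisation, these extensions can always be reparametrised along orbits to match. The resulting map is then a homeomorphism $S\to S'$ sending orbits to orbits and preserving time direction, which is the desired topological equivalence.
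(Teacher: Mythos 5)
Your necessity argument and the overall architecture of your sufficiency argument (local models on the elementary regions, plus a gluing controlled by the orientation equipment) coincide with the paper's. The genuine problem is in your gluing step. You assert that the $\mathcal A$-, $\mathcal L$- and $\mathcal E$-models all ``admit arbitrary reparametrisation of their boundary circles'', and on that basis you propose to fix a homeomorphism $\mathcal R\to\mathcal R'$ first and then extend it into every elementary region simultaneously. This is false for $\mathcal E$-regions: the flow in an $\mathcal E$-region induces a transit homeomorphism $g\colon E_1\to E_2$ between its two boundary circles (each orbit entering through $E_1$ leaves through $E_2$), and any topological equivalence $h$ of $\mathcal E$-regions must satisfy $h|_{E_2}\circ g=g'\circ h|_{E_1}$, where $g'$ is the transit map of the target region. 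Hence the restriction of $h$ to one boundary circle completely determines its restriction to the other, and a pair of circle maps fixed in advance will in general violate this relation. No reparametrisation along orbits can repair this, because the relation constrains where orbits land, not the time parameter. A similar (trace-of-separatrices) rigidity constrains the circles bounding $\mathcal M$-regions, which you do partially address via the cyclic order of the $c$-edges and the oriented cycles, but the $\mathcal E$-constraint is never mentioned, and your notion of a ``consistent'' map on $\mathcal R$ is not made precise enough to include it.

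The missing idea is the ordering used in the paper's proof: one first constructs the homeomorphisms on the closures of all $\mathcal M$-, $\mathcal E$- and $\mathcal A$-regions (for an $\mathcal E$-region choosing the map on one boundary circle arbitrarily, the map on the other circle then being dictated by the transit relation), which causes no conflicts because, by Proposition \ref{SvOrGr}, no two such regions share a cutting circle --- every cutting circle has an $\mathcal L$-region on one side. The $\mathcal L$-regions are treated last: they are the only pieces that are genuinely flexible on both boundary circles, since their orbits spiral onto (or away from) the limit cycle rather than crossing the region, and the Poincar\'e-section construction of part IV of the paper's proof (following \cite{GurKur}) extends any orientation-consistent boundary data, coming independently from the two sides, to an equivalence of $\mathcal L$-regions; the orientation equipment of the graph is exactly what guarantees this consistency with the direction of rotation around the limit cycles. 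With this reordering your plan becomes the paper's proof; as literally written, the simultaneous extension step fails at the $\mathcal E$-regions.
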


\subsection{The realisation results}

To solve the realization problem, we introduce the notion of an admissible four-colour graph and an equipped graph.

Let $\Gamma$ be a four-colour graph with the properties:

(1) every edge of the four-colour graph is coloured in one of the four colors: $s, u, t, c$; 

(2) every vertex of the four-colour graph is incident to exactly one edge of the colours $s, u, t$. Besides, the number $n_b$ of $c$-edges incident to a vertex $b$ can be any (may be null) and these edges $c^b_1,\dots,c^b_{n_b}$ are ordered if $n_b\geq 1$.

\begin{definition}\label{afcg} The graph $\Gamma$ we call an {\it admissible four-colour graph} if it contains $c^*$-cycles and every such a cycle has four vertices.
\end{definition}

\begin{lemma}\label{GmAdm} The graph $\Gamma_{\mathcal M}$ is admissible.
\end{lemma}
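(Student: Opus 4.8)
The plan is to check the two conditions in Definition~\ref{afcg} separately: first that $\Gamma_{\mathcal M}$ contains a $c^*$-cycle, and then that every $c^*$-cycle has exactly four vertices. My main tool in both steps is Proposition~\ref{TochkiICikly}, which matches $c^*$-cycles with the saddle points of $f^t$, together with the local normal form of a saddle furnished by Proposition~\ref{LokSopr}.

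The existence of a $c^*$-cycle is immediate. By the construction of an $\mathcal M$-region in Subsection~\ref{Mvertex}, the flow $f^t$ has at least one saddle point, and Proposition~\ref{TochkiICikly} puts the saddles of $f^t$ in bijection with the $c^*$-cycles of $\Gamma_{\mathcal M}$; hence at least one $c^*$-cycle is present. This disposes of the first clause of the definition.

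For the second clause I would take an arbitrary $c^*$-cycle and let $q$ be the saddle it corresponds to under Proposition~\ref{TochkiICikly}. By Proposition~\ref{LokSopr} (the case $b^t$) the point $q$ is a hyperbolic saddle, so it has precisely two stable and two unstable separatrices; these four separatrices are pairwise distinct, because the $\Omega$-stability of $f^t$ rules out saddle cycles and, in particular, homoclinic loops. Stable and unstable separatrices alternate around $q$, so they cut a small punctured neighbourhood of $q$ into exactly four sectors, each contained in a single polygonal region by Lemma~\ref{vidmnob}. The $c^*$-cycle attached to $q$ is exactly the cyclic list of these regions, and its edges are the four separatrices of $q$: each separatrix is the common boundary curve of two neighbouring sectors, and it contributes a $c$-edge if it joins $q$ to another saddle and a nominal $c$-edge of colour $u$ or $s$ if instead it runs into the sink or source that is its $\omega$- or $\alpha$-limit. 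Therefore the cycle has exactly four edges, and so four vertices.

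The step I expect to be the main obstacle is the precise translation between the combinatorial definition of a $c^*$-cycle and this geometric loop around $q$. I would verify that, moving once around a small circle encircling $q$, one crosses each separatrix of $q$ exactly once and enters and leaves every adjacent polygonal region through two boundary curves that are consecutive in that region's ordering of its $c$-edges; this is exactly the incidence pattern $(b_{2i-1},b_{2i})=c^{b_{2i}}_{m}$, $(b_{2i},b_{2i+1})=c^{b_{2i}}_{m+1}$ demanded in the definition, the index rising at the even vertices and falling at the odd ones because the source-to-sink orientation of two neighbouring regions is reversed relative to the encircling loop. Checking this index bookkeeping, and with it that no separatrix belonging to a saddle other than $q$ ever occurs among the edges, is the delicate part; once it is established, the number of edges of the cycle equals the number of separatrices of $q$, namely four. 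Finally I would confirm that the four regions are genuinely distinct---so that the cycle is a true quadrilateral rather than a degenerate closed walk---by invoking the uniqueness of the $u$- and $s$-curves in the boundary of any polygonal region (Lemma~\ref{vidmnob}) to preclude one region from occupying two of the four sectors at $q$.
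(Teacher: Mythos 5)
Your geometric picture---four pairwise-distinct separatrices at a hyperbolic saddle, alternating stable/unstable, cutting a small punctured neighbourhood into four sectors, each lying in a single polygonal region---is exactly the content of the paper's own proof of this lemma. But the logical scaffolding you hang it on is circular within this paper. You invoke Proposition \ref{TochkiICikly} twice: to get existence of a $c^*$-cycle from existence of a saddle, and, for the second clause, to produce the saddle $q$ corresponding to an arbitrary $c^*$-cycle. In the paper, however, the proof of Proposition \ref{TochkiICikly} begins by stating that the correspondence between $\Omega^1_{f^t}$ and the set of $c^*$-cycles \emph{follows from the proof of Lemma \ref{GmAdm}}. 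So the saddle/$c^*$-cycle bijection is a consequence of the lemma you are proving, not an available tool; citing it here closes a circle rather than an argument.

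The gap is concentrated exactly at the step you yourself flag as ``the delicate part'': showing that an arbitrary $c^*$-cycle of $\Gamma_{\mathcal M}$---a purely combinatorial object defined by the index pattern $(b_{2i-1},b_{2i})=c^{b_{2i}}_{m}$, $(b_{2i},b_{2i+1})=c^{b_{2i}}_{m+1}$---is realized geometrically as the loop of sectors around a \emph{single} saddle point. That is precisely the direction of the bijection you cannot take for granted, and it is what the paper proves directly: it embeds $\Gamma_{\mathcal M}$ in $M$, shows that each saddle's link of four sectors yields a $c^*$-cycle of length four, and argues conversely that every $c^*$-cycle can be placed in a neighbourhood of a single saddle. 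The repair is to promote your deferred verification to the actual proof: establish both directions of the correspondence from the flow geometry alone (your four-sector count gives one direction; the index bookkeeping you sketched gives the other), and then obtain existence from the fact that an $\mathcal M$-region contains at least one saddle. Done that way your argument coincides with the paper's, and Proposition \ref{TochkiICikly} becomes a corollary of it---as the paper intends---rather than an ingredient.
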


\begin{lemma}\label{ReBezPer} Every admissible four-colour graph $\Gamma$ corresponds to a closed surface $M$ and an $\Omega$-stable flow $f^t\colon M\to M$ from $G$ without limit cycles, besides:

(1) The Euler characteristic of $M$ can be calculated by the formula 
\begin{equation}\label{EjHarBezPer}
\chi(M)=\nu_0-\nu_1+\nu_{2},
\end{equation} 
where $\nu_0, \nu_1, \nu_{2}$ are the numbers of all $tu$-, $c^*$- and $st$-cycles of $\Gamma$ respectively;

(2) $M$ is non-orientable if and only if $\Gamma$ has at least one cycle with an odd length.
\end{lemma}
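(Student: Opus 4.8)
The plan is to reverse the construction of Subsection~\ref{Mvertex}: from the abstract admissible graph $\Gamma$ I build model pieces, glue them, and then read off $\chi(M)$ and orientability from the combinatorics. To a vertex $b$ with $n_b$ ordered $c$-edges I associate a model polygonal region: a closed disk carrying a model gradient-like flow whose boundary, by Lemma~\ref{vidmnob}, is subdivided into one $t$-arc, one $s$-arc, one $u$-arc and $n_b$ $c$-arcs, arranged in the cyclic order prescribed by the ordering $c^b_1,\dots,c^b_{n_b}$ (the source corner lying between the $s$- and $t$-arcs, the sink corner between the $u$- and $t$-arcs). I then glue these disks following the edges of $\Gamma$: an $s$-, $t$-, $u$- or $c$-edge tells me to identify the corresponding arcs of the two incident disks, the identifying homeomorphism being chosen to match the model flows, so that trajectories extend across the seam. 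This produces a space $M$ with a flow $f^t$.

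Next I verify that $M$ is a closed $2$-manifold and that $f^t$ is an $\Omega$-stable flow of class $G$ without limit cycles, and simultaneously establish part~(1). Away from the images of the arc-endpoints the gluing is manifestly a surface; at the points where arcs meet I use Proposition~\ref{TochkiICikly}. The identifications force the $t$- and $u$-arcs around a $tu$-cycle (respectively the $s$- and $t$-arcs around an $st$-cycle) to close up into the link of a single point, which by the model flow is a hyperbolic sink (respectively source) as in Proposition~\ref{LokSopr}, while the arcs around a $c^*$-cycle close up into the link of a saddle. Here admissibility (Definition~\ref{afcg}) is essential: a $c^*$-cycle of exactly four vertices yields precisely the four hyperbolic sectors of a nondegenerate saddle, whereas any other length would produce a non-hyperbolic singularity and spoil $\Omega$-stability. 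Finiteness of $\Gamma$ gives finitely many hyperbolic singularities and, by construction, no closed trajectories; since every region is swept by trajectories running from its source to its sink, $f^t$ is gradient-like, so its saddle separatrices form no cycle and $f^t$ is $\Omega$-stable, with at least one saddle (as $\Gamma$ has a $c^*$-cycle) so that $f^t\in G$. For part~(1) I invoke the Poincar\'e--Hopf theorem: on a closed surface $\chi(M)$ equals the sum of the indices of the singularities, a node contributing $+1$ and a saddle $-1$. By construction the four-colour graph of $f^t$ is $\Gamma$, so Proposition~\ref{TochkiICikly} identifies the numbers of sources, saddles and sinks with $\nu_0$, $\nu_1$, $\nu_2$, whence $\chi(M)=\nu_0-\nu_1+\nu_2$.

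For part~(2) I detect orientability by a $\mathbb Z/2$-cocycle on $\Gamma$. Orient each model disk by its \emph{flow-canonical} orientation, so that the oriented boundary leaves the source along the $s$-arc, runs along the $c$-arcs and the $u$-arc to the sink, and returns along the $t$-arc. Tracing this loop shows that the $s$-, $c$- and $u$-arcs are traversed \emph{with} the flow while the $t$-arc is traversed \emph{against} it; consequently, along any curve the two incident disks induce the \emph{same} orientation on the shared arc, so their flow-canonical orientations are incompatible across that edge. Thus the orientation cocycle $c\colon E(\Gamma)\to\mathbb Z/2$ is identically $1$, and $M$ is orientable iff $c$ is a coboundary, iff $\sum_{e\in\gamma}c(e)=0$ for every cycle $\gamma$, that is, iff every cycle of $\Gamma$ has even length; equivalently $M$ is non-orientable iff $\Gamma$ has a cycle of odd length.

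I expect the main obstacle to be the manifold-and-dynamics verification of the second step: proving that closing up at saddles produces genuinely hyperbolic saddles exactly when $c^*$-cycles have four vertices, and that the glued flow carries no separatrix cycles and is therefore honestly $\Omega$-stable. The bookkeeping for the orientation cocycle — checking the flow-direction of every arc type, in particular the $c$-arcs at a saddle corner — is delicate but becomes routine once the flow-canonical orientation is fixed.
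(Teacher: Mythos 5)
Your proposal is correct and follows essentially the same route as the paper's own proof: the paper likewise realises each vertex of $\Gamma$ as a model polygon (the upper half of a regular $2(n-1)$-gon carrying an explicit vector field whose boundary sides are the $t$-, $s$-, $c$- and $u$-arcs), glues these polygons along same-coloured sides according to the edges of $\Gamma$, derives part (1) from Proposition \ref{TochkiICikly} together with the Poincar\'e--Hopf index formula, and proves part (2) by assigning signs ``$+$''/``$-$'' to polygonal regions and observing that a consistent orientation exists iff the vertices of $\Gamma$ admit a proper $2$-colouring, i.e. iff $\Gamma$ has no odd cycles. Your $\mathbb{Z}/2$-cocycle formulation of orientability is just a rephrasing of that sign argument, and the verification you flag as the main obstacle (hyperbolicity at the closed-up corners and smoothness of the glued flow) is exactly what the paper handles with its explicit coordinate charts around the points corresponding to $c^*$-, $st$- and $tu$-cycles.
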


\begin{definition} We call $\Upsilon^*$ an {\it admissible equipped graph} if it is a connected directed graph $\Upsilon$ with $\mathcal A$-, $\mathcal L$-, $\mathcal E$- and $\mathcal M$-vertices satisfying the items (1)--(4) of Proposition \ref{SvOrGr}, so that 

-- every $\mathcal M$-vertex is equipped with an admissible four-colour graph $\Gamma_{\mathcal M}$, 

-- every edge entering to (exiting out of) any $\mathcal M$-vertex is equipped with an oriented $st$-cycle ($ut$-cycle) of the four-colour graph, 

-- every $\mathcal E$-vertex is assigned with a weight ``$+$'' or ``$-$''.
\end{definition}

\begin{lemma}\label{Dopust} The graph $\Upsilon^*_{\phi^t}$ is admissible.
\end{lemma}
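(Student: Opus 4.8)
The plan is to check directly that the equipped graph $\Upsilon^*_{\phi^t}$ built from an arbitrary flow $\phi^t\in G$ meets every clause in the definition of an admissible equipped graph. That definition is a conjunction of structural requirements, so the proof reduces to a point-by-point verification, each clause being matched to a statement already established above.

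First I would show that the underlying directed graph $\Upsilon_{\phi^t}$ is connected and that each of its vertices has one of the four types $\mathcal A,\mathcal L,\mathcal E,\mathcal M$. Connectedness follows from the connectedness of the ambient surface $S$: for two elementary regions, join corresponding interior points by a path in $S$ chosen transverse to the cutting set $\mathcal R$, and read off the finite sequence of cutting circles it crosses; this yields a path in $\Upsilon_{\phi^t}$ between the two vertices. The claim on vertex types is immediate from the exhaustive, pairwise disjoint classification of elementary regions into $\mathcal L$-, $\mathcal A$-, $\mathcal M$- and $\mathcal E$-regions. The required incidence conditions are literally items (1)--(4) of Proposition \ref{SvOrGr}, so no new argument is needed there.

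Next I would verify the equipment carried by the $\mathcal M$-vertices and their incident edges. By Proposition \ref{SvOrGr}(1) an $\mathcal M$-vertex is joined only to $\mathcal L$-vertices, hence every edge at an $\mathcal M$-vertex is an $(\mathcal L,\mathcal M)$- or an $(\mathcal M,\mathcal L)$-edge, i.e.\ it enters or exits the vertex. The four-colour graph $\Gamma_{\mathcal M}$ attached in Subsection \ref{Mvertex} is admissible by Lemma \ref{GmAdm}. For the edges, recall from the construction $M=\mathcal M\cup D$ that each cutting circle $R_\mathfrak c$ bounding the region is capped by a disk carrying a single sink or source of $f^t$; by Proposition \ref{TochkiICikly} this sink (source) corresponds to a $tu$-cycle (an $st$-cycle) of $\Gamma_{\mathcal M}$, and Definition \ref{Y*}(3) assigns exactly that cycle, oriented consistently with $R_\mathfrak c$, to the edge. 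Thus every edge exiting an $\mathcal M$-vertex carries an oriented $tu$-cycle (the same notion as a $ut$-cycle) and every entering edge an oriented $st$-cycle, as demanded; when $\phi^t$ has no limit cycles the graph is a single $\mathcal M$-vertex with $D=\emptyset$ and the edge clauses hold vacuously.

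Finally, each $\mathcal E$-vertex carries a weight ``$+$'' or ``$-$'' by clause (1) of Definition \ref{Y*}. Collecting the four verifications proves the lemma. The only clause demanding a genuine argument rather than a citation is the connectedness of $\Upsilon_{\phi^t}$; I therefore expect that step to be the main, though mild, obstacle, the point to be careful about being that the path in $S$ can be taken transverse to $\mathcal R$ so that its sequence of circle crossings is well defined and each crossing genuinely passes from one elementary region to another.
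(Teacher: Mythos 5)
Your proof is correct and takes essentially the same approach as the paper: in fact the paper gives no separate proof of Lemma \ref{Dopust}, treating it as immediate from Proposition \ref{SvOrGr}, Lemma \ref{GmAdm}, Proposition \ref{TochkiICikly} and Definition \ref{Y*}, which is precisely the clause-by-clause verification you carry out. Your explicit transversality argument for the connectedness of $\Upsilon_{\phi^t}$ (and the remark that exiting edges carry $tu$-cycles, entering edges $st$-cycles, vacuously so when there are no limit cycles) fills in correctly the details the paper leaves implicit.
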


For every $\mathcal M$-vertex of an admissible equipped graph $\Upsilon^*$, let us denote by $X_{\mathcal M}$ the result of applying the formula (\ref{EjHarBezPer}) to the corresponding admissible four-colour graph $\Gamma_{\mathcal M}$. Denote by $Y_{\mathcal M}$ the quantity of edges, which are incident to $\mathcal M$ and denote by $N_\mathcal A$ the quantity of $\mathcal A$-vertices of $\Upsilon^*$.

\begin{theorem}\label{Realiz} Every admissible equipped graph $\Upsilon^*$ corresponds to an $\Omega$-stable flow $\phi^t\colon S\to S$ from $G$ on a closed surface $S$, besides:

(1) The Euler characteristic of $S$ can be calculated by the formula 
\begin{equation}\label{EjHar}
\chi(S)=\sum\limits_{\mathcal M}(X_{\mathcal M}-Y_{\mathcal M})+N_\mathcal A;
\end{equation}

(2) $S$ is orientable if and only if every four-colour graph equipping $\Upsilon^*$ has not cycles of an odd length and every $\mathcal L$-vertex is incident to exactly two edges.
\end{theorem}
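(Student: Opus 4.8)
The plan is to prove Theorem~\ref{Realiz} by assembling the ambient surface $S$ from local pieces dictated by the combinatorial data of $\Upsilon^*$, and then verifying that the resulting flow lies in $G$ and that the two invariant formulas hold. First I would build a model flow on each elementary region separately. For every $\mathcal M$-vertex, Lemma~\ref{ReBezPer} already supplies a closed surface $M$ carrying an $\Omega$-stable flow $f^t$ without limit cycles, realising its admissible four-colour graph $\Gamma_{\mathcal M}$; I would use that flow restricted to the complement of the disks $D$ corresponding to the oriented $tu$- and $st$-cycles equipping the edges incident to $\mathcal M$. For $\mathcal A$-, $\mathcal L$- and $\mathcal E$-vertices the local models are the standard pools, annular/M\"obius limit-cycle neighbourhoods, and transition annuli described in Section~4; each comes with a boundary that is a cutting circle carrying the orientation consistent with its limit cycle.

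The key gluing step is to realise each edge of $\Upsilon^*$. An $(\mathcal M,\mathcal L)$-edge (respectively $(\mathcal L,\mathcal M)$-edge) is equipped with an oriented $tu$-cycle (respectively $st$-cycle), and by Proposition~\ref{TochkiICikly} together with the embedding described in Subsection~\ref{MLedge} this cycle is realised as the boundary circle $R_{\mathfrak c}$ of the cut-open $\mathcal M$-piece. I would glue the corresponding boundary circle of the $\mathcal L$-piece to it by an orientation-reversing homeomorphism respecting the induced orientations, so that trajectories cross the seam transversally in the direction prescribed by the edge; the conditions of Proposition~\ref{SvOrGr}, built into admissibility, guarantee that the degrees and directions at every vertex are consistent, so the gluing is globally well-defined. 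Performing all the gluings yields a closed surface $S$ with a flow $\phi^t$ whose non-wandering set is exactly the union of the fixed points and limit cycles coming from the pieces; the no-cycle condition among saddles is inherited from each admissible four-colour graph, so $\phi^t$ is $\Omega$-stable and lies in $G$.

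For part~(1), I would compute $\chi(S)$ by additivity of the Euler characteristic over the pieces glued along circles. Each $\mathcal M$-piece contributes $X_{\mathcal M}$ minus the characteristic absorbed by the $Y_{\mathcal M}$ disks removed when cutting along its boundary circles; since a disk has characteristic $1$ and the gluing circles have characteristic $0$, the $\mathcal M$-piece contributes $X_{\mathcal M}-Y_{\mathcal M}$. Each $\mathcal A$-piece is a disk (a pool) contributing $1$, accounting for the $+N_{\mathcal A}$ term, while the $\mathcal L$- and $\mathcal E$-pieces are annuli or M\"obius bands of characteristic $0$ and contribute nothing. Summing gives formula~(\ref{EjHar}). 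For part~(2), orientability of $S$ is equivalent to orientability of every piece together with orientation-compatibility of every gluing: by Lemma~\ref{ReBezPer}(2) each $\mathcal M$-piece is orientable exactly when $\Gamma_{\mathcal M}$ has no odd-length cycle, and a $\mathcal L$-piece is orientable (an annulus rather than a M\"obius band) exactly when its $\mathcal L$-vertex has degree $2$; I would check that when all pieces are orientable the seam gluings can be chosen coherently, so $S$ is orientable iff both listed conditions hold.

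The main obstacle I expect is the gluing-compatibility bookkeeping in part~(2): one must verify that the orientations induced on each cutting circle from the two sides can be matched globally without introducing a one-sided seam, i.e. that local orientability of all pieces really forces global orientability given the prescribed edge directions. This requires tracking the consistent-with-$\mathfrak c$ orientation on each $R_{\mathfrak c}$ through both the $\mathcal M$-side (via the oriented cycle of $\Gamma_{\mathcal M}$) and the $\mathcal L$-side, and confirming that no obstruction arises from the cyclic ordering of the $c$-edges; the realisation of the model flow on each $\mathcal M$-piece from Lemma~\ref{ReBezPer}, which already encodes this orientation data, is what makes the matching possible.
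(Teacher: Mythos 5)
Your construction and your treatment of part (1) are essentially the paper's own proof: the paper likewise builds a local model for each vertex ($\mathcal A$: a disk with a radial field; $\mathcal E$ and $\mathcal L$: an annulus or a M\"obius band; $\mathcal M$: the surface of Lemma \ref{ReBezPer} with disk neighbourhoods of the distinguished sinks and sources removed), glues according to the edges of $\Upsilon^*$, and obtains formula (\ref{EjHar}) by additivity of the Euler characteristic with exactly your accounting ($1$ per $\mathcal A$-piece, $0$ per $\mathcal E$- or $\mathcal L$-piece, $X_{\mathcal M}-Y_{\mathcal M}$ per $\mathcal M$-piece). The only structural difference is that the paper interposes a buffer annulus $A_e$ for every edge $e$, carrying an ``average of the boundaries'' vector field; that is what makes the flow well defined across the seams, so if you glue pieces directly you owe a sentence on how the two transversal fields are matched along each cutting circle, but this is routine.

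The genuine gap is in part (2), precisely at the step you flag as ``the main obstacle'' and then defer. The deferred claim --- that when all pieces are orientable the gluings can be chosen coherently, so that $S$ is orientable whenever the two listed conditions hold --- is not mere bookkeeping: as stated it fails, because coherence of the gluings is dictated by parts of the equipment (the $\mathcal E$-weights and the orientations of the equipping cycles) that the criterion of part (2) never mentions. Concretely, let $\Upsilon^*$ be the four-cycle with two $\mathcal L$-vertices of degree two and two $\mathcal E$-vertices, one of weight ``$+$'' and one of weight ``$-$''. This graph is admissible and satisfies both conditions of part (2); yet every flow realising it lies on the Klein bottle. Indeed, its four pieces are annuli glued in a cycle; an $\mathcal L$-piece transports the orientation induced by its limit cycle coherently from one boundary circle to the other, a ``$+$'' $\mathcal E$-piece does the same, and the single ``$-$'' piece forces exactly one orientation-reversing seam. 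Such a flow exists (take one attracting and one repelling two-sided cycle on $S^1\times[0,4]$ and identify the two ends by a reflection of the $S^1$ factor), and by Theorem \ref{Classif} any flow with this equipped graph is topologically equivalent to it. So orientability is governed by the parity of orientation reversals, read off from the $\mathcal E$-weights and the oriented $tu$-/$st$-cycles around each cycle of $\Upsilon^*$, not by orientability of the pieces alone. For what it is worth, the paper's own proof makes the same leap --- it simply asserts that $S$ is orientable if and only if all its elementary regions are orientable --- so your instinct about where the difficulty sits is sound; but neither your argument nor the paper's closes it, and closing it requires strengthening the hypothesis of part (2) itself.
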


\subsection{The algorithmic results}

An algorithm for solving the isomorphism problem
is considered to be \emph{efficient} if its working time is bounded by a polynomial on the length of the input data.
Algorithms of such kind are also called \emph{polynomial-time} or simply \emph{polynomial}. This commonly recognized
definition of efficient solvability rises to A. Cobham \cite{C64}. A common standard of intractability is
NP-completeness \cite{GJ79}. The complexity status of the
isomorphism problem is still unknown, i.e., for the class of all graphs,
neither its polynomial-time solvability nor its NP-completeness is proved at the moment. Fortunately, four-colour
graphs and directed graphs of flows are not graphs of the general type, as they can be embedded into a fixed surface on which
flows are defined, i.e. the ambient surface. That allows to prove the following theorems.

\begin{theorem} \label{isomorphism for equipped graphs} Isomorphism of the equipped graphs $\Upsilon^*_{\phi^t}$, $\Upsilon^*_{\phi'^t}$ of flows $\phi^t$, $\phi'^t\in G$ can be recognized in polynomial time.
\end{theorem}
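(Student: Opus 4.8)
The plan is to reduce the isomorphism problem for equipped graphs to a sequence of polynomial-time subproblems, exploiting the structural rigidity that comes from the embeddability of all the graphs into a fixed ambient surface. First I would observe that an isomorphism $\xi$ of equipped graphs $\Upsilon^*_{\phi^t}$ and $\Upsilon^*_{\phi'^t}$ must respect the vertex types $\mathcal A$, $\mathcal L$, $\mathcal E$, $\mathcal M$, since these are recorded combinatorially (by weights, by the equipped four-colour graphs, and by the degree and edge-direction data from Proposition~\ref{SvOrGr}). So the task splits into matching the ``skeleton'' directed graph $\Upsilon$ and then verifying that the equipment carried by each matched vertex and edge is preserved. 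The skeleton matching itself is highly constrained: by Proposition~\ref{SvOrGr}, $\mathcal E$-, $\mathcal A$- and $\mathcal M$-vertices attach only to $\mathcal L$-vertices, and each $\mathcal L$-vertex has degree at most $2$, so $\Upsilon$ is essentially a sparse graph of bounded local complexity whose isomorphisms can be enumerated or tested directly in polynomial time.

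The technical heart is the isomorphism test for the four-colour graphs $\Gamma_{\mathcal M}$ equipping the $\mathcal M$-vertices, together with the preservation of the oriented $tu$- and $st$-cycles attached to the incident edges. Here I would use the fact that $\Gamma_{\mathcal M}$ is admissible (Lemma~\ref{GmAdm}), hence corresponds via the construction of Subsection~\ref{Mvertex} to a flow $f^t$ on a closed surface $M$ into which the graph embeds. The key step is that each vertex of $\Gamma_{\mathcal M}$ has exactly one incident edge of each of the colours $s$, $t$, $u$ (property (2) of the admissible four-colour graph) and a cyclically ordered list of $c$-edges; this makes the graph \emph{rotation-equipped}, so that fixing the image of a single vertex together with the local rotation at it propagates deterministically to a candidate isomorphism of the whole connected component. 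Concretely, I would fix an edge-and-direction as a ``seed'', then trace the boundary walks of the polygonal regions using the colour-cyclic ordering of the $c$-edges and the forced $s,t,u$ incidences, checking at each step that colours, $c$-edge indices, and the $c^*$-, $tu$-, $st$-cycle structure match. Because the number of admissible seeds is bounded by the number of edges (each vertex sees one $s$-, $t$-, $u$-edge, and $c^*$-cycles have exactly four vertices by Definition~\ref{afcg}), there are only linearly many seeds to try, and each trace is linear, giving an overall polynomial bound.

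Finally I would assemble the pieces: for the global skeleton match I enumerate the polynomially many candidate bijections of the bounded-degree directed graph $\Upsilon$ respecting vertex types and edge directions; for each candidate I check the $\mathcal E$-weights (trivial), and for each matched pair of $\mathcal M$-vertices I run the four-colour-graph isomorphism test above, additionally verifying that the distinguished oriented $tu$-cycle (resp.\ $st$-cycle) labelling each incident $(\mathcal M,\mathcal L)$-edge (resp.\ $(\mathcal L,\mathcal M)$-edge) is carried to the correct oriented cycle on the other side, with orientations agreeing. The main obstacle I anticipate is controlling the combinatorics of the global search: although each local test is clearly polynomial, one must argue that the number of candidate skeleton isomorphisms that need full equipment-checking does not blow up. I would resolve this by using the bounded degree of $\mathcal L$-vertices together with the seed-propagation rigidity of the four-colour graphs to show that, once a seed in one $\mathcal M$-component is fixed, the rest of the matching is determined up to polynomially many choices, so the total running time stays polynomial in the number of vertices and edges of $\Upsilon^*_{\phi^t}$.
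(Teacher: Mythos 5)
Your local analysis of the four-colour graphs is correct and is actually a nice shortcut past the paper's machinery: since every vertex of $\Gamma_{\mathcal M}$ has exactly one incident $s$-, $t$- and $u$-edge and an indexed list of $c$-edges, and the isomorphism definition preserves colours and $c$-indices, an isomorphism $\Gamma_{\mathcal M}\to\Gamma_{\mathcal M'}$ is indeed determined by the image of a single vertex, so the four-colour sub-problem is testable in roughly $O(nm)$ time by seed propagation. The paper instead handles this sub-problem (Lemma \ref{isomorphism for four-colour graphs}) by encoding colours and $c$-indices with subdivision gadgets to get a simple graph and invoking Miller's bounded-genus isomorphism algorithm (Proposition \ref{Miller}); your rigidity argument is more elementary there.

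The genuine gap is in your global assembly step, and it is exactly the step where the paper's reliance on Miller's theorem does the real work. First, $\Upsilon$ is not of bounded degree: an $\mathcal M$-vertex carries one edge per cutting circle in the boundary of its region, which is unbounded; and even for sparse skeletons, ``enumerate the polynomially many candidate bijections respecting vertex types'' is false --- with many identical sub-flows attached symmetrically to one hub, the number of type-respecting bijections is factorial, so one needs an isomorphism \emph{algorithm}, not enumeration. Second, your proposed repair (seed-propagation rigidity pins the rest of the matching up to polynomially many choices) breaks down because the equipment of an $(\mathcal M,\mathcal L)$-edge is an oriented cycle with no distinguished base point: when the propagation reaches a new $\mathcal M$-vertex through a matched edge, the constraint only forces its cycle to map onto the corresponding cycle preserving orientation, leaving up to (cycle length) rotational choices of the four-colour isomorphism at that vertex; these choices are not independent (they determine how the remaining edges at that vertex are matched, and hence interact around cycles of the skeleton), so a backtracking search multiplies them over all $\mathcal M$-vertices and no polynomial bound follows from what you wrote. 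The paper avoids this entirely: it builds one simple graph $\Gamma^*_t$ encoding the whole equipped structure (pendant degree-one vertices mark $\mathcal A$-, $\mathcal L$-, $\mathcal E$-vertices and weights, subdivisions mark colours and orientations, and each $\mathcal L$-vertex is joined by edges to \emph{all} vertices of its associated cycle), proves via Lemma \ref{equipped graphs} that $\Gamma^*_t$ determines $\Upsilon^*_{\phi^t}$ uniquely, observes that $\Gamma^*_t$ embeds in the ambient surface, and then delegates the global combinatorics to Miller's $O(n^{O(g)})$ algorithm. Some such bounded-genus (or planar) graph-isomorphism result is needed to control the global search; your proposal contains no substitute for it, so as written the theorem is not proved.
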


\begin{theorem} \label{orientability and Euler characteristic} The orientability of the ambient surface $S$ for an $\Omega$-stable flow $\phi^t$ can be tested in a linear time and the Euler characteristic of $S$ can be determined in quadratic time by means of the equipped graph $\Upsilon^*_{\phi^t}$.
\end{theorem}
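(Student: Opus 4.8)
The plan is to reduce both assertions to purely combinatorial computations on the equipped graph $\Upsilon^*_{\phi^t}$ and then to invoke the criterion and the formula already supplied by Theorem \ref{Realiz}. By Lemma \ref{Dopust} the graph $\Upsilon^*_{\phi^t}$ is admissible, so Theorem \ref{Realiz} applies to it verbatim: its orientability is governed by item (2) and its Euler characteristic by formula \eqref{EjHar}. I would assume the input is presented by adjacency lists for the directed graph $\Upsilon$ together with, at each $\mathcal M$-vertex, adjacency lists for the four-colour graph $\Gamma_{\mathcal M}$ in which every vertex $b$ stores its unique $s$-, $t$- and $u$-edge and its ordered list $c^b_1,\dots,c^b_{n_b}$ of $c$-edges; the length of this data is $\Theta(V+E)$, where $V$ and $E$ count the vertices and edges of all graphs involved.

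First I would treat orientability. By Theorem \ref{Realiz}(2), $S$ is orientable if and only if (i) every $\mathcal L$-vertex is incident to exactly two edges, and (ii) no four-colour graph $\Gamma_{\mathcal M}$ contains a cycle of odd length. Condition (i) is checked by a single scan of the incidence lists of the $\mathcal L$-vertices. For (ii), observe that a cycle of odd length in the paper's sense (an edge-disjoint closed walk of odd length) exists precisely when some $\Gamma_{\mathcal M}$ fails to be bipartite: an odd closed trail decomposes into cycles, at least one of which is odd, and conversely an odd cycle is such a trail. Hence (ii) is exactly a bipartiteness test, carried out by a breadth-first two-colouring of each $\Gamma_{\mathcal M}$ in time linear in its size. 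Since the four-colour graphs occupy disjoint parts of the input, the total cost is linear, which gives the first assertion. Note that orientability never requires counting the intricate $c^*$-cycles, which is exactly why it stays linear.

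Next I would compute $\chi(S)$ from \eqref{EjHar}, that is $\chi(S)=\sum\limits_{\mathcal M}(X_{\mathcal M}-Y_{\mathcal M})+N_\mathcal A$. The term $N_\mathcal A$ and all the $Y_{\mathcal M}$ are obtained by one linear scan. The remaining work is to evaluate, for each $\mathcal M$-vertex, the value $X_{\mathcal M}=\nu_0-\nu_1+\nu_2$, where by \eqref{EjHarBezPer} the numbers $\nu_0,\nu_1,\nu_2$ are the counts of $tu$-, $c^*$- and $st$-cycles of $\Gamma_{\mathcal M}$. The $tu$- and $st$-counts are easy: since every vertex carries exactly one $t$-edge and exactly one $u$-edge, the subgraph of $\Gamma_{\mathcal M}$ spanned by the $t$- and $u$-edges is $2$-regular, hence a disjoint union of cycles, so $\nu_0$ is simply its number of connected components; likewise $\nu_2$ comes from the $s$- and $t$-edges. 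Both are linear.

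The heart of the matter, and the step I expect to be the main obstacle, is counting the $c^*$-cycles $\nu_1$, whose definition involves the ordering $c^b_1,\dots,c^b_{n_b}$ together with the nominal $c$-edges ($u$ and $s$ playing the roles of indices $0$ and $n_b+1$) and the \emph{alternating} rule: the index is raised by one at the even-numbered vertices of the cycle and lowered by one at the odd-numbered ones. The hard part will be to turn this alternating, position-dependent prescription into a local traversal. I would encode it as a successor permutation on the set of directed $c$-edges, each augmented by a parity bit recording whether the next turn increments or decrements the index: arriving at a vertex along $c^b_m$ with a given parity, one leaves along $c^b_{m+1}$ or $c^b_{m-1}$ accordingly and flips the parity, with $c^b_0$ and $c^b_{n_b+1}$ marking the boundary of the index range. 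The orbits of this permutation are then exactly the $c^*$-cycles, and since each dart lies in a single orbit, tracing every orbit once counts all of them. The delicate points to verify are that the successor map is well defined, that its orbits are edge-disjoint closed walks (hence cycles in the sense of this paper), and that the nominal-edge conventions correctly locate where a $c^*$-cycle closes up. Once this is in place the count is obtained comfortably within the claimed quadratic bound, and summing $X_{\mathcal M}-Y_{\mathcal M}$ over all $\mathcal M$-vertices and adding $N_\mathcal A$ yields $\chi(S)$, completing the proof.
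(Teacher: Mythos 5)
Your proposal is correct and follows essentially the same route as the paper: both reduce the two claims to Theorem \ref{Realiz}, test orientability via bipartiteness (K\"onig's theorem plus breadth-first search) of the four-colour graphs in linear time, and compute $\chi(S)$ from formula (\ref{EjHar}) by a linear count of $tu$- and $st$-cycles plus a count of $c^*$-cycles within the quadratic bound. Your dart-with-parity successor permutation for the $c^*$-cycles is just a cleaner (indeed linear-time) packaging of the paper's alternating-index traversal, which the paper implements as an edge-removal search inside the same quadratic budget.
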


\section{The dynamics of a flow $f^t\in G$ without limit cycles on a surface $M$}

In this section everywhere below $f^t\in G$ is a flow without limit cycles on a closed surface $M$. We give proofs for the results from Subsection \ref{Mvertex} and other results about flows without limit cycles. A part of them was proved in \cite{KrPoMit}, \cite{KruMaPoMS} and \cite{grin} but we repeat them for a completeness. 

\subsection{General properties}
Firstly let us give a necessary proposition, which we will use for the proof of the classification theorem.

\begin{proposition}[\cite{grin}, Theorem 2.1.1] \label{Wupodmn}
$ $

1) $M=\bigcup\limits_{p\in\Omega(f^t)} W^u_p=\bigcup\limits_{p\in\Omega(f^t)} W^s_p$;

2) $W^u_p$ $(W^s_p)$ is a smooth submanifold of $M$ diffeomorphic to $\mathbb R^i$ $(\mathbb R^{2-i})$ for every fixed point $p\in\Omega^i_{f^t}$.
\end{proposition}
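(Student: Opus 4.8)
The plan is to treat the two parts separately: part~1 is a statement about the asymptotic behaviour of \emph{every} trajectory, while part~2 is the local-to-global description of the invariant manifolds of the (finitely many, hyperbolic) fixed points. Throughout I would use that, since $f^t$ has no limit cycles, $\Omega_{f^t}$ is a finite set of hyperbolic fixed points, each of type sink, saddle or source by Proposition~\ref{LokSopr}, with $\dim W^u_p=i$ for $p\in\Omega^i_{f^t}$.

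For part~1 I would first observe that every $\omega$-limit point is non-wandering, so $\omega(x)\subseteq\Omega_{f^t}$ for each $x\in M$: if $y\in\omega(x)$ then the forward orbit of $x$ returns to every neighbourhood of $y$ at arbitrarily large times, which is precisely the recurrence defining a non-wandering point. Next I would invoke the standard facts that the $\omega$-limit set of a point of a flow on a compact manifold is non-empty, compact and connected. Since $\Omega_{f^t}$ is finite, hence discrete in the subspace topology, a connected subset of it is a single point, so $\omega(x)=\{p\}$ for some fixed point $p$; that is, $f^t(x)\to p$ as $t\to+\infty$ and $x\in W^s_p$. As $x$ was arbitrary, $M=\bigcup_{p}W^s_p$, and reversing time (the flow $f^{-t}$ has the same fixed points with $W^s$ and $W^u$ interchanged) gives $M=\bigcup_{p}W^u_p$. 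I would note that this argument silently disposes of nontrivial recurrence: a nontrivial recurrent trajectory would consist of non-wandering, non-fixed points, contradicting the finiteness of $\Omega_{f^t}$, so no appeal to the full Poincar\'e--Bendixson classification is actually required.

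For part~2 the local picture is supplied by the Hadamard--Perron stable manifold theorem at each hyperbolic $p\in\Omega^i_{f^t}$: in a neighbourhood of $p$ the local manifolds $W^u_{loc}(p)$ and $W^s_{loc}(p)$ are smooth embedded discs of dimensions $i$ and $2-i$, tangent at $p$ to the unstable and stable subspaces of the linearisation. The global manifolds are $W^u_p=\bigcup_{t\ge 0}f^t(W^u_{loc}(p))$ and $W^s_p=\bigcup_{t\le 0}f^t(W^s_{loc}(p))$, each an injectively immersed smooth submanifold, and I would identify the diffeomorphism type by cases. For a sink ($i=0$) the claim $W^u_p=\{p\}\cong\mathbb R^0$ is immediate; the source case ($i=2$) is symmetric under time reversal. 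For a saddle ($i=1$) the one-dimensional manifold $W^u_p$ is $p$ together with its two unstable separatrices, and part~1 guarantees that each separatrix converges to a single fixed point and is therefore an embedded ray rather than a self-accumulating orbit; parametrising the two rays and $p$ by $(-\infty,+\infty)$, smoothly across $p$ by the local theorem, exhibits $W^u_p\cong\mathbb R$, and dually $W^s_p\cong\mathbb R$.

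The hard part will be the two-dimensional case: showing that the basin $W^s_p$ of a hyperbolic sink is diffeomorphic to $\mathbb R^2$ (and, by time reversal, the corresponding statement for a source). Openness and immersedness are easy, but the global diffeomorphism type is not purely local. I would obtain it by constructing on the basin a smooth proper Lyapunov function $V$ with a unique nondegenerate minimum at $p$, strictly decreasing along nonconstant orbits; its regular sublevel sets are then nested discs whose increasing union exhausts the basin, and flowing the boundary circle of a small Lyapunov disc outward along $f^{-t}$ yields a diffeomorphism of $W^s_p$ onto $\mathbb R^2$. The delicate point is the properness and exhaustion of $V$ on the \emph{whole} basin, which is exactly where the absence of recurrence and of any other basic set inside the basin is used; once this is in place, the conclusion follows from the standard structure theory of gradient-like flows near an attractor.
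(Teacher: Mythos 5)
The paper itself contains no proof of this proposition: it is imported wholesale from the reference \cite{grin} (Theorem 2.1.1), so your argument can only be measured against the standard literature, not against anything in the text. Measured that way, your outline is essentially the classical one and is sound. Part 1 is exactly the ``$\omega$-limit sets are nonempty, compact, connected, and consist of non-wandering points, hence are single fixed points'' argument, which is valid here precisely because $\Omega_{f^t}$ is finite; your remark that this bypasses Poincar\'e--Bendixson is correct. For part 2, the sink/source ($i=0,2$ locally) and the one-dimensional saddle manifolds reduce to the local stable manifold theorem, and the genuinely global step --- that the basin $W^s_p$ of a hyperbolic sink is diffeomorphic to $\mathbb R^2$ --- is handled correctly by your transverse-circle construction. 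In fact the global proper Lyapunov function you worry about is not needed: take a small positively invariant Lyapunov disc $D$ around $p$; every orbit in $W^s_p\setminus\{p\}$ enters $D$ and, once inside, never leaves, while its backward orbit must leave $D$ (otherwise its $\alpha$-limit set would be $\{p\}$, forcing the point to lie in $W^u_p=\{p\}$), so each such orbit meets $\partial D$ exactly once, giving $W^s_p\setminus\{p\}\cong\mathbb S^1\times\mathbb R$ via $(x,s)\mapsto f^{-s}(x)$, and gluing back the disc yields the claim. So the ``delicate point'' you flag is automatic from the definition of the basin.

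One step as written does have a hole: in the saddle case you assert that part 1 makes each unstable separatrix ``an embedded ray rather than a self-accumulating orbit''. Part 1 only gives that the separatrix tends to a single fixed point $q$; it does not exclude $q=p$ itself, i.e.\ a homoclinic loop, in which case $W^u_p$ would still be an injectively immersed line but would accumulate on itself near $p$ and fail to be an embedded submanifold. What rules this out is not part 1 but the $\Omega$-stability hypothesis: as recorded in Section 2 of the paper via the criterion of \cite{Pugh}, saddle points of an $\Omega$-stable flow organize no cycles, and a homoclinic loop is exactly a cycle with $k=1$. With that one-line citation inserted, your sketch is complete in outline and matches the standard proof of the cited theorem.
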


Let $p$ be a fixed point of $f^t$. Let us denote by $l^u_p$ ($l^s_p$) the unstable (stable) separatrix of $p$.

\begin{lemma}\label{SdljaS} For every sink $\omega$ (source $\alpha$) of $f^t$ there is at least one saddle point $\sigma$ with an unstable separatrix $l^u_{\sigma}$ (a stable separatrix $l^s_{\sigma}$) such that $cl(l^u_\sigma)\backslash (l^u_{\sigma})=\{\sigma,\omega\}$ $(cl(l^s_\sigma)\backslash (l^s_{\sigma})=\{\sigma,\alpha\})$.
\end{lemma}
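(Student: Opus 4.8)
The plan is to prove the statement for sinks; the case of sources follows by applying the same argument to the time-reversed flow $f^{-t}$, which turns sinks into sources and unstable separatrices into stable ones. Let $\omega$ be a sink of $f^t$. The key idea is to use Proposition \ref{Wupodmn}, according to which $M=\bigcup_{p\in\Omega(f^t)} W^u_p$; in particular, the basin $W^s_\omega$, which is an open neighbourhood of $\omega$ diffeomorphic to $\mathbb R^2$ by Proposition \ref{Wupodmn}, must be covered by unstable manifolds of non-wandering points.

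First I would examine which unstable manifolds can intersect $W^s_\omega$. A point $x\in W^s_\omega$ satisfies $\lim_{t\to+\infty}f^t(x)=\omega$. If $x\in W^u_p$ for some $p\in\Omega(f^t)$, then the whole forward trajectory of $x$ stays in $W^u_p$ (since $W^u_p$ is flow-invariant), and $\omega\in cl(W^u_p)$. If $p$ is a source, then $W^u_p$ is open (diffeomorphic to $\mathbb R^2$) and coincides with the basin of repulsion of $p$; I would argue that $W^u_p\ne W^s_\omega$ in general, so $W^s_\omega$ cannot be covered by source-basins alone together with the single point $\omega$. More precisely, the punctured basin $W^s_\omega\setminus\{\omega\}$ is connected and two-dimensional, while each source contributes an open set and each sink contributes just a point; since saddles are the only remaining type of non-wandering point, at least one saddle separatrix must meet $W^s_\omega\setminus\{\omega\}$.

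The heart of the argument is then the following dichotomy for a saddle $\sigma$. By Proposition \ref{Wupodmn}, $W^u_\sigma$ is diffeomorphic to $\mathbb R^1$, i.e. it is $\sigma$ together with its two unstable separatrices $l^u_\sigma$. Suppose $W^u_\sigma\cap W^s_\omega\ne\emptyset$; I would show a single unstable separatrix $l^u_\sigma$ lies entirely in $W^s_\omega$ and has $\omega$ as its forward limit. Indeed, a separatrix is a single trajectory, so it is contained in exactly one stable basin; hence if one point of $l^u_\sigma$ lies in $W^s_\omega$ then all of $l^u_\sigma$ does, giving $\lim_{t\to+\infty}f^t(x)=\omega$ for $x\in l^u_\sigma$, i.e. $\omega\in cl(l^u_\sigma)$. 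The backward limit of $l^u_\sigma$ is $\sigma$ by definition of the unstable separatrix. Since $\Omega$-stability forbids saddle cycles, and in particular forbids a separatrix from being a saddle connection on both ends, the closure $cl(l^u_\sigma)$ adds exactly the two endpoints, so $cl(l^u_\sigma)\setminus l^u_\sigma=\{\sigma,\omega\}$.

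The existence step is where I expect the main obstacle. I must guarantee that some saddle separatrix actually accumulates on $\omega$, rather than every nearby unstable manifold belonging to a source. The clean way is a counting/covering argument near $\omega$: take a small circle $C$ around $\omega$ transverse to the flow inside $W^s_\omega$; every point of $C$ lies on some $W^u_p$ by Proposition \ref{Wupodmn}. The set of points of $C$ lying on source-basins is open, and the set lying on saddle unstable manifolds is where the separatrices cross $C$; if no saddle separatrix reached $\omega$, then $C$ would be covered by finitely many disjoint open arcs coming from source-basins, forcing two distinct source-basins to share a boundary trajectory through $\omega$ with no saddle between them, which contradicts the local hyperbolic structure of the sink (the basin $W^s_\omega$ is a disk whose boundary in $M$ is a union of separatrices and saddles). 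Thus at least one saddle separatrix must limit on $\omega$, completing the proof. I would make this covering-and-boundary argument precise using the local normal forms of Proposition \ref{LokSopr} near $\omega$ and near each source; that localisation is the technically delicate part.
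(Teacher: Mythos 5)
Your first half (the dichotomy) is fine: if some saddle's unstable manifold meets $W^s_\omega$, then by invariance a whole unstable separatrix lies in $W^s_\omega$, its forward limit is $\omega$, its backward limit is $\sigma$, and $cl(l^u_\sigma)\setminus l^u_\sigma=\{\sigma,\omega\}$ (the appeal to $\Omega$-stability here is unnecessary). The gap is exactly where you flagged it: the existence step, and the argument you sketch there does not close. First, the combinatorics of your covering is off: a circle is connected, so it cannot be partitioned into two or more disjoint nonempty open arcs; under your assumption the transverse circle $C$ is either not covered by source basins at all (and then a leftover point lies on a saddle's unstable manifold by item 1) of Proposition \ref{Wupodmn}, giving the conclusion directly, not via ``two basins sharing a boundary trajectory''), or $C$ is contained in a \emph{single} source basin $W^u_\alpha$, and hence, by invariance, $W^s_\omega\setminus\{\omega\}\subset W^u_\alpha$. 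In this last case there is \emph{no} contradiction with ``the local hyperbolic structure of the sink'': this configuration is exactly the north--south flow on $S^2$, where it is realized with a perfectly hyperbolic sink and where your parenthetical claim that $\partial W^s_\omega$ is a union of separatrices and saddles is false ($\partial W^s_\omega=\{\alpha\}$). That claim is, moreover, an unproved assertion of essentially the same depth as the lemma itself.

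The deeper point is that no purely local argument near $\omega$ can work, because the conclusion of Lemma \ref{SdljaS} is simply false for the north--south flow on the sphere; any correct proof must use the global hypothesis that $f^t$, being a flow of class $G$ without limit cycles, has at least one saddle point somewhere on $M$ --- a hypothesis you never invoke. The paper's proof supplies precisely this global step: assuming no unstable separatrix limits to $\omega$, it first gets $cl(W^s_\omega)=W^s_\omega\cup\{\alpha_1,\dots,\alpha_k\}$ with all $\alpha_i$ sources, then shows $W^u_{\alpha_i}\subset cl(W^s_\omega)$ by a path argument inside $W^u_{\alpha_i}\setminus\{\alpha_i\}\cong\mathbb R^2\setminus\{O\}$ (a path from a point of $W^s_\omega$ to a hypothetical point outside would have to cross $\partial W^s_\omega$ at a source lying in $W^u_{\alpha_i}$, which is impossible), concludes that $cl(W^s_\omega)$ is open as well as closed and hence equals $M$ by connectedness, and only then derives the contradiction: the flow would have no saddle points, contrary to the definition of the class $G$. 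To repair your proposal you must replace the ``local hyperbolic structure'' claim by this (or an equivalent) global connectedness argument and explicitly use the existence of a saddle on $M$.
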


\begin{proof} Supposing the contrary for some sink $\omega$, we get by the item 1) of Proposition \ref{Wupodmn} that $cl(W^s_\omega)=W^s_\omega\cup\bigcup\limits_{i=1}^k\{\alpha_i\}$, where $\alpha_i, i=\overline{1,k}$ is a source such that $W^u_{\alpha_i}\cap W^s_\omega\not=\emptyset$. Let us show that $W^u_{\alpha_i}\subset cl(W^s_\omega)$.

Let us assume the contrary. Then, by the item 1) of Proposition \ref{Wupodmn}, there is a point $p\in\Omega_{f^t}$ such that $p\not=\omega$ and $W^s_p\cap W^u_{\alpha_i}\not=\emptyset$. Let $x_\omega$ and $x_p$ be points such that $x_\omega\in W^u_{\alpha_i}\cap W^s_\omega$ and $x_p\in W^u_{\alpha_i}\cap W^s_p$. As the manifold $W^u_{\alpha_i}\backslash\{\alpha_i\}$ is homeomorphic to $\mathbb R^2\backslash\{O\}$ by the item 2) of Proposition \ref{Wupodmn}, then there is a simple path $c\colon [0,1]\to(W^u_{\alpha_i}\backslash\{\alpha_i\})$ connecting $x_\omega=c(0)$ with $x_p=c(1)$. Then, there is a value $\tau\in(0,1)$ such that $c(\tau)\notin W^s_\omega$ and $c(t)\in W^s_\omega$ for $t<\tau$. Consequently, there is a point $r\in\Omega_{f^t}$ such that $r\not=\omega$ and $c(\tau)\in W^s_r$. Besides, the point $c(\tau)\in cl(W^s_\omega)$. But if $c(\tau)\in cl(W^s_\omega)$, then $c(\tau)=\alpha_{i_0}$ for some $i_0\in\{1,\dots,k\}$ and, consequently, $\alpha_{i_0}\in W^u_{\alpha_i}$ that is the contradiction with the definition of the unstable manifold of a fixed point.

We have got that $W^u_{\alpha_i}\subset W^s_\omega$ for any $i=\overline{1,k}$ and, hence, the set $cl(W^s_\omega)$ is open as it contains every point with some open neighbourhood. As $cl(W^s_\omega)$ is both open and closed, $cl(W^s_\omega)=M$. Then $\Omega_{f^t}$ does not contain saddle points, that contradicts with conditions of the lemma.

The affirmation for sources can be proved by conversion from $f^t$ to $f^{-t}$. 
\end{proof}

\begin{lemma}\label{ZamykanieBezVsego}
Let $p$ be a fixed point of $f^t$. Then

$(i)$ if $p\in\Omega^1_{f^t}$, then 
\begin{equation*}
cl(l^u_p)\backslash(l^u_p\cup\{p\})=\begin{cases}
\{\sigma\}\subset\Omega^1_{f^t} &\text{and  $\,\,\,\,\,\,l^u_p=l^s_\sigma$,}\\
\{\omega\}\subset\Omega^0_{f^t} &\text{and $\,\,\,\,\,\,l^u_p\subset W^s_\omega$;}
\end{cases}
\end{equation*}

$(ii)$ if $p\in\Omega^2_{f^t}$, then
$cl(l^u_p)\backslash(l^u_p\cup\{p\})=\bigcup\limits_{\sigma\in\Omega_p}cl(l^u_\sigma)$, where $\Omega_p$ is a non-empty subset of $\Omega^1_{f^t}$.
\end{lemma}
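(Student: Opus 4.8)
The plan is to read off everything from the limiting behaviour of single orbits, the basic tool being Proposition~\ref{Wupodmn}(1): since $M=\bigcup_{p}W^u_p=\bigcup_{p}W^s_p$, every orbit of $f^t$ has its backward and forward limit equal to a single hyperbolic fixed point, so there is no non-trivial recurrence. For item $(i)$ I would treat the unstable separatrix $l^u_p$ of the saddle $p$ as one orbit, tending to $p$ as $t\to-\infty$; its forward limit is a single fixed point $q$ by the above, whence $cl(l^u_p)=l^u_p\cup\{p,q\}$ and $cl(l^u_p)\setminus(l^u_p\cup\{p\})=\{q\}$. First I would exclude $q$ being a source (nothing tends to a source in forward time, as the stable manifold of a source is the point itself) and exclude $q=p$: the latter would give $l^u_p\subset W^s_p\cap W^u_p$, i.e. a homoclinic loop, which is a saddle cycle of length one and is forbidden by the $\Omega$-stability (no-cycle) criterion stated after Proposition~\ref{LokSopr}. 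Hence $q$ is either a sink $\omega$, giving $l^u_p\subset W^s_\omega$ and the second alternative, or a saddle $\sigma\ne p$. In the saddle case $l^u_p$ is a whole orbit in $W^s_\sigma$ tending to $\sigma$, and since $W^s_\sigma$ is one-dimensional its only orbits with forward limit $\sigma$ are the two stable separatrices; thus $l^u_p$ coincides with one of them, i.e. $l^u_p=l^s_\sigma$, which is the first alternative.

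For item $(ii)$, where $p$ is a source and $l^u_p=W^u_p\setminus\{p\}$ is an open disc minus its centre, the quantity in question is exactly the frontier $\partial W^u_p=cl(W^u_p)\setminus W^u_p$. I would first describe this frontier abstractly: it is closed, invariant and disjoint from the open set $W^u_p$. No source other than $p$ can lie in it, because a source has a full punctured neighbourhood inside its own (open, disjoint) unstable manifold, so points of $W^u_p$ cannot accumulate there; and a non-trivial orbit $\gamma\subset\partial W^u_p$ cannot have its backward limit at $p$ (else $\gamma\subset W^u_p$), at a sink (whose unstable manifold is a point), or at another source (else $\gamma\subset W^u_{\mathrm{source}}$, an open set disjoint from $W^u_p$, contradicting $\gamma\subset cl(W^u_p)$). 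Hence the backward limit of such a $\gamma$ is a saddle $\sigma$ and, $W^u_\sigma$ being one-dimensional, $\gamma=l^u_\sigma$. Consequently $\partial W^u_p$ is a union of saddles, sinks and unstable separatrices $l^u_\sigma$; and, being closed and invariant, it contains $cl(l^u_\sigma)$ whenever it contains $l^u_\sigma$.

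It remains to check that every fixed point of $\partial W^u_p$ actually lies on the closure of some boundary separatrix, so that $\partial W^u_p=\bigcup_{\sigma\in\Omega_p}cl(l^u_\sigma)$ with $\Omega_p=\{\sigma\in\Omega^1_{f^t}:l^u_\sigma\subset\partial W^u_p\}$; this local bookkeeping is where I expect the real work to be. For a saddle $\sigma\in\partial W^u_p$ I would use the local hyperbolic picture: since $W^u_p$ is open and accumulates at $\sigma$ while the two separatrix lines through $\sigma$ are one-dimensional, $W^u_p$ must meet an open quadrant at $\sigma$, and by invariance the orbits of these points shadow the adjacent unstable separatrix, so $l^u_\sigma\subset cl(W^u_p)$ and hence $l^u_\sigma\subset\partial W^u_p$, i.e. $\sigma\in\Omega_p$. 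For a sink $\omega\in\partial W^u_p$ I would take a small transversal circle $S_\omega$ encircling $\omega$ inside its basin; then $W^u_p\cap S_\omega$ is a non-empty proper open subarc of $S_\omega$ (non-empty since $\omega\in cl(W^u_p)$ and $W^u_p$ is invariant, proper because by Lemma~\ref{SdljaS} some separatrix tends to $\omega$ and crosses $S_\omega$ in a point off $W^u_p$), so it has a boundary point $y\in S_\omega\cap\partial W^u_p$; the orbit of $y$ is a non-trivial orbit in $\partial W^u_p$ with $\omega$ as forward limit, hence equals some $l^u_\sigma$ with $\sigma\in\Omega_p$ and $\omega\in cl(l^u_\sigma)$. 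Finally $\Omega_p\ne\emptyset$: Lemma~\ref{SdljaS} applied to the source $p$ produces a saddle $\sigma$ with $l^s_\sigma\subset W^u_p$ tending to $p$, whence $\sigma\in cl(l^s_\sigma)\subset cl(W^u_p)$ and $\sigma\ne p$, so $\sigma\in\partial W^u_p$ and the saddle argument places $\sigma$ in $\Omega_p$. Combining the two inclusions yields the claimed equality, completing item $(ii)$.
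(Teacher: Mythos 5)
Your proposal is correct and follows essentially the same route as the paper: item $(i)$ is the same case analysis (via Proposition \ref{Wupodmn}) on the type of fixed point whose stable manifold absorbs $l^u_p$, and item $(ii)$ decomposes the frontier of $W^u_p$ into saddles, sinks and unstable separatrices, handling saddles by the local hyperbolic picture and sinks through Lemma \ref{SdljaS}, exactly as the paper does. The only differences are cosmetic: you use a transversal circle around the sink where the paper uses the sectors of $W^s_\omega$ cut out by the incoming separatrices, and you explicitly verify the non-emptiness of $\Omega_p$ and exclude homoclinic loops, two points the paper leaves implicit.
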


\begin{proof}
Consider the case $(i)$, where $p$ is a saddle point. Let $x\in cl(l^u_p)$. Any point of $l^u_p$ is a point of $W^s_r$ for some fixed point $r$ by the item 1) of Proposition \ref{Wupodmn}. The point $r$ can be: a) a sink; b) a saddle point; c) a source.

a) Let us consider a sink $r=\omega$ such that $x\in W^s_\omega$. As $\omega$ is the source and $l^u_p=\mathcal O_x$, where $\mathcal O_x$ is the orbit of $x$, then $l^u_p\subset W^s_\omega$. So $cl(l^u_p)\backslash (l^u_p\cup\{p\})=\{\omega\}$.

b) Let us consider a saddle point $r=\sigma$ such that $x\in W^s_\sigma$. In this case $l^u_p=l^s_\sigma$. So $cl(l^u_p)\backslash(l^u_p\cup\{p\})=\{\sigma\}$.

c) Assume that there is a source $r=\alpha$ such that $x\in W^s_\alpha$. As $W^s_\alpha=\alpha$, then $\alpha\in l^u_p$, which is impossible because $l^u_p$ consists of wandering points. Consequently, the case c) is impossible.

Consider the case $(ii)$: $p=\alpha$ is a source.

The item 1) of Proposition \ref{Wupodmn} says that the set $A=cl(l^u_\alpha)\backslash(l^u_\alpha\cup\{\alpha\})$ is $f^t$-invariant subset of $W^u_{\Omega^1_{f^t}}\cup\Omega^0_{f^t}$. Then, for the proof of our Lemma all we need is to prove that a) if $\sigma\in A$ for some $\sigma\in\Omega^1_{f^t}$, then $l^u_\sigma\subset A$ and b) if $\omega\in A$ for some $\omega\in\Omega^0_{f^t}$, then there is $\sigma\in\Omega^1_{f^t}$ such that $\omega\in cl(l^u_\sigma)$ and $l^u_\sigma\subset A$.

a) As $\sigma\in A$, there is a sequence $x_n\in (l^u_\alpha\setminus W^s_\sigma)$ such that $x_n\to\sigma$ for $n\to\infty$. Then $\mathcal O_{x_n}\subset l^u_\alpha$ and, due to the known behaviour of our flow near $\sigma$ (see, Proposition \ref{LokSopr}), the set $\bigcup\limits_{n\in\mathbb N} \mathcal O_{x_n}$ contains in its closure the separatrix $l^u_\sigma$.

b) Let $\omega\in A$. According to Lemma \ref{SdljaS} there is a finite set of saddle points $\sigma_1,\dots,\sigma_k\in\Omega^1_{f^t}$ such that $\omega\in cl(l^u_{\sigma_i})$ for $i=\overline{1,k}$. Then the set $W^s_\omega\backslash(\omega\cup\bigcup\limits_{i=1}^kl^u_{\sigma_i})$ consists of a finite number of connected components, at least one of them belongs to $l^u_\alpha$, denote it by $Q$. Thus there is at least one saddle point $\sigma_{i_0},i_0\in\{1,\dots,k\}$ whose separatrices $l^u_{\sigma_{i_0}}$ belongs to $cl(Q)$. Thus $l^u_{\sigma_{i_0}}\subset A$.
\end{proof}

The statement similar to Lemma \ref{ZamykanieBezVsego} may be proved for the stable separatrices of the flow $f^t$.

\subsection{The proof for Lemma \ref{1ist1st}}

We remind that a cell $J$ is a connected component of the set $\tilde M=M\backslash (cl(W^u_{\Omega^1_{f^t}})\cup cl(W^s_{\Omega^1_{f^t}}))$.

Due to Proposition \ref{Wupodmn} 
\begin{equation*}
\tilde M=(\bigcup\limits_{\alpha\in\Omega^2_{f^t}} l^u_\alpha)\backslash (\bigcup\limits_{\sigma\in\Omega^1_{f^t}} l^s_\sigma).
\end{equation*} 
Then every connected component $J$ of $\tilde M$ is a subset of $l^u_\alpha$ for a source $\alpha$. Similarly  
\begin{equation*}
\tilde M=(\bigcup\limits_{\omega\in\Omega^0_{f^t}} l^s_\omega)\backslash (\bigcup\limits_{\sigma\in\Omega^1_{f^t}} l^u_\alpha).
\end{equation*} 
Then every connected component $J$ of $\tilde M$ is a subset of $l^s_\omega$ for a sink $\omega$. Thus 
\begin{equation*}
J\subset(W^u_\alpha\cap W^s_\omega)
\end{equation*} 
and, consequently, the cell $J$ is a union of trajectories going from $\alpha$ to $\omega$.

\begin{flushright}$\square$\end{flushright}

\subsection{The proof for Lemma \ref{vidmnob}}

We remind that we choose a one trajectory $\theta_J$ in the cell $J$ and called it by a $t$-curve. Also we defined $\mathcal T=\bigcup\limits_{J\subset\tilde M}\theta_J,\,\,\bar M=\tilde{M}\backslash\mathcal T$. Besides, we called by a $c$-curve a separatrix connecting saddle points (``connection''), by a $u$-curve an unstable saddle separatrix with a sink in its closure, by a $s$-curve a stable saddle separatrix with a source in its closure. A {\it polygonal region $\Delta$} is the closure of a connecting component of $\bar M$.

Due to Lemma \ref{1ist1st} every cell $J$ belongs to the basin of the source $\alpha$ and, due to Lemma \ref{ZamykanieBezVsego}, $J$ is situated in $W^u_\alpha$ between too (may be coincident) $s$-curves. A polygonal region $\Delta$ can be created by removal a $t$-curve from $J$. As $W^u_\alpha$ is homeomorphic to $\mathbb R^2$, due to Proposiition \ref{Wupodmn}, then $\Delta$ is homeomorphic to a sector in $\mathbb R^2$, i.e. $\Delta$ is homeomorphic to an open disk. By construction, the boundary of $\Delta$ contains unique $s$-curve and unique $t$-curve. As $\Delta$ belongs to the basin of the sink $\omega$ in the same time, then it is restricted by unique $u$-curve. By $(ii)$ of Lemma \ref{ZamykanieBezVsego} the region $\Delta$ is restricted by a finite number of $c$-curves. We have got that the only possible structure of the boundary of a polygonal region $\Delta$ is the structure depicted on Figure \ref{MnogObl} up to a number of the $c$-curves.

\begin{flushright}$\square$\end{flushright}

\subsection{The proof of Lemma \ref{GmAdm}}

We remind that $\pi_{f^t}$ is the one-to-one correspondence between polygonal regions and vertices, also between colour curves of $f^t$ and colour edges of $\Gamma_{\mathcal M}$ respectively.

As $f^t$ given on the surface $M$ and every vertex of $\Gamma_{\mathcal M}$ corresponds to some its polygonal region, then, we can create a graph isomorphic to $\Gamma_{\mathcal M}$ with each vertex in its own polygonal region and with edges that are curves embedded in $M$, joining the vertices and crossing each its side at the unique point. Such graph is obviously isomorphic to $\Gamma_{\mathcal M}$. Therefore, without loss of generality, let's mean that $\Gamma_{\mathcal M}$ is embedded in $M$. As every polygonal region side adjoins to exactly two different polygonal regions, then $\Gamma_{\mathcal M}$ has not cycles of length $2$, i.e. $\Gamma_{\mathcal M}$ is simple one.

As to each point $p\in\Omega_{f^t}$ a finite number of polygonal regions divided by colour curves adjoins, then the point $p$ by means $\pi_{f^t}$ one-to-one corresponds to a cycle of the vertices corresponding to the regions adjoining to $p$ and of the colour edges crossing colour curves exiting out of $p$. So exactly $4$ polygonal regions divided by $u$-, $s$- or $c$-curves adjoin to a saddle point. If to mean $u$- and $s$-edges as nominal $c$-edges, we get that every saddle point corresponds to the $c^*$-cycle of $\Gamma_{\mathcal M}$. Conversely also is correct, because every $c^*$-cycle can be placed in a neighbourhood of the single saddle point so that such neighbourhoods of different $c^*$-cycles doesn't cross one another. In this way $\Gamma_{\mathcal M}$ contains $c^*$-cycles and each such cycle has length $4$. Consequently $\Gamma_{\mathcal M}$ is admissible.

\begin{flushright}$\square$\end{flushright}

\subsection{The proof of Proposition \ref{TochkiICikly}}

The correspondence between $\Omega^1_{f^t}$ and the set of $c^*$-cycles follows from the proof of Lemma \ref{GmAdm}. The basin of every sink  $\omega$ is divided by $u$- and $t$-curves alternately lying in $W^s_\omega$. Consequently $\omega$ corresponds to unique $tu$-cycle of $\Gamma_{\mathcal M}$ by means $\pi_{f^t}$. Conversely it is also corrected because as basins of different sinks are divided by $s$ and $c$-curves then each $tu$-cycle can be situated in the basin of the unique sink. In this way $\pi_{f^t}$ creates one-to-one corresponding between the set $\Omega^0_{f^t}$ and the set of $tu$-cycles. The correspondence between $\Omega^2_{f^t}$ and the set of $st$-cycles can be proved similarly.

\begin{flushright}$\square$\end{flushright}

\section{The proof for the classification Theorem \ref{Classif}}

In this section we consider $\Omega$-stable flow $\phi^t\in G$ on closed surface $S$ and prove that the isomorphic class of its equipped graph $\Upsilon^*_{\phi^t}$ is a complete topological invariant. 

\subsection{The necessary condition of Theorem \ref{Classif}}
Let two $\Omega$-stable flows $\phi^t,\,\phi'^t\in G$ given on a closed surface $S$ be topological equivalent, i.e. there is a homeomorphism $h\colon S\to S$ mapping trajectories of $\phi^t$ to trajectories of $\phi'^t$. Let us think without loss of generality that the cutting set $\mathcal R'$ of $\phi'^t$ is created so that $\mathcal R'=h(\mathcal R)$, where $\mathcal R$ is the cutting set of $\phi^t$. Also we can think that the restriction $\mathcal T'$ of the set of $t$-curves of $\phi'^t$ to the $\mathcal M$-regions of $\phi'^t$ is created so that $\mathcal T'=h(\mathcal T)$, where $\mathcal T$ is the restriction of the set of $t$-curves of $\phi^t$ to the $\mathcal M$-regions of $\phi^t$. Then $h$ maps the elementary and the polygonal regions of $\phi^t$ to the elementary and the polygonal regions of $\phi'^t$ respectively. 

Recall that $\pi^*_{\phi^t}$ is the one-to-one correspondence between the elementary regions and the vertices, the cutting circles and the edges, the directions of the trajectories and the directions of the edges, the consistencies of the orientations of the limit circles for the $\mathcal E$-regions and the weights of the $\mathcal E$-vertices, the $\mathcal M$-regions and the four-colour graphs, the stable limit cycles and the $tu$-cycles, the unstable limit cycles and the $st$-cycles respectively. 
Let us define the isomorphism $\xi\colon\Upsilon^*_{\phi^t}\to\Upsilon^*_{\phi'^t}$ by the formula 
\begin{equation*}
\xi=\pi^*_{\phi'^t}h(\pi^*_{\phi^t})^{-1}.
\end{equation*} 
As $h$ carries out the topological equivalence of $\phi^t$ and $\phi'^t$ then it preserves the types of elementary regions and, hence, $\xi$ preserves the types of the vertices. As $h$ preserves the orientation on the trajectories then the weights of vertices $\mathcal E$ and $\xi(\mathcal E)$ are equal. 

Let $\Gamma_{\mathcal M}$ is the four-colour graph for some vertex $\mathcal M$, $\Gamma_{\xi(\mathcal M)}$ is the four-colour graph for the vertex $\mathcal M'=\xi(\mathcal M)$. Recall that $\phi^t|_{\mathcal M}=f^t|_{\mathcal M}$, $\phi'^t|_{\mathcal M'}=f'^t|_{\mathcal M'}$ and $\pi_{f^t}$, $\pi_{f'^t}$ is the one-to-one correspondence between the polygonal regions and the vertices, also between the colour curves of $f^t$, $f'^t$ and the colour edges of the four-colour graph $\Gamma_{\mathcal M}$, $\Gamma_{\mathcal M'}$ respectively.  

As $\Gamma_{\mathcal M}$ is the four-colour graph of the region $\mathcal M$, then $\Gamma_{\mathcal M}=\pi^*_{\phi^t}(\mathcal M)$. Let $\Gamma_{\mathcal M'}=\Gamma_{\xi(\mathcal M)}=\pi^*_{\phi'^t}(h(\mathcal M))$. As $h$ maps the polygonal regions of $f^t$ to the polygonal regions of $f'^t$, then there exists the isomorphism $\psi\colon\Gamma_{\mathcal M}\to\Gamma_{\mathcal M'}$ defined by the formula 
\begin{equation*}
\psi_{\mathcal M}=\pi_{f'^t}h\pi^{-1}_{f^t}.
\end{equation*} 
As $\mathcal R'=h(\mathcal R)$, then $\psi(\tau_{_{\mathcal M,\mathcal L}})=\tau_{_{\xi(\mathcal M),\xi(\mathcal L)}}$ and the orientations of $\psi_{\mathcal M}(\tau_{_{\mathcal M, \mathcal L}})$ and $\tau_{_{\xi(\mathcal M),\xi(\mathcal L)}}$ coincide (similarly for $\tau_{_{\mathcal L, \mathcal M}}$). Thus $\xi$ is the required isomorphism. 

\subsection{The sufficient condition of Theorem \ref{Classif}}
Let graphs $\Upsilon^*_{\phi^t}$ and $\Upsilon^*_{\phi'^t}$ be isomorphic by means of $\xi$. To prove the topological equivalence of the flows we need to create homeomorphisms between elementary regions  mapping the trajectories of $\phi^t$ to the trajectories of $\phi'^t$ so that for two elementary regions the homeomorphisms on their common boundaries coincide.

{\bf I. $\mathcal M$-region.} Let us consider some $\mathcal M$-region of the flow $\phi^t$. Consider the region 
\begin{equation*}
\mathcal M'=(\pi^*_{\phi'^t})^{-1}\xi\pi^*_{\phi^t}(\mathcal M)
\end{equation*} 
of the flow $\phi'^t$. Their four-colour graphs $\Gamma_{\mathcal M}$ and $\Gamma_{\mathcal M'}$ are isomorphic by means of $\psi$. Let $f^t\colon M\to M$ ($f'^t\colon M'\to M'$) be the flow corresponding to $\Gamma_{\mathcal M}$ ($\Gamma_{\mathcal M'}$). Recall that in Subsection \ref{Mvertex} we defined the flow $f^t$ ($f'^t$) on surface $M$ ($M'$) such that $M\cap S=\mathcal M$ ($M'\cap S=\mathcal M'$) and $\phi^t|_{\mathcal M}=f^t|_{\mathcal M}$ ($\phi'^t|_{\mathcal M'}=f'^t|_{\mathcal M'}$).

Consider a polygonal region $\Delta\in\Delta_{f^t}$. The $\Delta$'s boundary contains an unique source $\alpha$, an unique sink $\omega$ and $n$ saddle points $\sigma_1,\sigma_2,\dots,\sigma_n,\,n\in\mathbb N$, and the saddle points are ordered so that their labels increase while moving along the $\Delta$'s boundary according to the direction from the source to the sink on the $t$-curve. Consider the polygonal region $\Delta'\in\Delta_{f'^t}$ such that 
\begin{equation*}
\Delta'=\pi^{-1}_{f'^t}\psi\pi_{f^t}(\Delta).
\end{equation*} 
The isomorphism $\psi$ provides an equal number of the same-colour edges exiting out of graph vertices corresponding to $\Delta$ and $\Delta'$. It implies that $\Delta'$'s boundary contains exactly an unique source $\alpha'$, an unique sink $\omega'$ and $n$ saddle points $\sigma'_1,\sigma'_2,\dots\sigma'_n$ ordered so that their labels increase while moving along the $\Delta'$'s boundary according to the direction from the source to the sink on the $t$-curve. As $\psi$ preserves the colours of the edges and the numbers of the $c$-edges, then for creation of the homeomorphism $h_M\colon M\to M'$ doing the topological equivalence of $f^t$ and $f'^t$ all we need is to construct the homeomorphism $h_\Delta\colon cl(\Delta)\to cl(\Delta')$ mapping the trajectories of $f^t$ from $cl(\Delta)$ to the trajectories of $f'^t$ from $cl(\Delta')$ so that 
\begin{equation*}
h_\Delta|_{cl(\Delta)\cap cl(\tilde\Delta)}=h_{\tilde\Delta}|_{cl(\Delta)\cap cl(\tilde\Delta)}
\end{equation*} 
for any polygonal regions $\Delta$, $\tilde\Delta$ of $f^t$.

{\bf Step 1.} Let us construct $h_\Delta$ in neighbourhoods of the node points. Let 
\begin{equation*}
u=\{(x,y)\in\mathbb R^2: x^2+y^2<1\}
\end{equation*} 
and recall that $a^t\colon\mathbb R^2\to\mathbb R^2$, $c^t\colon\mathbb R^2\to\mathbb R^2$ are the flows given by the formulas $a^t(x,y)=(2^{-t}x,2^{-t}y)$, $c^t(x,y)=(2^t x,2^t y)$ with the origin $O$ as a sink and a source point accordingly. By Proposition \ref{LokSopr} there exist the neighbourhoods $u_\omega$, $u_\alpha$ ($u_{\omega'}$, $u_{\alpha'}$) of $\omega$, $\alpha$ ($\omega'$, $\alpha'$) accordingly such that $f^t|_{u_\omega}$, $f^t|_{u_\alpha}$ ($f'^t|_{u_\omega}$, $f'^t|_{u_\alpha}$) are topologically conjugate to $a^t(x,y)|_u$, $c^t(x,y)|_u$ by means of some homeomorphisms $h_{\omega}\colon u_\omega\to u$, $h_{\alpha}\colon u_\alpha\to u$ ($h_{\omega'}\colon u_{\omega'}\to u$,  $h_{\alpha'}\colon u_{\alpha'}\to u$) accordingly. Without loss of generality let us think that these neighbourhoods do not cross each other for all polygonal regions.

For $r\in(0,1]$ let $S_r=\{(x,y)\in\mathbb R^2: x^2+y^2=r\}$ and $S_r^\omega=h_\omega^{-1}(S_r)$, $S_r^\alpha=h_\alpha^{-1}(S_r)$ ($S_r^{\omega'}=h_{\omega'}^{-1}(S_r)$, $S_r^{\alpha'}=h_{\alpha'}^{-1}(S_r)$).
Let $\{A\}=S^\omega_1\cap l_{\alpha,\omega}$, $\{A_0\}=S^\omega_1\cap l_{\omega,\sigma_1}$ ($\{A'\}=S_1^{\omega'}\cap l_{\alpha',\omega'}$, $\{A'_0\}=S^{\omega'}_1\cap l_{\omega',\sigma'_1}$) and $\{C\}=S_1^{\alpha}\cap l_{\alpha,\omega}$, $\{C_0\}=S_1^{\alpha}\cap l_{\alpha,\sigma_n}$ ($\{C'\}=S_1^{\alpha'}\cap l_{\alpha',\omega'}$, $\{C'_0\}=S_1^{\alpha'}\cap l_{\alpha',\sigma'_n}$).

Everywhere below we will denote by $m_{a,b}$ the closure of a segment of a cross-section to the trajectories of $f^t$ ($f'^t$) bounded by points $a$  ($a'$) and $b$ ($b'$). In particular denote by $m_{A,A_0}$ ($m_{A',A'_0}$) the segment which is the intersection $S^\omega_1\cap\Delta$ ($S^{\omega'}_1\cap\Delta'$) (see Figure \ref{MnogOblSOkr}). Let $x\in m_{A,A_0}$. Let $\mu_{A,A_0}\colon m_{A,A_0}\to[0,1]$ ($\mu_{A',A'_0}\colon m_{A',A'_0}\to[0,1]$) be the arbitrary homeomorphism such that $\mu_{{A,A_0}}(A)=0$ ($\mu_{A',A'_0}(A')=0$). Let 
\begin{equation*}
h_{m_{A,A_0}}=\mu^{-1}_{A'A'_0}\mu_{A,A_0}\colon m_{A,A_0}\to m_{A',A'_0}.
\end{equation*} 
Let $x\in m_{A,A_0}$ ($x'\in m_{A',A'_0}$) and $\mathcal O_{x}$ ($\mathcal O_{x'}$) be the trajectory of $x$ ($x'$). Let $x^\omega\in (cl(u_\omega)\cap\Delta\setminus\{\omega\})$, then $x^\omega= S_{r}^\omega\cap\mathcal O_{x}$ for some $r\in(0,1]$ and $x\in m_{A,A_0}$. Let us define the homeomorphism $h_{u_\omega}\colon cl(u_\omega)\cap\Delta\to cl(u_{\omega'})\cap\Delta'$ so that $h_{u_{\omega}}(\omega)={\omega'}$ and $h_{u_{\omega}}(x^\omega)=x'^{\omega'}$, where $x'^{\omega'}=S_{r}^{\omega'}\cap\mathcal O_{h_{m_{A,A_0}}(x)}$. Similarly for points $x^\alpha\in (cl(u_\alpha)\cap\Delta\setminus\{\alpha\})$ being the intersection point $x^\alpha=S_{r}^\alpha\cap\mathcal O_{x}$ for some $r\in(0,1]$ and $x\in m_{A,A_0}$, define the homeomorphism $h_{u_\alpha}\colon cl(u_\alpha)\cap\Delta\to cl(u_{\alpha'})\cap\Delta'$ so that $h_{u_{\alpha}}(\alpha)={\alpha'}$ and $h_{u_{\alpha}}(x^\alpha)=x'^{\alpha'}$, where $x'^{\alpha'}=S_{r}^{\alpha'}\cap\mathcal O_{h_{m_{A,A_0}}(x)}$. 

\begin{figure}[htb]
\centerline{\includegraphics [width=15 cm] {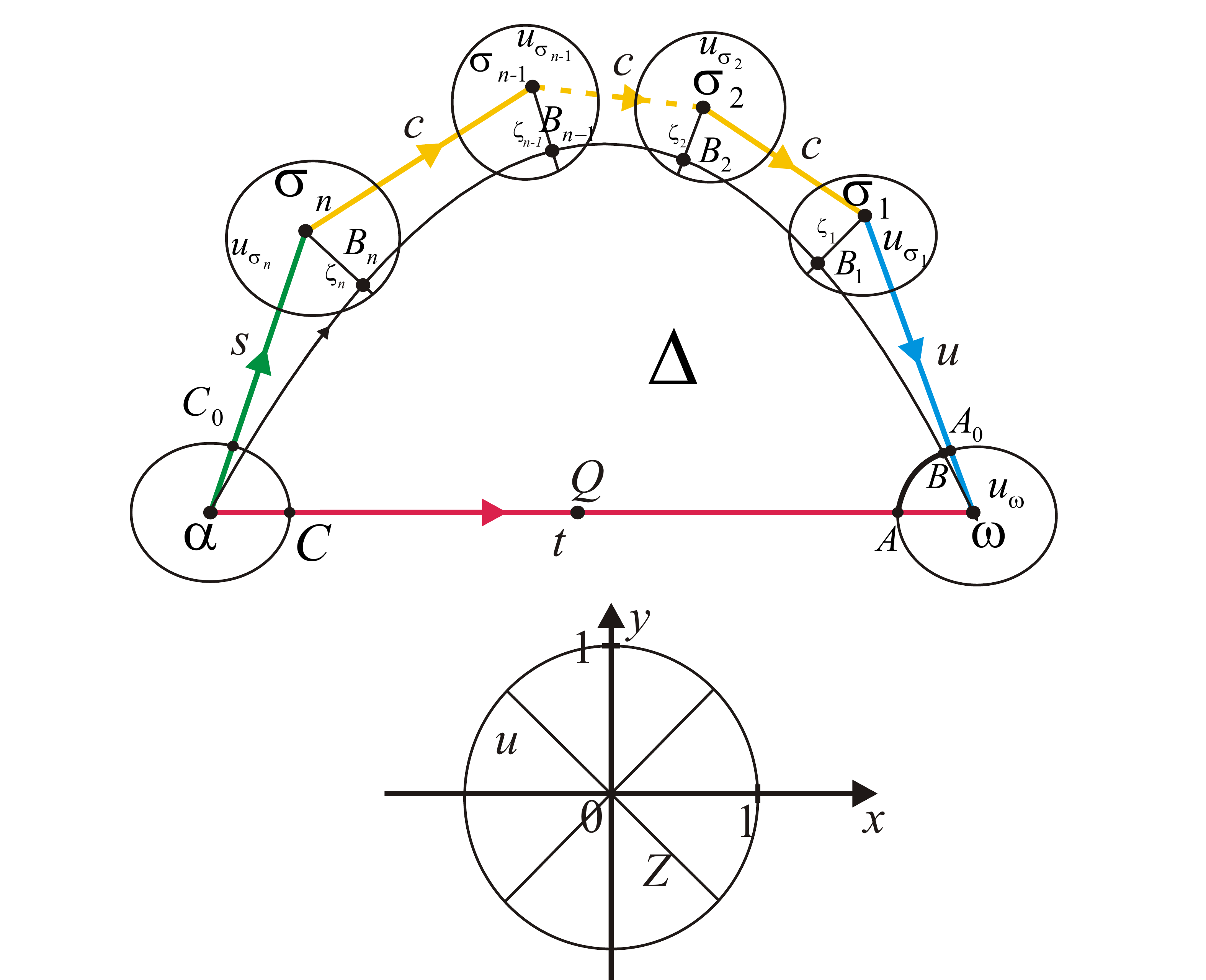}}
\caption{Steps 1-3: the construction of $h_\Delta$ in the neighbourhoods of the node points; the constructions of $\zeta_i$}\label{MnogOblSOkr}
\end{figure}

{\bf Step 2.} Let us construct $h_\Delta$ on the boundary of $\Delta$.

Everywhere below we will denote by $l_{a,b}$ the closure of a segment of a trajectory or an separatrix of a saddle point bounded by points $a$ and $b$, and by $\lambda_{a,b}$ we will denote its length. Notice that $l_{a,b}=l_{b,a}$ and $\lambda_{a,b}=\lambda_{b,a}$. For smooth segments $l_{a,b}$, $l_{a',b'}$ of trajectories of $f^t$, $f'^t$ we will call {\it a homeomorphism by the length of arc} a homeomorphism $h_{l_{a,b}}\colon l_{a,b}\to l_{a',b'}$ defined by the following rule for a point $x\in l_{a,b}$:  
\begin{equation*}
\lambda_{a',h_{l_{a,b}}(x)}=\frac{\lambda_{a,x}\cdot\lambda_{a',b'}}{\lambda_{a,b}}.
\end{equation*}

Thus, we construct the following homeomorphisms:  $h_{l_{A,C}}\colon l_{A,C}\to l_{A',C'}$, $h_{l_{A_0,\sigma_1}}\colon l_{A_0,\sigma_1}\to l_{A'_0,\sigma'_1}$, $h_{l_{C_0,\sigma_n}}\colon l_{C_0,\sigma_n}\to l_{C'_0,\sigma'_n}$ and $h_{l_{\sigma_i,\sigma_{i+1}}}\colon l_{\sigma_i,\sigma_{i+1}}\to l_{\sigma'_i,\sigma'_{i+1}}$.

A similar construction on the boundaries of all polygonal regions will provided $h_\Delta|_{cl(\Delta)\cap cl(\tilde\Delta)}=h_{\tilde\Delta}|_{cl(\Delta)\cap cl(\tilde\Delta)}$ for any polygonal regions $\Delta$, $\tilde\Delta$ of $f^t$.

{\bf Step 3.} Let us construct cross-sections connecting the saddle points with some point inside $l_{A,C}$ ($l_{A',C'}$).  

Let $Q\in int(l_{A,C})$ and $Q'=h_{l_{A,C}}(Q)$. Let us construct cross-sections $m_{Q,\sigma_1},\dots,m_{Q,\sigma_n}$ ($m_{Q',\sigma'_1},\dots,m_{Q',\sigma'_n}$) the following way.  

Recall that for $i=\overline{1,n}$ there exists a neighbourhood $u_{\sigma_i}$ ($u_{\sigma'_i}$) of $\sigma_i$ ($\sigma'_i$) such that $f^t|_{u_{\sigma_i}}$ ($f'^t|_{u_{\sigma_i}}$) is topologically conjugate to $b^t|_u$ by means of some homeomorphism $h_{\sigma_i}\colon u_{\sigma_i}\to u$ ($h_{\sigma'_i}\colon u_{\sigma'_i}\to u$), where $b^t\colon \mathbb R^2\to \mathbb R^2$ and $b^t(x,y)=\left(2^{-t}x,2^ty\right)$. Let 
\begin{equation*}
Z=\{(x,y)\in\mathbb R^2 \mid |x|=|y|\}\cap u.
\end{equation*}
The set $Z$ consists of the two intervals crossing in the origin and transversal to the trajectories of $b^t$. Let $\{\zeta_i\}=h^{-1}_{\sigma_i}(Z)\cap\Delta$ ($\{\zeta'_i\}=h^{-1}_{\sigma'_i}(Z)\cap\Delta'$). Let us choose the point $B\in m_{A,A_0}$ such that  $\mathcal O_B\cap\zeta_i\not=\emptyset$ and $\mathcal O_{B'}\cap\zeta'_i\not=\emptyset$ for $i=\overline{1,n}$, where $B'=h_{m_{A,A_0}}(B)$ (see Figure \ref{Sekuschie}). 

Let $\{B_i\}=\mathcal O_B\cap\zeta_i$ ($\{B'_i\}=\mathcal O_{B'}\cap\zeta'_i$). Denote by $m_{B_i,\sigma_i}$ ($m_{B'_i,\sigma'_i}$) the subset $\zeta_i$ ($\zeta'_i$) bounded by $B_i$ ($B'_i$) and $\sigma_i$ ($\sigma'_i$). Let $t_0\in\mathbb R$ ($t'_0\in\mathbb R$) and $t_i\in\mathbb R$ ($t'_i\in\mathbb R$) be such that $A=f^{t_0}(Q)$ ($A'=f^{t'_0}(Q')$) and $B=f^{t_i}(B_i)$ ($B'=f^{t'_i}(B'_i)$). Let 
\begin{align*}
m_{B_i,Q}=&\left\{f^{\frac{(-t_i)\cdot\mu_{A,A_0}(x)+(-t_0)(\mu_{A,A_0}(B)-\mu_{A,A_0}(x))}{\mu_{A,A_0}(B)}}(x),\,x\in m_{A,A_0}\right\}, \\
m_{B'_i,Q'}=&\left\{f'^{\frac{(-t'_i)\cdot\mu_{A',A'_0}(x')+(-t'_0)(\mu_{A',A'_0}(B')-\mu_{A',A'_0}(x'))}{\mu_{A',A'_0}(B')}}(x'),\,x'\in m_{A',A'_0}\right\}.
\end{align*}
Then $m_{Q,\sigma_i}=m_{Q,B_i}\cup m_{B_i,\sigma_i}$ ($m_{Q',\sigma'_i}=m_{Q',B'_i}\cup m_{B'_i,\sigma'_i}$) (see Figure \ref{Sekuschie}).

\begin{figure}[htb]
\centerline{\includegraphics [width=10.5 cm] {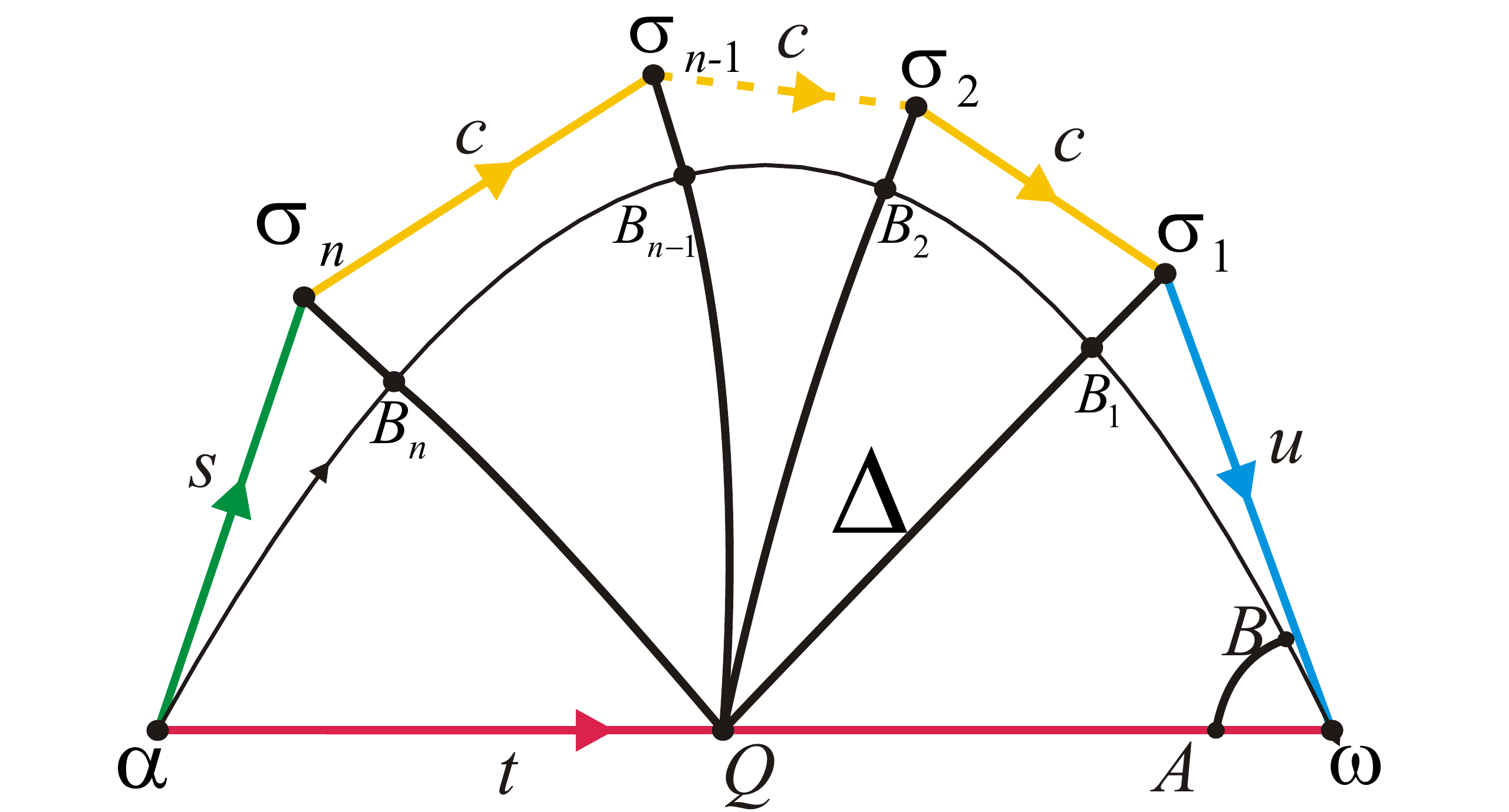}}
\caption{Construction of the cross-sections $m_{Q,\sigma_1},\dots,m_{Q,\sigma_n}$}\label{Sekuschie}
\end{figure}

{\bf Step 4.} Let us continue $h_\Delta$ inside $\Delta$.

Let $x_0\in m_{A,A_0}$, $x'_0=h_{m_{A,A_0}}(x_0)$, $\mathcal O_{x_0}$ is the trajectory of $x_0$ and $\mathcal O_{x'_0}$ is the trajectory of $x'_0$. Let $\{x_i\}=\mathcal O_{x_0}\cap m_{Q,\sigma_i}$, $\{x'_i\}=\mathcal O_{x'_0}\cap m_{Q',\sigma'_i}$ for $i=\overline{1,n}$, $\{x_{n+1}\}=\mathcal O_{x_0}\cap m_{C,C_0}$, $\{x'_{n+1}\}=\mathcal O_{x'_0}\cap m_{C',C'_0}$ (see Figure \ref{Vnutr}).

\begin{figure}[htb]
\centerline{\includegraphics [width=10.5 cm] {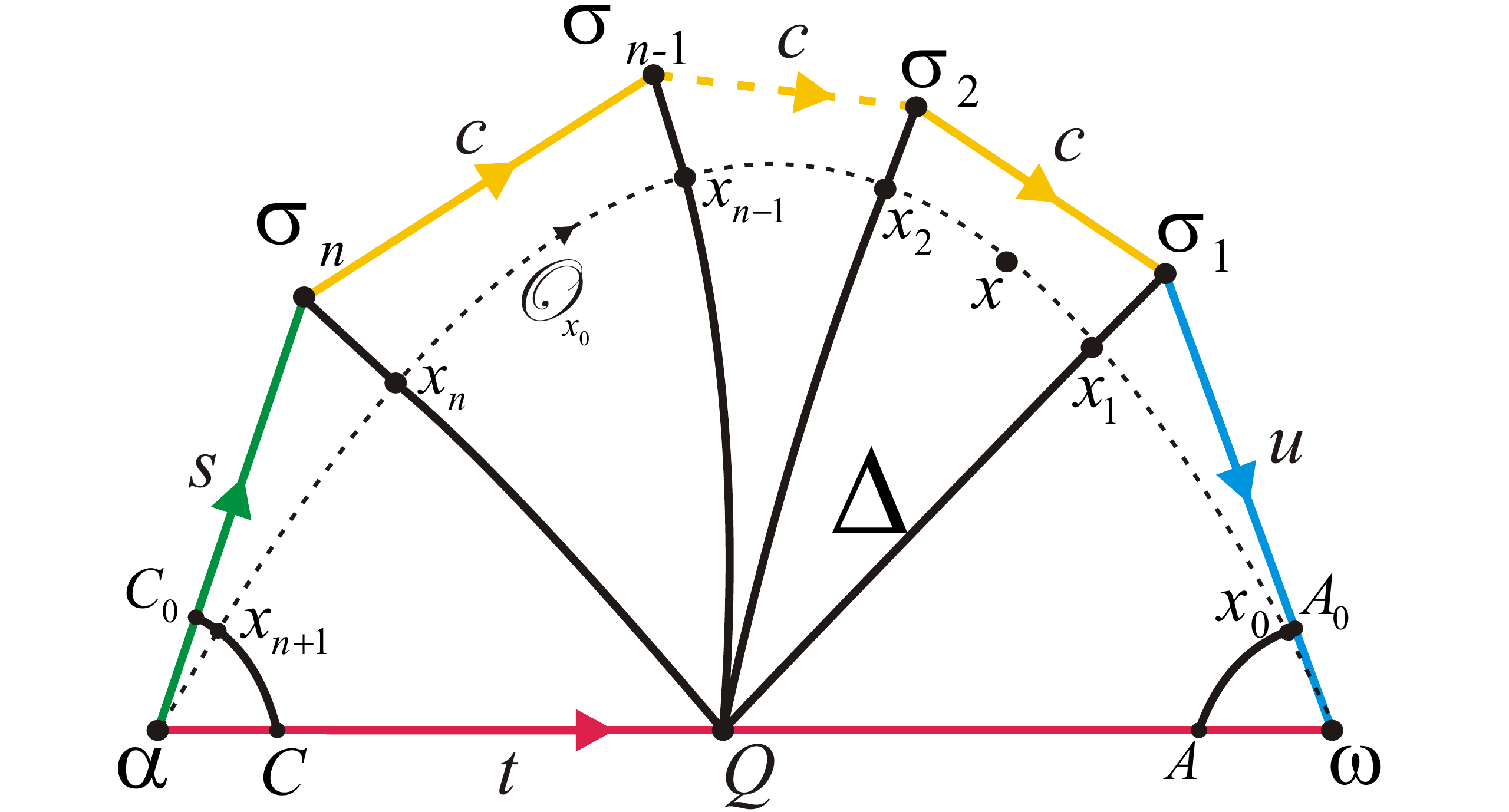}}
\caption{Extention of $h_\Delta$ inside  $\Delta$}\label{Vnutr}
\end{figure}

Extend $h_\Delta$ onto the trajectory $\mathcal O_{x_0}$ so that $h_\Delta|_{l_{x_i,x_{i+1}}}=h_{l_{x_i,x_{i+1}}}\colon l_{x_i,x_{i+1}}\to l_{x'_i,x'_{i+1}}$.

Denote by $h_M\colon M\to M$ a homeomorphism composed by $h_\Delta,\Delta\in\Delta_{f^t}$. As $M\cap S=\mathcal M$ and $M'\cap S=\mathcal M'$, then we define the homeomorphism $h_{\mathcal M}\colon cl(\mathcal M)\to cl(\mathcal M')$ by the formula 
\begin{equation*}
h_{\mathcal M}=h_M|_{cl(\mathcal M)}.
\end{equation*} 
Thus we have the homeomorphism 
\begin{equation*}
h_{\mathcal M}\colon cl(\mathcal M)\to cl(\mathcal M')
\end{equation*} 
for every $\mathcal M$-region of the flow $\phi^t$.

{\bf II. $\mathcal E$-region.} Let us consider some $\mathcal E$-region of the flow $\phi^t$. Consider the $\mathcal E'$-region of the flow $\phi'^t$ such that 
\begin{equation*}
\mathcal E'=(\pi^*_{\phi'^t})^{-1}\xi\pi^*_{\phi^t}(\mathcal E).
\end{equation*}  
These two regions are of the same type because of the weight of the vertices corresponding to them. 

Let $E_1$ and $E_2$ be the connected components of $\partial\mathcal E$. Then they are cutting circles and, hence, $E'_i=(\pi^*_{\phi'^t})^{-1}\xi\pi^*_{\phi^t}(E_i),\,i=1,2$ are cutting circles which are the connected components of $\partial\mathcal E'$.

Let $h_{E_1}\colon E_1\to E'_1$ be an arbitrary homeomorphism preserving orientations of $E_1$ and $E'_1$. 
Let $x_0\in E_1$ and  $\{x_1\}=\mathcal O_{x_0}\cap E_2$. Let $x'_0=h_{E_1}(x_0)$ and $\{x'_1\}=\mathcal O_{x'_0}\cap E'_2$. Let us construct the homeomorphism $h_{\mathcal E}\colon cl(\mathcal E)\to cl(\mathcal E')$ so that $h_{\mathcal E}|_{l_{x_0,x_1}}=h_{l_{x_0,x_1}}\colon l_{x_0,x_1}\to l_{x'_0,x'_1}$. 

Thus we have the homeomorphism 
\begin{equation*}
h_{\mathcal E}\colon cl(\mathcal E)\to cl(\mathcal E')
\end{equation*} 
for every $\mathcal E$-region of the flow $\phi^t$.

{\bf III. $\mathcal A$-region.} Let us consider some $\mathcal A$-region of the flow $\phi^t$ with a source $\alpha$ (for definiteness) inside. Consider the region 
\begin{equation*}
(\pi^*_{\phi'^t})^{-1}\xi\pi^*_{\phi^t}(\mathcal A)
\end{equation*}
of the flow $\phi'^t$. We perfectly know that it is the $\mathcal A'$-region with a source inside because of directions of edges. 

$\mathcal A$ ($\mathcal A'$) is surely surrounded by some $\mathcal L$, ($\mathcal L'=(\pi^*_{\phi'^t})^{-1}\xi\pi^*_{\phi^t}(\mathcal L)$).

Recall that 
\begin{equation*}
u=\{(x,y)\in\mathbb R^2\mid x^2+y^2<1\}.
\end{equation*}
Due to Proposition \ref{LokSopr} the source $\alpha$ ($\alpha'$) has a neighbourhood $u_\alpha$ ($u_{\alpha'}$) and the homeomorphism $h_\alpha\colon u_\alpha\to u$ ($h_{\alpha'}\colon u_{\alpha'}\to u$) such that $\phi^t|_{u_\alpha}$ ($\phi'^t|_{u_{\alpha'}}$) is conjugate with $c^t|_u$. Also recall that $S_r=\{(x,y)\in\mathbb R^2: x^2+y^2=r\}$ for $r\in(0,1]$ and $S_r^\alpha=h_\alpha^{-1}(S_r)$ ($S_r^{\alpha'}=h_{\alpha'}^{-1}(S_r)$). Notice that $S^\alpha_1=\partial u_\alpha$ ($S^{\alpha'}_1=\partial u_{\alpha'}$)

We know that $\partial\mathcal A$ ($\partial\mathcal A'$) is oriented. Then now we orient $\partial u_\alpha=S^\alpha_1$ ($\partial u_{\alpha'}=S^{\alpha'}_1$) consistently with $\partial\mathcal A$ ($\partial\mathcal A'$). Let $h_{S^\alpha_1}\colon S^\alpha_1\to S^{\alpha'}_1$ be the arbitrary homeomorphism preserving orientations of $S^\alpha_1$ and $S^{\alpha'}_1$. Let $x\in S^\alpha_1$ ($x'\in S^{\alpha'}_1$) and $\mathcal O_x$ ($\mathcal O_{x'}$) is the trajectory of $x$ ($x'$). Let $x^\alpha\in (cl(u_\alpha)\setminus\{\omega\})$, then $x^\alpha= S_{r}^\alpha\cap\mathcal O_{x}$ for some $r\in(0,1]$ and $x\in S_1^\alpha$. Let us define the homeomorphism $h_{u_\alpha}\colon cl(u_\alpha)\to cl(u_{\alpha'})$ so that $h_{u_{\alpha}}(\alpha)={\alpha'}$ and $h_{u_{\alpha}}(x^\alpha)=x'^{\alpha'}$, where $x'^{\alpha'}=S_{r}^{\alpha'}\cap\mathcal O_{h_{S^\alpha_1}(x)}$.

Let $x_0\in\partial u_\alpha$, $x'_0\in\partial u_{\alpha'}$ and $x'_0=h_{u_\alpha}(x_0)$. Let $\mathcal O_{x_0}$ ($\mathcal O_{x'_0}$) be the trajectory of $x_0$ ($x'_0$) and $\{x_1\}=\mathcal O_{x_0}\cap\partial A$ ($\{x'_1\}=\mathcal O_{x'_0}\cap\partial A'$). Let us define the homeomorphism $h_{cl(\mathcal A)\backslash u_\alpha}\colon cl(\mathcal A)\backslash u_\alpha\to cl(\mathcal A')\backslash u_{\alpha'}$ so that $h_{cl(\mathcal A)\backslash u_\alpha}|_{l_{x_0,x_1}}=h_{l_{x_0,x_1}}\colon l_{x_0,x_1}\to l_{x'_0,x'_1}$ for any $x_0\in\partial u_\alpha$.

So we define the homeomorphism $h_{\mathcal A}\colon\ cl(\mathcal A)\to cl(\mathcal A')$ by the formula
\begin{equation*}
h_{\mathcal A}(x)=\begin{cases}
h_{u_\alpha}(x) &\text{if $x\in u_\alpha$,}\\
h_{cl(\mathcal A)\backslash u_\alpha}(x)  &\text{if $x\in cl(\mathcal A)\backslash u_\alpha$.}
\end{cases}
\end{equation*}

The homeomorphism for $\mathcal A$-region with a sink can be constructed similarly. Thus we have a homeomorphism 
\begin{equation*}
h_{\mathcal A}\colon cl(\mathcal A)\to cl(\mathcal A')
\end{equation*}
for every $\mathcal A$-region of the flow $\phi^t$.

{\bf IV. $\mathcal L$-region.} Here we will follow to alike construction in \cite{GurKur}. Let us consider some $\mathcal L$-region of the flow $\phi^t$ with an unstable (for definiteness) limit cycle $\mathfrak c$ inside. Consider a region 
\begin{equation*}
(\pi^*_{\phi'^t})^{-1}\xi\pi^*_{\phi^t}(\mathcal L)
\end{equation*} 
of the flow $\phi'^t$. We perfectly know that it is an $\mathcal L'$-region of the flow $\phi'^t$ with an unstable limit cycle $\mathfrak c'$  inside of the same type as $\mathcal L$ because of directions of edges and their number. We also know that as limit cycles as cutting circles of $\mathcal L$ and $\mathcal L'$ are oriented consistently because of equal orientation of $\psi(\tau_{\mathcal L,\mathcal M})$ and $\tau_{\xi(\mathcal L),\xi(\mathcal M)}$.

{\bf 1.} Consider the case of the annulus.

{\bf Step 1.} Let $L^*$ and $L^{**}$ be the two connecting components of $\partial\mathcal L$ and let $L'^*=(\pi^*_{\phi'^t})^{-1}\xi\pi^*_{\phi^t}(L^*)$, $L'^{**}=(\pi^*_{\phi'^t})^{-1}\xi\pi^*_{\phi^t}(L^{**})$. Let $h^*\colon L^*\to L'^*$ and $h^{**}\colon L^{**}\to L'^{**}$ be the contractions of the homeomorphisms constructed before on the closures of the elementary regions adjoined to $\mathcal L$ ($\mathcal L'$) with $L^{*}$ and $L^{**}$ as their common boundary accordingly.

{\bf Step 2.} Recall that $\Sigma_p$ ($\Sigma_{p'}$) is the Poincar\'e's cross-section of $\mathfrak c$ ($\mathfrak c'$), $F_{p}$ ($F_{p'}$) is the Poincar\'e's map and $\{p\}=\Sigma_p\cap\mathfrak c$ ($\{p'\}=\Sigma_{p'}\cap\mathfrak c'$). By Proposition \ref{LokSoprDif} $F_p\in Diff^1(\Sigma_p)$. The point $p$ is a source of $F_p$. Let $m_{a,b},\,a,b\in\Sigma_p$ ($m_{a',b'},\,a',b'\in\Sigma_{p'}$) be the $\Sigma_p$'s segment restricted by the points $a$ and $b$ ($\Sigma_{p'}$'s segment restricted by the points $a'$ and $b'$) and $\mu_{a,b}$ ($\mu_{a',b'}$) be its length.

Let $\{x^*\}=\Sigma_p\cap L^*$ and $\{x^{**}\}=\Sigma_p\cap L^{**}$. Let $x'^*\in L'^*$ and $x'^{**}\in L'^*$ be such that $x'^*=h^*(x^*)$ and $x'^{**}=h^{**}(x^{**})$. Let $\{x'_*\}=\Sigma_{p'}\cap L'^*$ and $\{x'_{**}\}=\Sigma_{p'}\cap L'^{**}$. Let $t^*\geq 0$ and $t^{**}\geq 0$ be the least non negative numbers such that $x'^*=\phi'^{t^*}(x'_*)$ and $x'^{**}=\phi'^{t^{**}}(x'_{**})$. Let 
\begin{align*}
p'^*=&\;\;\phi'^{(\frac{\mu_{x'_{**},p'}}{\mu_{x'_{**},x'_*}}(t^*-t^{**})+t^{**})}(p'), \\ \Sigma_{p'^*}=&\left\{\phi'^{(\frac{\mu_{x'_{**},x'}}{\mu_{x'_{**},x'_*}}(t^*-t^{**})+t^{**})}(x'):\;\; x'\in\Sigma_{p'}\right\}.
\end{align*} 
 
{\bf Step 3.} Let us construct a homeomorphism $h_\Sigma\colon\Sigma_p\to\Sigma_{p'^*}$ by the next way. For  $x\in m_{x^*,F^{-1}_p(x^*)}$ let $t^*_x\geq 0$ be such that $\phi^{t^*_x}(x)\in L^*$. Let $t^*_{x'}\geq 0$ be such that $\phi'^{(-t^*_{x'})}(h^*(\phi^{t^*_x}(x)))\in m_{x'^*,F^{-1}_{p'^*}(x'^*)}$. Then 
\begin{equation*}
h_\Sigma(x)=\phi'^{(-t^*_{x'})}(h^*(\phi^{t^*_x}(x)));
\end{equation*} 
Similarly for $x\in m_{x^{**},F^{-1}_p(x^{**})}$ let $t^{**}_x\geq 0$ be such that $\phi^{t^{**}_x}(x)\in L^{**}$. Let $t^{**}_{x'}\geq 0$ be such that $\phi'^{(-t^{**}_{x'})}(h^{**}(\phi^{t^{**}_x}(x)))\in m_{x'^{**},F^{-1}_{p'^*}(x'^{**})}.$ Then 
\begin{equation*}
h_\Sigma(x)=\phi'^{(-t^{**}_{x'})}(h^{**}(\phi^{t^{**}_x}(x)));
\end{equation*}
For $x\in m_{F^{-k}_p(x^*),F^{-k+1}_p(x^*)}$, where $k\in\mathbb N$ let 
\begin{equation*}
h_\Sigma(x)=F_{p'^*}^{-k}(x)\circ h_\Sigma\circ F_p^{k}(x);
\end{equation*}
For $x\in m_{F^{-l}_p(x^{**}),F^{-l+1}_p(x^{**})}$, where $l\in\mathbb N$ let 
\begin{equation*}
h_\Sigma(x)=F_{p'^*}^{-l}(x)\circ h_\Sigma\circ F_p^{l}(x).
\end{equation*}

{\bf Step 4.} Let us define a homeomorphism $h_{\mathcal L}\colon cl(\mathcal L) \to cl(\mathcal L')$ by the next formulas.
\begin{align*}
&\text{For }x\in\Sigma_p\setminus(m_{F_p^{-1}(x^*),x^*}\cup m_{F_p^{-1}(x^{**}),x^{**}}) \\ &\text{let }h_\mathcal L|_{l_{x,F_p(x)}}=h_{l_{x,F_p(x)}}\colon l_{x,F_p(x)}\to l_{h_\Sigma(x),h_\Sigma(F_p(x))}.
\\ &\text{For }x\in m_{F_p^{-1}(x^*),x^*} \\ &\text{let }h_\mathcal L|_{l_{x,\phi^{t^*_x}(x)}}=h_{l_{x,\phi^{t^*_x}(x)}}\colon l_{x,\phi^{t^*_x}(x)}\to l_{h_\Sigma(x),h^*(\phi^{t^*_x}(x))}. \\
&\text{For }x\in m_{F_p^{-1}(x^{**}),x^{**}} \\ &\text{let }h_\mathcal L|_{l_{x,\phi^{t^{**}_x}(x)}}=h_{l_{x,\phi^{t^{**}_x}(x)}}\colon l_{x,\phi^{t^{**}_x}(x)}\to l_{h_\Sigma(x),h^{**}(\phi^{t^{**}_x}(x))}.
\end{align*}

{\bf 2.} Consider the case of the M\"obius band. In general the construction is similar to the case of the annulus but it has the few important differences.

{\bf Step 1.} The boundary $\partial\mathcal L$ has only one connected component, and $\Sigma_p$ crosses it in two points $x^*$ and $x^**$. Denote $h^*\colon\partial\mathcal L\to\partial\mathcal L$ the homeomorphism constructed before on $\partial \mathcal L$. Let $x'_*$ be one of the two points in which $\Sigma_{p'}$ crosses $\partial\mathcal L'$. Let $x'^*=h^*(x^*)$. Let $t^*\geq 0$ be the least non negative number such that $x'^*=\phi'^{t^*}(x'_*)$. Let 
\begin{align*}
p'^*=&\;\;\phi'^{t^*}(p') \quad \text{and} \quad \Sigma_{p'^*}=\left\{\phi'^{t^*}(x'):\;\; x'\in\Sigma_{p'}\right\}.
\end{align*} 
Denote by $x'^{**}$ the second point in which $\Sigma_{p'^*}$ crosses $\partial\mathcal L'$ (i.e. $x'^{**}\not=x'^*$).

{\bf Step 2.} Let us construct a homeomorphism 
\begin{equation*}
h_\Sigma\colon(\Sigma_p\setminus m_{x^{**},F^{-1}_p(x^*)})\to(\Sigma_{p'^*}\setminus m_{x'^{**},F^{-1}_{p'^*}(x'^*)})
\end{equation*} 
by the next way: 
For $x\in m_{x^*,F^{-2}_p(x^*)}$ let $t^*_x\geq 0$ be such that $\phi^{t^*_x}(x)\in\partial{\mathcal L}$. Let $t^*_{x'}\geq 0$ be such that $\phi'^{(-t^*_{x'})}(h^*(\phi^{t^*_x}(x)))\in m_{x'^*,F^{-2}_{p'^*}(x'^*)}$. Then 
\begin{equation*}
h_\Sigma(x)=\phi'^{(-t^*_{x'})}(h^*(\phi^{t^*_x}(x)));
\end{equation*}
For $x\in m_{F^{-k}_p(x^*),F^{-k-2}_p(x^*)}$, where $k\in\mathbb N$, let 
\begin{equation*}
h_\Sigma(x)=F_{p'^*}^{-k}(x)\circ h_\Sigma\circ F_p^k(x);
\end{equation*}

{\bf Step 3.} Let us define the homeomorphism $h_{\mathcal L}\colon cl(\mathcal L)\to cl(\mathcal L')$ by the next formulas  
\begin{align*}
&\text{For }x\in\Sigma_p\setminus(m_{F_p^{-2}(x^*),x^*}\cup m_{x^{**},F_p(x^*)}) \\ &\text{let }h_\mathcal L|_{l_{x,F_p(x)}}=h_{l_{x,F_p(x)}}\colon l_{x,F_p(x)}\to l_{h_\Sigma(x),h_\Sigma(F_p(x))}.\\
&\text{For } x\in m_{F_p^{-2}(x^*),x^*}\\ &\text{let } h_\mathcal L|_{l_{x,\phi^{t^*_x}(x)}}=h_{l_{x,\phi^{t^*_x}(x)}}\colon l_{x,\phi^{t^*_x}(x)}\to l_{h_\Sigma(x),h^*(\phi^{t^*_x}(x))}.
\end{align*}

The homeomorphism for $\mathcal L$-region with a stable limit cycle can be constructed similarly. Thus we have a homeomorphism 
\begin{equation*}
h_{\mathcal L}\colon cl(\mathcal L)\to cl(\mathcal L')
\end{equation*} 
for every $\mathcal L$-region of the flow $\phi^t$.

{\bf The final homeomorphism.} We have created the homeomorphism for each elementary region. Thus, the final homeomorphism $h\colon S\to S$ we define by the formula 
\begin{equation*}
h(x)=\begin{cases} 
h_{\mathcal A}(x) &\text{if $x\in cl(\mathcal A)$,}\\
h_{\mathcal E}(x) &\text{if $x\in cl(\mathcal E)$,}\\
h_{\mathcal M}(x) &\text{if $x\in cl(\mathcal M)$,}\\
h_{\mathcal L}(x) &\text{if $x\in cl(\mathcal L)$.}
\end{cases}
\end{equation*}

So, Theorem \ref{Classif} is proved.

\begin{flushright}$\square$\end{flushright}

\section{Realisation of an admissible equipped graph $\Upsilon^*$ by the $\Omega$-stable flow $\phi^t$ on a surface $S$}

Firstly we give the proof of the Lemma \ref{ReBezPer} about realisation of an admissible four-colour graph by the $\Omega$-stable flow $f^t$ without limit cycles.

\subsection{The proof of the Lemma \ref{ReBezPer}}

This proof is equal to the one in our paper \cite{KruMaPoMS} but still we give it there for completness.

Let $\Gamma$ be some admissible four-colour graph.

{\bf I.} Let us construct an $\Omega$-stable flow  $f^t$ without limit cycles corresponding to $\Gamma$'s isomorphic class step by step. 

{\it Step 1.} Consider some vertex $b$ of $\Gamma$. The vertex $b$ is incident to $n$ edges, first of which is a $t$-edge, second one is an $u$-edge, third one is a $s$-edge and rest ones are $c^b_j$-edges, $j=\overline{1,(n-3)}$. We construct on $\mathbb R^2$ a regular $2(n-1)$-gon $A_1 A_2\dots A_{2(n-1)}$ with the centre in the origin $O(0,0)$ and the vertices $A_1 (1,0)$ and $A_n (-1,0)$ (see Fig. \ref{OblastNaPl}). Denote by $\varphi$ the central angle and by $a$ the length of a side of $A_1 A_2\dots A_{2(n-1)}$. Then $$\varphi=\frac{\pi}{n-1},\,a=\frac{1}{\sin\varphi}.$$ Hence, $A_k=(\cos(k-1)\varphi,\sin(k-1)\varphi)$ for $k=\overline{1,2(n-1)}$.

\begin{figure}[htb]
\centerline{\includegraphics [width=13.5 cm] {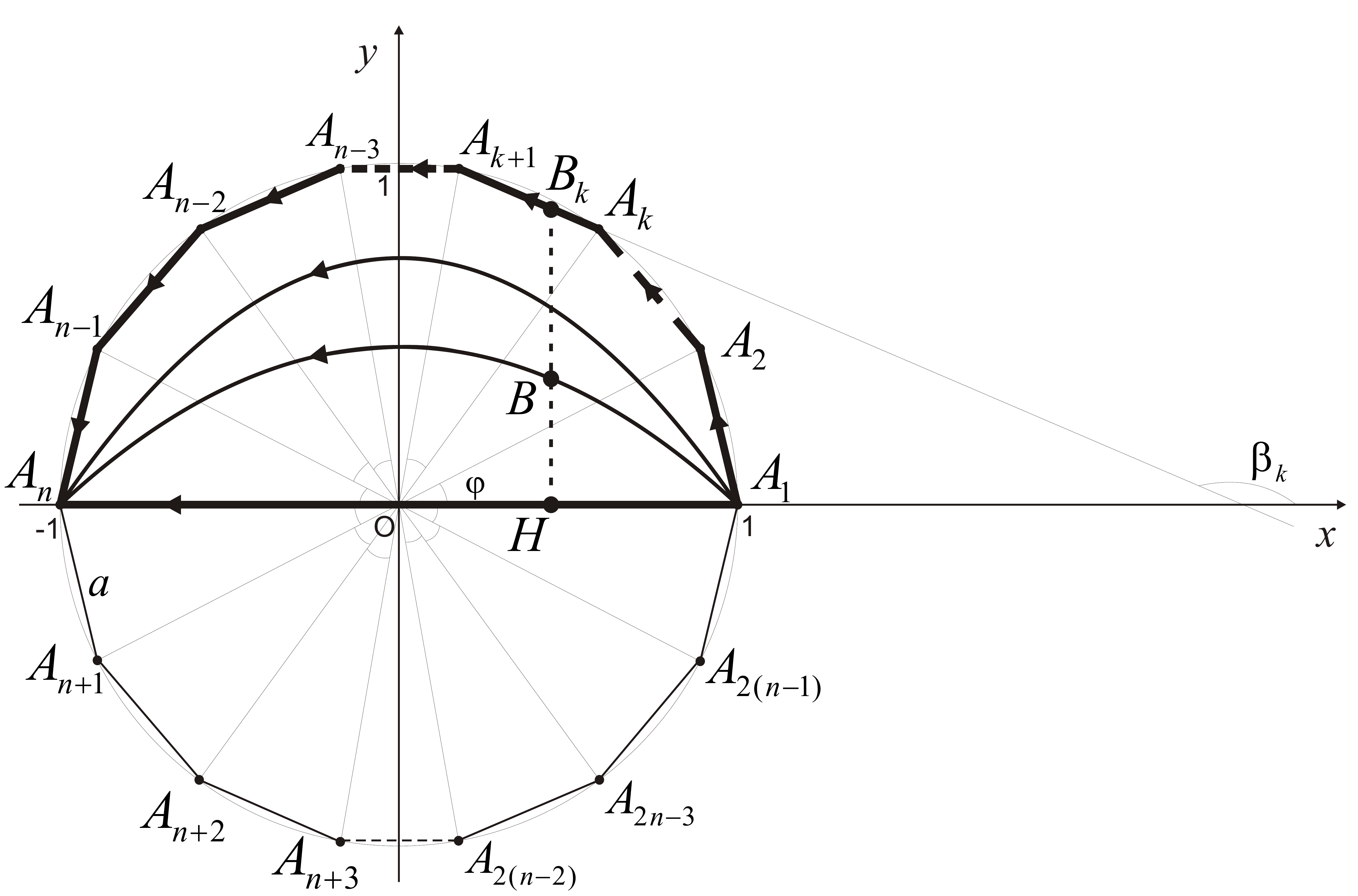}} \caption{Designing of the vector field $v_b$}\label{OblastNaPl}
\end{figure}

Let us denote $M_b\equiv cl(A_1 A_2\dots A_{2(n-1)}\cap\{(x,y)\in\mathbb R^2\mid y>0\})$. By construction $M_b$ is the $n$-gon with the vertices $A_1,A_2,\dots,A_n$, i.e. the number of $M_b$'s vertices is equal to the number of the edges incident to $b$. We will call $A_1 A_n$ as the $t$-side, $A_{n-1},A_n$ as the $u$-side or the $c_0$-side, $A_1,A_2$ as the $s$-side or the $c_{n-2}$-side and $A_k A_{k+1}$ as the $c_{n-k-1}$-side, where $k=\overline{2,(n-2)}$.

{\it Step 2.} Let us design the vector field $v_b$ on $M_b$ the following way.

Firstly we define the vector field $v_{A_1,A_n}$ on the side $A_1 A_n$ by the differential equations system 
\begin{equation*}
\left\{
\begin{aligned} 
&\dot y=0, \\ 
& \dot x=\sin\frac{1}{2}\pi(x-1).
\end{aligned}
\right. 
\end{equation*} 
By construction $A_1$ and $A_n$ are fixed points, and the flow given by $v_{A_1,A_n}$ moves from $A_1$ to $A_n$. Let us define the vector field on the other sides of $M_b$. 

Consider the side $A_k A_{k+1},\,k=\overline{1,(n-1)}$. The straight line passing through the points $A_k$, $A_{k+1}$ is defined by the equation 
\begin{equation*}
A_k A_{k+1}\colon\frac{x-\cos(k-1)\varphi}{\cos k\varphi-\cos(k-1)\varphi}=\frac{y-\sin(k-1)\varphi}{\sin k\varphi-\sin(k-1)\varphi},
\end{equation*} 
it gives us its slope $\beta_k$: 
\begin{equation*}
\beta_k=\arctan\frac{\sin k\varphi-\sin(k-1)\varphi}{\cos k\varphi-\cos(k-1)\varphi}.
\end{equation*} 
Now we reduce the considered case to the case of $A_1 A_n$. To do this let us make the one-to-one correspondence $t_k$ between points of $[\cos k\varphi,\cos(k-1)\varphi]$ and $[-1,1]$ by the formula 
\begin{equation*}
t_k=2\frac{x-\cos k\varphi}{\cos(k-1)\varphi-\cos k\varphi}-1.
\end{equation*} 
Let $\gamma_k=\sin\frac{1}{2}\pi(t_k-1)$, then we define the vector field $v_{A_k A_{k+1}}$ by the following system of equations 
\begin{equation*}
\left[
\begin{aligned}
& \left\{
\begin{aligned} 
& \beta_k\not=0, \\
& \dot x=-\gamma_k\cdot \cos\beta_k\cdot \text{sign}\,x, \\ 
& \dot y=-\gamma_k\cdot \sin\beta_k\cdot \text{sign}\,x, \end{aligned}
\right. \\
& \left\{
\begin{aligned}
 & \beta_k=0, \\
& \dot x=\gamma_k, \\ & \dot y=0. 
\end{aligned}
\right. \\
\end{aligned}
\right. 
\end{equation*}

{\it Step 3.} Now we define the vector field $v_{int}$ inside $M_b$. Let us take an arbitrary point $B(x,y)\in int M_b$. Then $B\in B_k H$, where $B_k\in A_k A_{k+1}$ for some $k=\overline{1,(k-1)}$ and $H$ is the $B_k$'s projection to $A_1 A_n$ (see Fig. \ref{OblastNaPl}). Define $v_{int}$ as an average between the vectors $v_{A_1,A_n}(H)$ and $v_{A_k A_{k+1}}(B_k)$ by the following system of equations 
\begin{equation*}
\left[
\begin{aligned} 
& \left\{
\begin{aligned} 
& \beta_k\not=0, \\
& \dot x=\frac{B_k B}{B_k H}\sin\frac{1}{2}\pi(x-1)-\frac{BH}{B_k H}\gamma_k\cdot \cos\beta_k\cdot \text{sign}\,x, \\ 
& \dot y=-\frac{BH}{B_k H}\gamma_k\cdot \sin\beta_k\cdot \text{sign}\,x, \end{aligned}
\right. \\
& \left\{
\begin{aligned} 
& \beta_k=0, \\
& \dot x=\frac{B_k B}{B_k H}\sin\frac{1}{2}\pi(x-1)+\frac{BH}{B_k H}\gamma_k, \\ 
& \dot y=0. 
\end{aligned}
\right. \\
\end{aligned}
\right. 
\end{equation*}

Define $v_b$ by the system 
\begin{equation*}
v(x,y)=\begin{cases}
v_{A_1 A_n}(x,y), & (x,y)\in A_1 A_n, \\
v_{A_k A_{k+1}}(x,y), & (x,y)\in A_k A_{k+1}, k\in\{1,\dots,n-1\}, \\
v_{int}(x,y), & (x,y)\in int\,M_b. 
\end{cases} 
\end{equation*}

{\it Step 4.} We denote by $B$ the set of vertices, by $N$ -- the number of vertices, by $E$ -- the set of edges of $\Gamma$. Let $\eta_b$ is the correspondence between $t$-, $u$-, $s$- or $c_i$-edge incident to the vertex $b$ and $t$-, $u$-, $s$- or $c_i$-side of $M_b$ accordingly. Denote by $\mathfrak M$ the disjunctive union of $M_b$, $b\in B$. Introduce on $\mathfrak M$ the minimal equivalence relation satisfying to the following rule: if $b_1,\,b_2\in B$ are incident to $e\in E$, then the segments $P_1 Q_1=\eta_{b_1}(e)$ and $P_2 Q_2=\eta_{b_2}(e)$ are identified so that a point $(x_1,y_1)\in P_1 Q_1=[(x_{P_1},y_{P_1}),(x_{Q_1},y_{Q_1})]$ is equivalent to the point $(x_2,y_2)\in P_2 Q_2=[(x_{P_2},y_{P_2}),(x_{Q_2},y_{Q_2})]$, where 
\begin{equation*}
\left\{
\begin{aligned}
& x_2=x_{P_2}+\frac{(x_1-x_{P_1})(x_{Q_2}-x_{P_2})}{x_{Q_1}-x_{P_1}}, \\
& y_2=y_{P_2}+\frac{(y_1-y_{P_1})(y_{Q_2}-y_{P_2})}{y_{Q_1}-y_{P_1}}. 
\end{aligned}
\right.
\end{equation*} 

Properties of an admissible graph entail that the quotient space $M=\mathfrak M/\sim$ is an closed topological 2-manifold. Denote by $q\colon\mathfrak M\to S$ its natural projection. Notice that the vector field in the points equivalent by $\sim$ has equal length, hence, $q$ induces the continuous vector field, we denote it by $V_M$. 

{\it Step 5.} Let us define a smooth structure on $M$ such that $V_M$ is smooth on it.

Let us cover $M$ by a finite number of maps $(U_z,\psi_z),\,z\in M$, where $U_z\subset M$ is the open neighbourhood of $z$ and $\psi_z\colon U_z\to\mathbb R^2$ is the homeomorphism to the image of the following type.

\begin{figure}[h!]
\centerline{\includegraphics [width=15 cm] {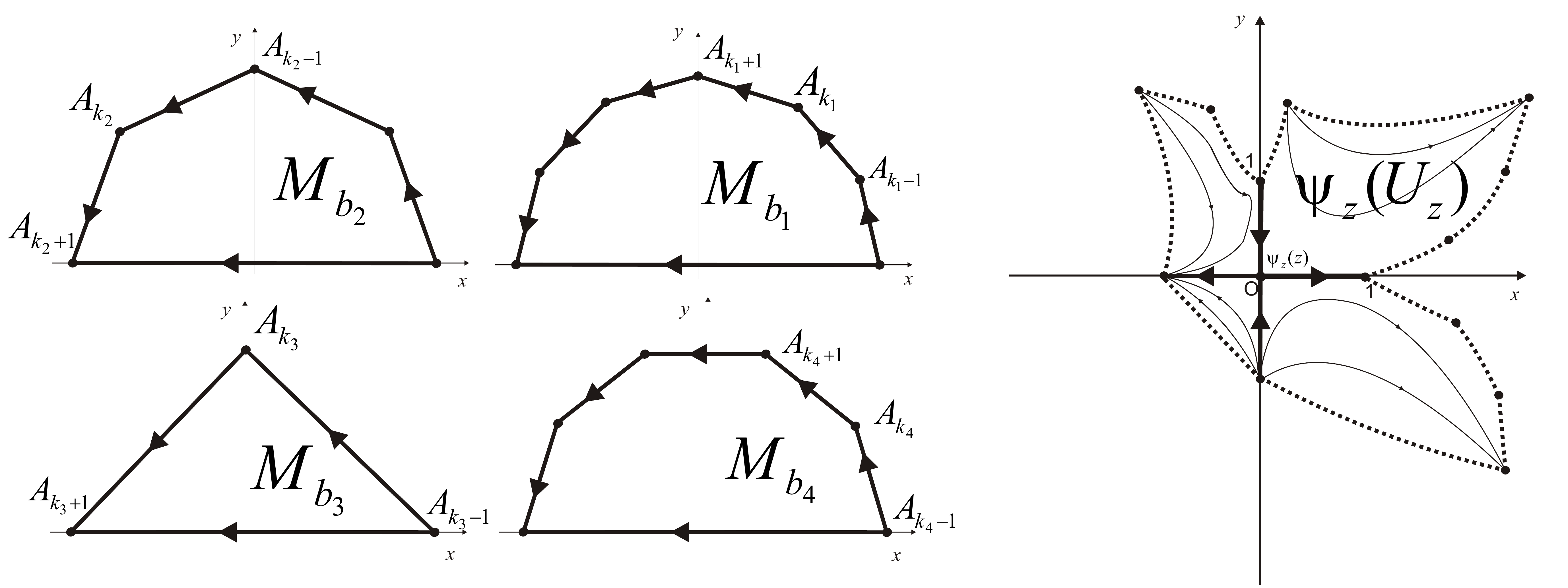}} \caption{An example of a map of the first type}\label{SglazhSedlo}
\end{figure}

{\bf 1.} Consider on $\Gamma$ a $c^*$-cycle 
\begin{equation*}
\{b_1, c^{b_1}_{j_1}\!=\!c^{b_2}_{j_2}, b_2, c^{b_2}_{j_2-1}\!=\!c^{b_3}_{j_3-1}, b_3, c^{b_3}_{j_3}\!=\!c^{b_4}_{j_4}, b_4, c^{b_4}_{j_4-1}\!=\!c^{b_1}_{j_1-1}, b_1\},
\end{equation*} 
where a vertix $b_i\in B,\,i\in\{1,\dots,4\}$ corresponds to the $n_i$-gon $M_{b_i}$ and $\eta_{b_i}(c^{b_i}_{j_i})=A_{k_i}A_{k_{i+1}}$ for $k_i=n_i-j_i-1$ (see Fig. \ref{SglazhSedlo}). We will denote the length of $A_{k_i} A_{k_{i+1}}$, the central angle of $M_{b_i}$ and the angle between the vector $\overrightarrow{A_{k_i} A_{k_i+1}}\,(\overrightarrow{A_{k_i} A_{k_i-1}})$ and $Ox^+$ by $a_i$, $\varphi_i$ and $\beta^+_{k_i}\,(\beta^-_{k_i})$ accordingly. Besides, the angles $\beta^+_{k_i},\,\beta^-_{k_i}$ are selected so that $|\beta^+_{k_i}-\beta^-_{k_i}|<\pi$. Let \begin{multline*}
U_z=int(\bigcup\limits_{i=1}^4 q(M_{b_i})),\; \psi_z(\varrho)=\mu_i(p_{1,i}((q|_{M_{b_i}})^{-1}(\varrho))) \\ \textrm{ for }\varrho\in q(M_{b_i}), i\in\{1,\dots,4\},
\end{multline*} 
where 
\begin{equation*}
p_{1,i}(x,y)=\left(\frac{x-\cos(k_i-1)\varphi_i}{a_i}, \frac{y-\cos(k_i-1)\varphi_i}{a_i}\right)
\end{equation*} 
and 
\begin{equation*}
\mu_i(x,y)=\mu_i(r\cos\theta,r\sin\theta)=(r\cos\theta_{1,i},r\sin\theta_{1,i}),
\end{equation*} 
where $(r,\theta)$ are polar coordinates and the function $\theta_{1,i}(\theta)$ is given by the formula 
\begin{equation*}
\theta_{1,i}(\theta)=\left(i-2\cdot\left(\frac{i}{2}(mod1)\right)\right)\cdot\frac{\pi}{2}+(-1)^{i-1}\cdot\frac{\pi}{2}\cdot\frac{\theta-\beta^+_{k_i}}{\beta^+_{k_i}-\beta^-_{k_i}}.
\end{equation*}

Here the function $p_{1,i}(x,y)$ produces parallel transfer of $M_{b_i}$ so that the vertex $A_{k_i}$ hits in the origin, and increases the lengths of $A_{k_i} A_{{k_i}+1}$ and $A_{k_i-1} A_{k_i}$ up to unit. The function $\mu_i(x,y)$ identifies the angle of the vertex $A_{k_i}$ with $i$-th coordinate angle.

{\bf 2.} Consider on $\Gamma$ a $st$-cycle 
\begin{equation*}
\{b_1, (b_1,b_2), b_2, (b_2, b_3), b_3,\dots, b_{2m-1}, (b_{2m-1},b_{2m}), b_{2m}, (b_{2m}, b_1), b_1\},
\end{equation*} 
where a vertex $b_i\in B$, $i\in\{1,\dots,2m\}$ corresponds to the $n_i$-gon $M_{b_i}$, $\eta_{b_{2j-1}}((b_{2j-1},b_{2j}))$ is the side $A_1 A_2$ of $M_{b_{2j-1}}$,\\ $\eta_{b_{2j}}((b_{2j-1},b_{2j}))$ is the side $A_1 A_2$ of $M_{b_{2j}}$,\\ $\eta_{b_{2j}}((b_{2j}, b_{2j+1}))$ is the side $A_1 A_{n_{2j}}$ of $M_{b_{2j}}$,\\ $\eta_{b_{2j+1}}((b_{2j}, b_{2j+1}))$ is the side $A_1 A_{n_{2j+1}}$ of $M_{b_{2j+1}}$ for $j\in\{1,\dots,m\}$, $n_{2j+1}=n_1$. 

Recall that the length of $A_1 A_2$ of $M_{b_i}$ is equal to $a_i$ and the length of $A_1 A_{n_i}$ is equal to 2. Denote the angle between the vector $\overrightarrow{A_1 A_2}$ and $Ox^+$ by $\beta_{1,i}^+$. Let 
\begin{multline*}
U_z=int(\bigcup\limits_{i=1}^{2m}q(\mathcal M_{b_i})),\; \psi_z(\varrho)=\nu_i(p_{2,i}(q|_{ M_{b_i}})^{-1}(\varrho)))\\ \textrm{ for }\varrho\in q(M_{b_i}), i\in\{1,\dots,2m\},
\end{multline*} 
where 
\begin{equation*}
p_{2,i}(x,y)=(x-1,y)
\end{equation*} 
and 
\begin{equation*}
\nu_i(x,y)=\nu_i(r cos\theta,r sin\theta)=(r_{2,i}(r,\theta)\cdot cos(\theta_{2,i}(\theta)),\; r_{2,i}(r,\theta)\cdot sin(\theta_{2,i}(\theta)))
\end{equation*}  
are given by the formulas 
\begin{align*}
& r_{2,i}(r,\theta)=\frac{r}{2}\cdot\frac{\theta-\beta^+_{1,i}}{\pi-\beta^+_{1,i}}+\frac{r}{a_i}\cdot\frac{\pi-\theta}{\pi-\beta^+_{1,i}},\\ 
& \theta_{2,i}(\theta)=\left(i-2\cdot\left(\frac{i}{2}(mod1)\right)\right)\cdot\frac{\pi}{m}+(-1)^{i-1}\cdot\frac{\theta-\beta^+_{1,i}}{\pi-\beta^+_{1,i}}\cdot\frac{\pi}{m}.
\end{align*}

Here the function $p_{2,i}(x,y)$ produces parallel transfer of $M_{b_i}$ so that the vertex $A_1$ hits in the origin. The function $\nu_i(x,y)$, $i\in\{1,\dots,2m\}$ changes the lengths of $A_1 A_2$ and $A_1 A_{n_i}$ to unit, changes the quantity of the angle of the vertex $A_1$ to $\frac{\pi}{m}$ and distributes the polygons $M_{b_i}$ with their $A_1$ to the origin so that the angles of $A_1$ adjoin each other and fill the full angle distributing each on $i$-th place while by-passing around the origin from $Ox^+$ counter clockwise on some circle with a radius less than 1. Also they provide a coincidence of the same-colour sides of adjoining polygons.

{\bf 3.} Consider on $\Gamma$ a $ut$-cycle 
\begin{equation*}
\{b_1, (b_1,b_2), b_2, (b_2, b_3), b_3,\dots, b_{2m-1}, (b_{2m-1},b_{2m}), b_{2m}, (b_{2m}, b_1), b_1\},
\end{equation*} 
where a vertex $b_i\in B$, $i\in\{1,\dots,2m\}$ corresponds to the $n_i$-gon $M_{b_i}$, $\eta_{b_{2j-1}}((b_{2j-1},b_{2j}))$ is the side $A_{n_{2j-1}-1}A_{n_{2j-1}}$ of $M_{b_{2j-1}}$,\\ $\eta_{b_{2j}}((b_{2j-1},b_{2j}))$ is the side $A_{n_{2j}-1}A_{n_{2j}}$ of $M_{b_{2j}}$,\\ $\eta_{b_{2j}}((b_{2j}, b_{2j+1}))$ is the side $A_1 A_{n_{2j}}$ of $M_{b_{2j}}$,\\ $\eta_{b_{2j+1}}((b_{2j}, b_{2j+1}))$ is the side $A_1 A_{n_{2j+1}}$ of $M_{b_{2j+1}}$ for $j\in\{1,\dots,m\}$, $n_{2j+1}=n_1$.

Recall that the length of $A_{n_i-1} A_{n_i}$ of $M_{b_i}$ is equal to $a_i$, the length of $A_1 A_{n_i}$ is equal to 2, the angle between the vector $\overrightarrow{A_{n_i} A_{n_i-1}}$ and $Ox^+$ is equal to $\beta_{n_i,i}^-$. Let 
\begin{multline*}
U_z=int(\bigcup\limits_{i=1}^{2m} q( M_{b_i})),\;\psi_z(\varrho)=\kappa_i(p_{3,i}((q|_{M_{b_i}})^{-1}(\varrho)))\\ \textrm{ for }\varrho\in q(M_{b_i}), i=\{1,\dots,2m\},
\end{multline*} 
where $$p_{3,i}(x,y)=(x+1,y)$$ and 
\begin{equation*}
\kappa_i(x,y)=\kappa_i(r cos\theta,r sin\theta)=(r_{3,i}(r,\theta)\cdot cos(\theta_{3,i}(\theta)),\; r_{3,i}(r,\theta)\cdot sin(\theta_{3,i}(\theta)))
\end{equation*}
are given by the formulas 
\begin{align*}
& r_{3,i}(r,\theta)=\frac{r}{2}\cdot\frac{\beta^-_{n_i}-\theta}{\beta^-_{n_i}}+\frac{r}{a_i}\cdot\frac{\theta}{\beta^-_{n_i}},\\
& \theta_{3,i}(\theta)=\left(i-2\left(\frac{i}{2}(mod 1)\right)\right)\cdot\frac{\pi}{m}+(-1)^{i-1}\frac{\theta}{\beta^-_{n_i}}\cdot\frac{\pi}{m}.
\end{align*}

Here the function $p_{3,i}(x,y)$ produces parallel transfer of $M_{b_i}$ so that the vertex $A_{n_i}$ hits in the origin. The function $\kappa_i(x,y)$, $i\in\{1,\dots,2m\}$ changes the lengths of $A_{n_i-1} A_{n_i}$ and $A_1 A_{n_i}$ to unit preserving continuity of the field, changes the quantity of the angle of the vertex $A_{n_i}$ to $\frac{\pi}{m}$ and distributes the polygons $M_{b_i}$ with their $A_{n_i}$ to the origin so that the angles of $A_{n_i}$ adjoin each other and fill the full angle distributing each on $i$-th place while by-passing around the origin from $Ox^+$ counter clockwise on some circle with a radius less than 1. Also they provide coincidence of same-colour sides of adjoining polygons.

The conversion functions for introduced maps are the compositions of smooth maps constructed in 1-3 and the inverse ones for them, hence, these maps design a smooth structure on the surface $M$.

{\bf II.} Here we prove i) and ii) of Theorem \ref{ReBezPer}.

i) Let us prove that the Euler characteristic of $M$ may be found by the formula (\ref{EjHarBezPer}) $\chi(S)=\nu_0-\nu_1+\nu_2$, where $\nu_0$, $\nu_1$, and $\nu_2$ is the numbers of all $tu$-, $c^*$- and $st$-cycles of $\Gamma$ accordingly. The fact that the numbers of all the sinks, the saddle points and the sources are equal to $\nu_0$, $\nu_1$ and $\nu_2$  accordingly follows from Proposition \ref{TochkiICikly}. That entails the affirmation i), because the given formula is the formula for the index sum of the singular points of $f^t$.

{\bf III.} Let us prove that the surface $M$ is non-orientable if and only if $\Gamma$ contains at least one cycle of odd length.

The surface $M$ with the flow $f^t$ is orientable if and only if all polygonal regions of $f^t$ can be oriented consistently. We can define an orientation for each polygonal region by selection of one of two possible cyclic order of its fixed points: $\alpha$-$\sigma_n$-$\dots$-$\sigma_1$-$\omega$ and $\omega$-$\sigma_1$-$\dots$-$\sigma_n$-$\alpha$, where $\alpha$ is a source, $\sigma_j$ is a saddle point ($j=\overline{1,n}$), $\omega$ is a sink. Let the sign ``$+$'' is appropriated to a polygonal region in the first case, ``$-$'' -- in the second one. It is clear that orientations of two such regions can are consistent if and only if the regions are equipped by different signs. As there is one-to-one correspondence $\pi_{f^t}$ between the polygonal regions of $f^t$ and the vertices of the graph $\Gamma$, then the condition of orientability of $M$ may be stated the following way: the surface $M$ can be oriented if and only if the vertices of $\Gamma$ are equipped by the signs ``$+$'' and ``$-$'' so that each two its vertices connected by an edge has different signs. We call such arrangement of signs of the $\Gamma$'s vertices the {\it right} one.

So all we need is to prove that $\Gamma$ doesn't have odd length cycles if and only if the right sign arrangement for the vertices of $\Gamma$ exists.

Truth of that affirmation from the left to the right is obvious, because the right sign arrangement in an odd length cycle is impossible. Let us prove from the right to the left: let $\Gamma$ doesn't have odd length cycles. Then the right sign arrangement might be made this way: let us take some vertex $b_0$ of $\Gamma$ and appropriate to it ``$+$''; for each other vertex $b_i$ let us consider a path connecting $b_i$ with $b_0$ and appropriate to it ``$+$'' if the path has even length and ``$-$'' in the other case. As we suppose $\Gamma$ doesn't have odd length cycles, then such arrangement doesn't depend on the selection of a path and, hence, is defined correctly.

\begin{flushright}$\square$\end{flushright}

\subsection{The proof of the realisation Theorem \ref{Realiz}}

Let $\Upsilon^*$ be some admissible equipped graph.

{\bf I.} Let us construct an $\Omega$-stable flow $\phi^t$ corresponding to $\Upsilon^*$'s isomorphic class by creation the surface $S$ and the continuous vector field.

{\bf Step 1.} Let $B$ be the set of $\Upsilon^*$'s vertices and $E$ be the set of its edges. Let us construct for every $b\in B$ a surface $S_b$ with a boundary and a vector field $\overrightarrow{V_b}$ on it, transversal to the boundary. The required  $\Omega$-stable flow on $S$ will be glued from these pieces of dynamics by means annuli which correspond to the edges from $E$ according to incidence. 

{\bf $\mathcal A$-vertex.}  Let $b$ be an $\mathcal A$-vertex. Then $S_b=\{(x,y)\in\mathbb R^2\mid x^2+y^2<1\}$ and the vector field on the disk $S_b$ we define by the vector-function $\overrightarrow{V_b}(x,y)=\{-x,-y\}$ ($\overrightarrow{V_b}(x,y)=\{x,y\}$), if the edges incident to $b$ are directed to $b$ (out of $b$). 

{\bf $\mathcal E$-vertex.} Let $b$ be an $\mathcal E$-vertex. Let $W=[0,1]\times[0,1]$. Define the minimal equivalence relation $\sim_{\mathcal E}$ on $W$ such that $(x,0)\sim_{\mathcal E}(x,1)$ for $x\in[0,1]$. Let $S_b=W/\sim_{\mathcal E}$ and $q_{b}\colon W\to S_b$ be the natural projection. Define on the annulus $S_b$ the vector field by the formula $\overrightarrow{V_b}(x,y)=q_b(\{\frac{1}{2},1\})$ ($\overrightarrow{V_b}(x,y)=q_b(\{\sin\frac{2\pi}{3}\big(x+\frac{1}{4}\big),\cos\frac{2\pi}{3}\big(x+\frac{1}{4}\big)\})$), if the weight of $\mathcal E$ is ``$+$'' (``$-$''). 

{\bf $\mathcal L$-vertex.} Let $b$ be an $\mathcal L$-vertex. Let 
\begin{equation*}
\mathcal W=\{(x,y)\in\mathbb R^2: |x|\leq \frac{3-\cos\pi y}{2},\;0\leq y\leq 1\}.
\end{equation*} 
Then $\mathcal W$ is a curvilinear trapezium with the vertices $A(-1;0),B(-2;1),C(2;1),D(1;0)$. Define on $\mathcal W$  the minimal equivalence relation $\sim_{\mathcal L}$ such that $(x,0)\sim_{\mathcal L}(2x,1)$ ($(x,0)\sim_{\mathcal L}(-2x,1)$) for $x\in AD$, if the vertex $b$ is incident to two edges (one edge). Let $S_b=\mathcal W/\sim_{\mathcal L}$ and let $q_{b}\colon\mathcal W\to S_b$ be its natural projection. Then $S_b$ is the annulus (the M\"obius band). Define on $S_b$ the vector field by the formula $\overrightarrow{V_b}(x,y)=q_b(\{0,1\})$ ($\overrightarrow{V_b}(x,y)=q_b(\{0,-1\})$) and orient the boundary of $S_b$ in the direction of motion along the coordinate $y$ from $0$ to $1$ (from $1$ to $0$), if the edges incident to $b$ are directed to $b$ (out of $b$). 

{\bf $\mathcal M$-vertex.} Let $b$ be a $\mathcal M$-vertex. Then $b$ is equipped with the four colour graph $\Gamma_{\mathcal M}$, corresponding to the surface $M$ with the vector field $\overrightarrow{V_M}$, constructed in the proof of Lemma \ref{ReBezPer}. Let $\omega$ ($\alpha$) be a sink (source) of $\overrightarrow{V_M}$ such that $\omega=\pi_{V_M}^{-1}(\tau_{b,\mathcal L})$ ($\alpha=\pi_{V_M}^{-1}(\tau_{\mathcal L,b})$), where $\pi_{V_M}$ is the one-to-one correspondence between the elements of the field $\overrightarrow{V_M}$ and the elements of the four colour graph $\Gamma_\mathcal M$. Let $u_\omega$ ($u_\alpha$) is some neighbourhood of $\omega$ (of $\alpha$) without other elements of the basic set inside and with the boundary transversal to the trajectories of $\overrightarrow{V_M}$. Let us orient $\partial u_\omega$ ($\partial u_\alpha$) consistently with the orientation of the cycle $\tau_{b,\mathcal L}$ ($\tau_{\mathcal L,b}$). Then $S_b=M\setminus\bigcup\limits_{\omega=\pi_{V_M}^{-1}(\tau_{b,\mathcal L})}int~u_\omega\cup\bigcup\limits_{\alpha=\pi_{V_M}^{-1}(\tau_{\mathcal L},b)}int~u_\alpha$ with the field $\overrightarrow{V_b}=\overrightarrow{V_M}|_{S_b}$. We will suppose that each connected component of $\partial S_b$ has an orientation due to the oriented cycle the orientation.

{\bf Step 2}. Let $A=\mathbb S^1\times[-1,1]$ and we have two vector fields $\overrightarrow{V^-}=\{\overrightarrow{v^-}(s),s\in\mathbb S^1\}$, $\overrightarrow{V^+}=\{\overrightarrow{v^+}(s),s\in\mathbb S^1\}$ on $\mathbb S^1\times\{-1\}$, $\mathbb S^1\times\{1\}$, accordingly, such that they are transversal to $\partial{A}$, $\overrightarrow{V^-}$ has a direction to $A$, $\overrightarrow{V^+}$ has a direction out of $A$. Let  
\begin{equation*}
\overrightarrow{V_{A}}=\{\overrightarrow{v}(s,t)=\frac12\left((1-t)\overrightarrow{v^-}(s)+(1+t)\overrightarrow{v^+}(s)\right),s\in\mathbb S^1,t\in[-1,1]\}.
\end{equation*}
We will called the vector field $\overrightarrow{V_{A}}$ by {\it an average of the boundaries}.

For every edge $e\in E$ denote by $A_e$ a copy of the annulus $A$. Let us notice that the sets $\partial\left(\bigsqcup\limits_{b\in B} S_{b}\right)$ and $\partial\left(\bigsqcup\limits_{e\in E}A_e\right)$ consist of the same number of circles. Let $h_{\Upsilon^*}:\partial\left(\bigsqcup\limits_{b\in B} S_{b}\right)\to\partial\left(\bigsqcup\limits_{e\in E}A_e\right)$ be a diffeomorphism such that if $h_{\Upsilon^*}(x)=y$ for $x\in S_b$, $y\in A_e$ then $b,e$ are incident, moreover, $h_{\Upsilon^*}$ induces a concordant orientation on the connected components of $\partial A_e$ for the edge $e$ which is incident to  $\mathcal M$-vertex and $\mathcal L$-vertex.    

Let $\mathcal S=\bigsqcup\limits_{b\in B} S_{b}\sqcup\bigsqcup\limits_{e\in E}\mathcal A_e$. Let us introduce on $\mathcal S$ the minimal equivalent relation $\sim_{\Upsilon^*}$ such that $x\sim_{\Upsilon^*}h_{\Upsilon^*}(x)$. Then $\mathcal S/\sim_{\Upsilon^*}$ is a closed surface, denote it by $S$ and  by $q_S\colon\mathcal S\to S$ the natural projection. Then the required vector field $\overrightarrow{V_{S}}$ on $S$ coincides with $q_S(\overrightarrow{V_{S_b}})$ for every $b\in B$ and is the average of the boundaries on $q_S(A_e)$ for every $e\in E$.

{\bf II.} Let us prove that the Euler characteristic of $S$ can be calculated by the formula (\ref{EjHar})
$\chi(S)=\sum\limits_{\mathcal M}(X_{\mathcal M}-Y_{\mathcal M})+N_\mathcal A$, 
where $X_{\mathcal M}$ is the result of applying the formula (\ref{EjHarBezPer}) to the four-colour graph $\Gamma_{\mathcal M}$ corresponding to the vertex $\mathcal M$, $Y_{\mathcal M}$ is the quantity of the edges which are incident to $\mathcal M$, $N_\mathcal A$ is the quantity of $\mathcal A$-vertex of $\Upsilon^*$.

It is well-known (see, for example, \cite{BorBlizIzrFom}) that $\chi(\Pi_p)=\chi(\Pi)-p$, where $\Pi_p$ is the surface $\Pi$ with $p$ holes and if $\Pi$ is a result of an identifying of the boundaries of $\Pi^1_p$ and $\Pi^2_p$ then $\chi(\Pi)=\chi(\Pi_p^1)+\chi(\Pi_p^2)$. As $S$ is a result of the identifying of the boundaries of $\bigsqcup\limits_{b\in B} S_{b}$ and $\bigsqcup\limits_{e\in E}A_e$ and $\chi(A_e)=0$ then to calculate $\chi(S)$ we need to calculate the characteristic of its elementary regions and to summarize them. 
As $\chi(S_b)=1$ for $b$ being $\mathcal A$-vertex, $\chi(S_b)=0$ for $b$ being $\mathcal E$- or $\mathcal L$-vertex and $\chi(S_b)=X_{\mathcal M}-Y_{\mathcal M}$ for $b$ being $\mathcal M$-vertex then we get the result. 

{\bf III.} Let us prove that $S$ is orientable if and only if every four-colour graph equipping $\Upsilon^*$ has not odd length cycles and each $\mathcal L$-vertex is incident to exactly two edges.

Notice that $\mathcal S$ is orientable if and only if all its parts are orientable, i.e. all its elementary regions are orientable, that equivalently the condition that all $\mathcal L$-regions are the annuli and all four colour graphs equipping $\Upsilon^*$ do not have odd length cycles (see item $(2)$ of Lemma \ref{ReBezPer}). 

\begin{flushright}$\square$\end{flushright}

\section{Efficient algorithms to solve the isomorphism problem in the classes of four-colour and equipped graphs, to calculate the Euler characteristic and to determine orientability of the ambient surface}

In this section, we consider the distinction (isomorphism) problem for four-colour and equipped graphs and the problems of calculation of the Euler characteristic of
the ambient surface and determining its orientability. We present polynomial-time algorithms for their solution.

\subsection{The isomorphism problem, a proof of Theorem \ref{isomorphism for equipped graphs}}

For two given four-colour (or equipped) graphs, the problem is to
decide whether these graphs are isomorphic or not.
Recall that four-colour graphs and directed graphs of flows can be embedded into the ambient surface.
In other words, these graphs can be depicted on the ambient surface such that their vertices are
points and their edges are Jordan curves on the surface, and no two edges are crossing in an internal point.
This observation is useful for our purposes, as there exists a polynomial-time algorithm for
the isomorphism problem of simple graphs embeddable into a fixed surface.

\begin{definition}
An unlabeled graph without loops, directed and multiple edges is called \emph{simple}.
\end{definition}

\begin{proposition}\cite{M80} \label{Miller}
The isomorphism problem for $n$-vertex simple graphs each embeddable into a surface of genus $g$ can be solved
in $O(n^{O(g)})$ time.
\end{proposition}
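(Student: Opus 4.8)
The plan is to reduce the abstract graph isomorphism problem to an isomorphism problem for \emph{embedded} graphs, i.e.\ for combinatorial maps, since fixing the embedding removes almost all combinatorial freedom. First I would reduce to the $3$-connected case. Using the decomposition of a graph into its triconnected components, one obtains for each input graph a tree whose nodes are labelled by $3$-connected components together with the data describing how they are glued along separation pairs. Each such component is a minor of the original graph, so by minor-monotonicity of the genus it again embeds into a surface of genus at most $g$; and since isomorphism of the original graphs reduces, in polynomial time, to isomorphism of these labelled trees (which in turn calls an isomorphism test on the $3$-connected node labels), it suffices to solve the problem for $3$-connected graphs of genus at most $g$.

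For a $3$-connected graph I would pass from the abstract graph to its rotation systems. An embedding into a surface is encoded combinatorially as a map: the set of darts (oriented edge-ends) equipped with the edge involution $\alpha$ and the rotation permutation $\sigma$ giving the cyclic order of darts around each vertex, with a $\pm$ signature on edges in the non-orientable case; the genus is read off from the number of faces by Euler's formula. Two graphs $G_1,G_2$ are isomorphic if and only if some embedding of $G_1$ of genus at most $g$ is \emph{map-isomorphic} to one fixed embedding of $G_2$ of genus at most $g$: an abstract isomorphism transports any embedding of $G_1$ to a map-isomorphic embedding of $G_2$, and conversely a map-isomorphism is in particular an abstract isomorphism. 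Hence it is enough (i) to enumerate all genus-$\le g$ embeddings of $G_1$, and (ii) to test map-isomorphism of each against one chosen embedding of $G_2$.

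Step (ii) is polynomial. For a connected map the permutations $\alpha$ and $\sigma$ act transitively on the darts, so any map-automorphism is determined by the image of a single dart; rooting at a dart and traversing the dart set in breadth-first order along $\alpha$ and $\sigma$ therefore produces a canonical code of the rooted map in linear time. Trying the $O(n)$ possible root darts of $G_2$, together with both orientations, and comparing codes decides map-isomorphism in polynomial time, exactly as in the Hopcroft--Tarjan treatment of the planar case $g=0$.

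The crux is step (i): a $3$-connected graph has only $n^{O(g)}$ combinatorially distinct embeddings into surfaces of genus at most $g$, and they can be listed in $n^{O(g)}$ time. The mechanism is the rigidity of embeddings of $3$-connected graphs, generalising Whitney's theorem that a $3$-connected planar graph has an essentially unique embedding on the sphere. Concretely, I would fix a breadth-first spanning structure and reconstruct the rotation system vertex by vertex: in the planar case each new local rotation is forced by $3$-connectivity, whereas raising the genus to $g$ introduces only $O(g)$ independent ``handle/crosscap'' choices, each selectable among polynomially many local options, so that cutting the surface along the corresponding $O(g)$ curves reduces the embedded graph to a planar one. Bounding these choices --- that is, proving that the freedom in a genus-$\le g$ embedding of a $3$-connected graph is confined to $O(g)$ handle attachments rather than spreading across the whole graph --- is the main obstacle; it is precisely where $3$-connectivity is indispensable and where the exponent $O(g)$ in the running time originates. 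Once the $n^{O(g)}$ embeddings are produced, combining them with the polynomial map-isomorphism test of step (ii) and the triconnected reduction yields the claimed $O(n^{O(g)})$ bound.
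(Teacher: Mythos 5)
First, a point of comparison: the paper does not prove Proposition \ref{Miller} at all --- it is imported verbatim from \cite{M80} and used as a black box, so your attempt can only be measured against the cited literature. At the architectural level you have reproduced the classical blueprint of that literature (Filotti--Mayer and Miller, both building on Filotti--Miller--Reif): reduce to $3$-connected components via the triconnected decomposition, enumerate the genus-$\le g$ embeddings of one graph as rotation systems (with edge signatures in the non-orientable case), and compare each against a fixed embedding of the other graph by a polynomial-time map-isomorphism test. Your step (ii) is correct and standard: a connected map is rigid once a dart is rooted, so canonical codes over the $O(n)$ choices of root dart decide map isomorphism in polynomial time, and your reduction of abstract isomorphism to map isomorphism against \emph{all} embeddings of $G_1$ is sound.

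The genuine gap is exactly where you flag it, and flagging it does not close it: the claim that a $3$-connected graph embeddable in genus $\le g$ admits only $n^{O(g)}$ combinatorially distinct embeddings of genus $\le g$, enumerable in $n^{O(g)}$ time, \emph{is} the content of the theorem --- everything else in your outline is routine. Your proposed mechanism (``the embedding freedom of a $3$-connected graph is confined to $O(g)$ handle attachments, each selectable among polynomially many local options'') is a heuristic, not an argument: Whitney rigidity fails for $g\ge 1$ in ways that are not locally forced at a vertex-by-vertex reconstruction (embeddings of small face-width can differ globally while agreeing on large pieces), and making the enumeration both complete and polynomially bounded is the technical bulk of \cite{M80} and its precursors --- delicate enough that later work (e.g.\ Kawarabayashi's 2015 linear-time algorithm for bounded-genus isomorphism) documents errors and gaps in the original treatments of precisely this step. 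A secondary, fixable omission: in the triconnected reduction the tree isomorphism must be a \emph{labelled} tree isomorphism in which node labels are compared as graphs with distinguished virtual edges, and the isomorphism tests on node labels must be folded into the tree comparison consistently with the gluing data at separation pairs; you assert this reduction but do not specify it. In short: correct skeleton, matching the cited source's strategy, but the central enumeration lemma is assumed rather than proved, so as a proof the proposal is incomplete.
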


First, let us consider only the case of four-colour graphs. We cannot directly apply Proposition \ref{Miller} for
distinction of four-colour graphs, as they are not simple. Nevertheless, it is possible to reduce the problem
for four-colour graphs to the same problem for simple graphs with a small complexity of the reduction. To this end,
we need the following operations with graphs.

\begin{definition}
The operation of \emph{$k$-subdivision of an edge} $(a,b)$ is to delete the edge
from a graph, add vertices $c_1,\ldots,c_k$ and edges $(a,c_1),(c_1,c_2),\ldots,(c_k,b)$.
\end{definition}

\begin{definition}
The operation of \emph{$(k_1,k_2)$-subdivision of an edge} $(a,b)$ is to delete it from a graph, add vertices
$c_1,c_2,\ldots,c_{k_1},v,u,w,d_1,d_2,\ldots,d_{k_2}$ and edges $(a,c_1),\\(c_1,c_2),\ldots,(c_{k_1},v),(v,u),(u,w),(v,w),(v,d_1),(d_1,d_2),\ldots,(d_{k_2},b)$.
\end{definition}

For the four-colour graph $\Gamma_{{\mathcal M}}$ of a given flow, we construct a simple graph $\Gamma^*_{{\mathcal M}}$ as follows.
In the graph $\Gamma_{{\mathcal M}}$ we perform 1-subdivision of each $s$-edge, 2-subdivision of each $t$-edge, 3-subdivision
of each $u$-edge. Let $e=(a,b)$ be an arbitrary $c$-edge of $\Gamma_{\mathcal{M}}$, $num_a(e)$ and $num_b(e)$ be the numbers
of $e$ in the sets of $c$-edges incident to $a$ and $b$, correspondingly. We perform $(num_a(e)+5,num_b(e)+5)$-subdivision of $e$.
A similar operation is performed for all $c$-edges of the graph $\Gamma_{{\mathcal M}}$. The resultant graph $\Gamma^*_{{\mathcal M}}$ is simple (see Fig \ref{AlgRazb}).

\begin{figure}[htb]
\centerline{\includegraphics [width=15 cm] {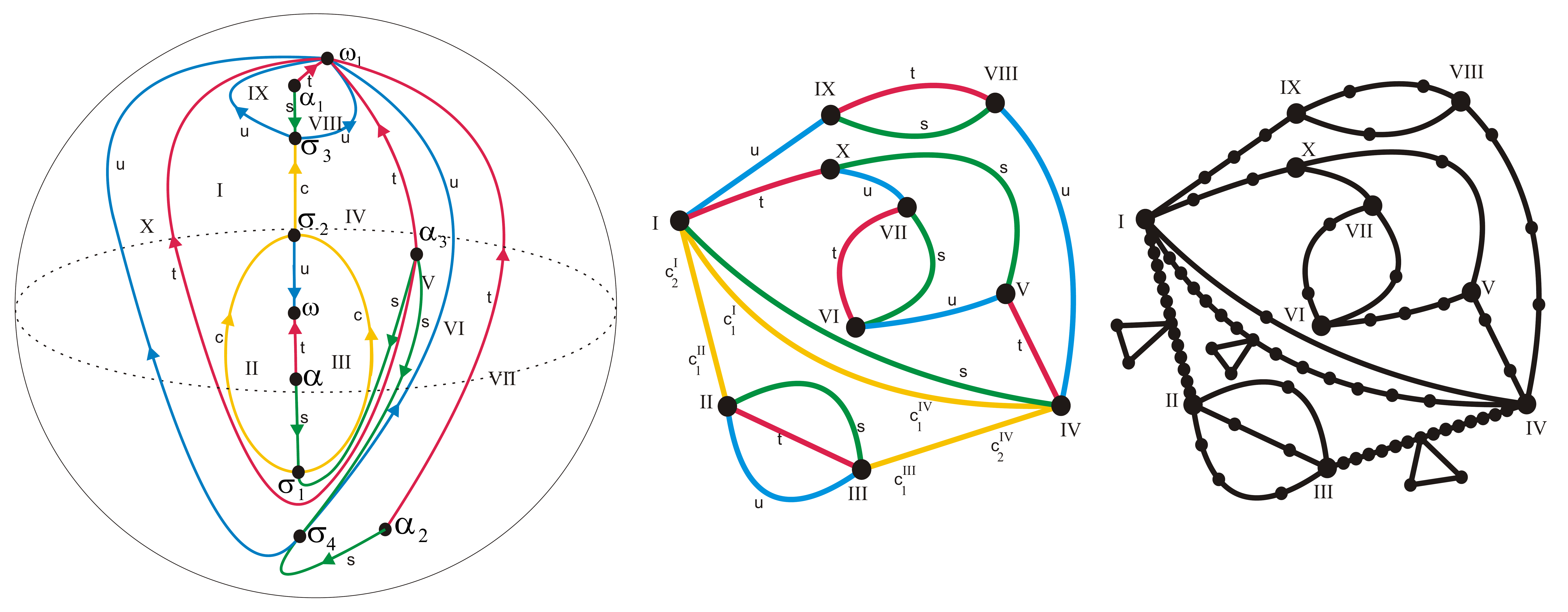}} \caption{$f^t$, $\Gamma_{\mathcal M}$ and $\Gamma^*_{{\mathcal M}}$}\label{AlgRazb}
\end{figure}

\begin{lemma} \label{4-colour graphs}
Graphs $\Gamma_{\mathcal M}$ and $\Gamma_{\mathcal M'}$ are isomorphic iff graphs $\Gamma^*_{\mathcal M}$ and $\Gamma^*_{\mathcal M'}$ are isomorphic.
\end{lemma}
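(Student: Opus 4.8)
The plan is to prove the two implications separately, the forward one being routine and the reverse one carrying the real content. For the forward direction I would start from a four-colour isomorphism $\psi\colon\Gamma_{\mathcal M}\to\Gamma_{\mathcal M'}$. Since $\psi$ preserves colours, it sends $s$-, $t$-, $u$-edges to edges of the same colour, each subdivided in exactly the same way on both sides; and since $\psi$ preserves the numbers of $c$-edges, a $c$-edge $e=(a,b)$ subjected to $(num_a(e)+5,num_b(e)+5)$-subdivision is matched with $\psi(e)$, whose two arms and triangle gadget have the same lengths because $num_a(e)=num_{\psi(a)}(\psi(e))$ and $num_b(e)=num_{\psi(b)}(\psi(e))$. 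Extending $\psi$ so that it maps the subdivision vertices of every edge to those of its image in the natural order, and each triangle $\{v,u,w\}$ to the corresponding triangle, then yields a well-defined isomorphism $\Gamma^*_{\mathcal M}\to\Gamma^*_{\mathcal M'}$.

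The reverse direction is the hard part: starting from an abstract graph isomorphism $\Phi\colon\Gamma^*_{\mathcal M}\to\Gamma^*_{\mathcal M'}$, I must reconstruct a colour- and number-preserving isomorphism of the four-colour graphs, using only structural invariants that $\Phi$ necessarily preserves. First I would show that the only $3$-cycles of $\Gamma^*_{\mathcal M}$ are the triangles $\{v,u,w\}$ of the $c$-edge gadgets: because $\Gamma_{\mathcal M}$ is simple (Lemma~\ref{GmAdm}) and every edge is subdivided into a path of length at least $2$, any cycle inherited from $\Gamma_{\mathcal M}$ has length at least $6$, while distinct gadget triangles are vertex-disjoint and joined to the rest of the graph only through their degree-$4$ vertex $v$. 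Hence $\Phi$ maps gadget triangles to gadget triangles and, by preservation of degrees, sends each branch vertex $v$ to a branch vertex.

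Next I would characterise the original vertices of $\Gamma_{\mathcal M}$ inside $\Gamma^*_{\mathcal M}$ as exactly the vertices of degree at least $3$ lying on no triangle: the internal subdivision vertices and the triangle vertices $u,w$ have degree $2$, the branch vertex $v$ has degree $4$ but lies on a triangle, whereas an original vertex has degree $3+n_b\ge 3$ and, by the previous step, lies on no triangle. Since $\Phi$ preserves degrees and triangles, it restricts to a bijection $\phi$ between the original vertices of the two graphs. Deleting the original and triangle vertices splits each graph into maximal paths through degree-$2$ vertices, one such thread for each $s$-, $t$-, $u$-edge and two threads meeting at a branch vertex for each $c$-edge; as $\Phi$ preserves these threads together with their lengths and their original endpoints, it sends $s$-, $t$-, $u$-edges to edges of the same colour (threads of length $2,3,4$) and $c$-edges to $c$-edges. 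Finally, for a $c$-edge $e=(a,b)$ the arm from $a$ to $v$ has length $num_a(e)+6$ and the arm from $v$ to $b$ has length $num_b(e)+6$, so preservation of thread lengths forces $num_{\phi(a)}(\phi(e))=num_a(e)$ and $num_{\phi(b)}(\phi(e))=num_b(e)$. Thus $\phi$ preserves colours and $c$-numbers and is the desired four-colour isomorphism.

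I expect the main obstacle to lie entirely in this reverse direction, and within it in the verification that the structural invariants (being on a triangle, degree, thread length) recover the colours and the $c$-numbering \emph{without ambiguity}. This is precisely what the choice of pairwise distinct subdivision lengths $1,2,3$ for $s,t,u$ and the $+5$ offset for the $c$-edge arms is engineered to guarantee, and the argument must confirm that no accidental triangle or degree coincidence can spoil the characterisation.
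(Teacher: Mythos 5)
Your proof is correct and takes essentially the same route as the paper's: the forward direction is the routine extension of the four-colour isomorphism to the subdivision vertices, and the reverse direction reconstructs $\Gamma_{\mathcal M}$ inside $\Gamma^*_{\mathcal M}$ using exactly the invariants the paper uses --- the original vertices are those of degree at least three lying on no triangle, the lengths of the connecting paths recover the colours $s,t,u$, and the lengths of the two arms of each triangle gadget recover $num_a(e)$ and $num_b(e)$. If anything, your write-up is more explicit than the paper's, which asserts as ``clear'' what you actually verify, namely that the only triangles of $\Gamma^*_{\mathcal M}$ are the gadget triangles because $\Gamma_{\mathcal M}$ is simple and every edge is subdivided into a path of length at least two.
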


\begin{proof}
Obviously, the graphs $\Gamma^*_{\mathcal M}$ and $\Gamma^*_{\mathcal M'}$ can be uniquely constructed with the graphs $\Gamma_{\mathcal M}$ and $\Gamma_{\mathcal M'}$. Let us show that the opposite fact is also true. It will follow the lemma.

Each polygonal region of $\Delta_{f^t}$ has at least three sides and, therefore, every vertex of $\Gamma_{\mathcal M}$ has at least three neighbours in this graph. Clearly, in the graph $\Gamma^*_{\mathcal M}$ none of the vertices of the graph $\Gamma_{\mathcal M}$ belongs to a triangle.
Hence, the set of vertices of $\Gamma_{\mathcal M}$ consists of those and only those vertices of $\Gamma^*_{\mathcal M}$ that
have at least three neighbours and do not belong to triangles. Deleted all vertices of $\Gamma_{\mathcal M}$ from $\Gamma^*_{\mathcal M}$, we obtain the disjunctive union of connected subgraphs, each of which is a path or a path
with a triangle joined to an internal vertex of the path. These connected subgraphs are indicators of the existence of edges
between the corresponding vertices of $\Gamma_{\mathcal M}$. If a subgraph is a path, then its length determines a
colour in the set $\{s,t,u\}$ of the corresponding edge of $\Gamma_{\mathcal M}$. If a subgraph is a path with
a joined triangle, then it corresponds to some $c$-edge $e=(a,b)$ of $\Gamma_{\mathcal M}$. Deleted all vertices
of the triangle in the subgraph, we obtain two paths, whose lengths show the numbers of $e$ in the sets of $c$-edges
incident to the vertices $a$ and $b$, respectively. Thus, knowing the graph $\Gamma^*_{\mathcal M}$, one can uniquely restore the graph $\Gamma_{\mathcal M}$.
\end{proof}

Let us estimate the number of vertices of $\Gamma^*_{\mathcal M}$, assuming that $\Gamma_{\mathcal M}$
has $n$ vertices and $m$ edges. Clearly, any of $m$ edges of the graph $\Gamma_{\mathcal M}$ corresponds to some subgraph of
the graph $\Gamma^*_{\mathcal M}$ that has at most $2n+18$ vertices. Therefore, the graph $\Gamma^*_{\mathcal M}$
has at most $(2n+18)\cdot m$ vertices and it can computed in polynomial time with the graph $\Gamma_{\mathcal M}$. Notice that
$\Gamma^*_{\mathcal M}$ can be embedded into the ambient surface. By this fact and Lemma \ref{4-colour graphs},
the isomorphism problem for four-colour graphs can be reduced in polynomial time to the same problem for simple graphs,
embedded into a fixed surface. Hence, the following result is true.

\begin{lemma} \label{isomorphism for four-colour graphs}
Isomorphism of four-colour graphs can be recognized in polynomial time.
\end{lemma}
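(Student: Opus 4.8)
The plan is to reduce the isomorphism problem for four-colour graphs to the isomorphism problem for simple graphs embeddable into a fixed surface, and then to invoke the polynomial-time algorithm of Proposition \ref{Miller}. The bridge between the two problems is precisely the subdivision construction $\Gamma_{\mathcal M}\mapsto\Gamma^*_{\mathcal M}$ introduced above, together with Lemma \ref{4-colour graphs}, both of which may now be used as established facts.

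First I would take the two input four-colour graphs $\Gamma_{\mathcal M}$ and $\Gamma_{\mathcal M'}$ and build the associated simple graphs $\Gamma^*_{\mathcal M}$ and $\Gamma^*_{\mathcal M'}$. The construction is purely local — each $s$-, $t$-, $u$-edge is replaced by a path of the prescribed length, and each $c$-edge by a path carrying a single triangle whose position encodes the two incidence numbers — so it can be carried out in polynomial time; as already estimated, if $\Gamma_{\mathcal M}$ has $n$ vertices and $m$ edges, then $\Gamma^*_{\mathcal M}$ has at most $(2n+18)\cdot m$ vertices. Next I would appeal to Lemma \ref{4-colour graphs}, which guarantees that $\Gamma_{\mathcal M}$ and $\Gamma_{\mathcal M'}$ are isomorphic if and only if $\Gamma^*_{\mathcal M}$ and $\Gamma^*_{\mathcal M'}$ are isomorphic; hence deciding isomorphism of the four-colour graphs is equivalent to deciding isomorphism of their simple counterparts. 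Finally, since every subdivision can be drawn inside a thin tubular neighbourhood of the original edge, the triangle fitting into a small disk attached to an interior point of that edge, the graph $\Gamma^*_{\mathcal M}$ remains embeddable into the ambient surface; applying Proposition \ref{Miller} to the $N$-vertex simple graphs with $N=O(nm)$ yields a running time $O(N^{O(g)})$, which is polynomial for the fixed genus $g$ of the ambient surface.

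The main obstacle I expect is not the combinatorics of the reduction, which is fully handled by Lemma \ref{4-colour graphs}, but the verification that $\Gamma^*_{\mathcal M}$ retains embeddability into the same fixed surface, so that the genus $g$ appearing in the exponent of Proposition \ref{Miller} does not grow with the instance and the bound $O(N^{O(g)})$ stays polynomial in the input length. The point to secure is that subdividing an edge and attaching a triangle along it are operations realisable within an arbitrarily small neighbourhood of the edge's Jordan curve, so the embedding genus of $\Gamma^*_{\mathcal M}$ is bounded by that of $\Gamma_{\mathcal M}$, which in turn is bounded by the genus of the ambient surface on which the flow is defined. Once this is in place, the lemma follows by composing the polynomial-time reduction with the polynomial-time algorithm of Proposition \ref{Miller}.
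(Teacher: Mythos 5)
Your proposal is correct and follows the paper's own proof essentially verbatim: construct the simple graphs $\Gamma^*_{\mathcal M}$, $\Gamma^*_{\mathcal M'}$ by the prescribed subdivisions in polynomial time (with the same $(2n+18)\cdot m$ size bound), invoke Lemma \ref{4-colour graphs} for the equivalence, note that $\Gamma^*_{\mathcal M}$ remains embeddable into the ambient surface, and apply Proposition \ref{Miller}. Your tubular-neighbourhood argument for embeddability merely spells out what the paper asserts in a single sentence, so the two arguments coincide.
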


Next, we consider the isomorphism problem for the class of equipped graphs.
Let $\Upsilon^*_{\phi^t}$ be an equipped graph. We will modify it as follows.
We delete all $({\mathcal M},{\mathcal L})$-edges and all $({\mathcal L},{\mathcal M})$-edges
(also forget about their associated $tu$-cycles and $st$-cycles) and replace each ${\mathcal M}$-vertex
by the corresponding graph $\Gamma_{\mathcal M}$. We also connect every
${\mathcal L}$-vertex with all vertices of the associated $tu$-cycle ($st$-cycle)
in the corresponding graph $\Gamma_{\mathcal M}$ by edges oriented as
$({\mathcal M},{\mathcal L})$ (resp. $({\mathcal L},{\mathcal M})$), arrange
orientation of the cycle in $\Gamma_{\mathcal M}$ (preserving colors of its edges)
as it was in $({\mathcal M},{\mathcal L})$ (resp. $({\mathcal L},{\mathcal M})$).
The resultant graph $\Gamma_t$ can be embedded into the ambient surface, as this is true for $\Upsilon^*_{\phi^t}$ and $\Gamma_{\mathcal M}$, for any ${\mathcal M}$-vertex, and by the fact that polygonal regions corresponding to ${\mathcal L}$-vertices and to their neighbours in $tu$-cycles and $st$-cycles have a common border.

We add two degree one neighbours to each ${\mathcal A}$-vertex, three degree one neighbours to each ${\mathcal L}$-vertex,
four degree one neighbours to each ${\mathcal E}$-vertex with the ``-'' weight, and
five degree one neighbours to each ${\mathcal E}$-vertex with the ``+'' weight. Additionally, in
the graph $\Gamma_t$, we perform (2,1)-subdivision of any non-colored oriented edge, (3,1)-subdivision of any
oriented $s$-edge, (4,1)-subdivision of any oriented $t$-edge, (5,1)-subdivision of any
oriented $u$-edge. Finally, for any ${\mathcal M}$, we apply subdivisions of all non-oriented edges
in $\Gamma_{\mathcal M}$ as it was described earlier in the definition of $\Gamma^*_{\mathcal M}$. Clearly, the resultant graph $\Gamma^*_t$ is simple, embeddable
into the ambient surface, and it can be computed in polynomial time.

\begin{lemma} \label{equipped graphs}
Equipped graphs $\Upsilon^*_{\phi^t}$ and $\Upsilon^*_{\phi^{t'}}$ are isomorphic if and only if $\Gamma^*_t$ and $\Gamma^*_{t'}$
are isomorphic.
\end{lemma}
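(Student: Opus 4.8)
The plan is to prove the two implications separately, with the forward one being essentially formal and the reverse one carrying all the content. In one direction, the passage from $\Upsilon^*_{\phi^t}$ to $\Gamma^*_t$ is a completely deterministic procedure: deleting the $(\mathcal M,\mathcal L)$- and $(\mathcal L,\mathcal M)$-edges, substituting each $\Gamma_{\mathcal M}$, attaching the $\mathcal L$-vertices to their cycles, adding the prescribed numbers of pendant vertices, and performing the prescribed subdivisions are all operations that commute with relabellings of vertices and edges. Hence any isomorphism $\xi$ of equipped graphs $\Upsilon^*_{\phi^t}$ and $\Upsilon^*_{\phi^{t'}}$ (which by definition respects vertex types, $\mathcal E$-weights, the four-colour graphs together with their distinguished oriented cycles, and all edge directions) transports gadget-by-gadget to an isomorphism of $\Gamma^*_t$ and $\Gamma^*_{t'}$; I would simply record this naturality.

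For the converse I would show that $\Upsilon^*_{\phi^t}$, \emph{together with its entire equipment}, can be reconstructed from the bare simple graph $\Gamma^*_t$ in a way that depends only on the isomorphism type of $\Gamma^*_t$; an abstract isomorphism $\Phi\colon\Gamma^*_t\to\Gamma^*_{t'}$ then descends to an isomorphism of the equipped graphs. The reconstruction proceeds by peeling off the gadgets in order of decreasing robustness. First I would locate the $\mathcal A$-, $\mathcal L$- and $\mathcal E$-vertices and read off their decorations purely from the number of degree-one neighbours: two pendants mark an $\mathcal A$-vertex, three an $\mathcal L$-vertex, four an $\mathcal E$-vertex of weight ``$-$'', and five an $\mathcal E$-vertex of weight ``$+$''. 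Since the number of pendant neighbours is an isomorphism invariant, this step is canonical. After deleting the pendants and the already-identified special vertices, each block that came from a four-colour graph is recognised exactly as in the proof of Lemma \ref{4-colour graphs}: the vertices of $\Gamma_{\mathcal M}$ are those of degree at least three not lying on a triangle, and the connected pieces left between them are paths (encoding $s$-, $t$-, $u$-edges by their length) or paths with an attached triangle (encoding a $c$-edge, whose two side-lengths recover the two incidence numbers $num_a(e),\,num_b(e)$).

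Next I would recover the edge \emph{orientations} and the $\mathcal L$-to-cycle attachments. Here the point of the asymmetric $(k,1)$-subdivisions is that the lone subdivision vertex on one side breaks the reflective symmetry of a subdivided edge, so that from the two unequal path-lengths of a subdivided oriented edge one reads off both its colour (via the value $k\in\{2,3,4,5\}$ for non-coloured, $s$-, $t$-, $u$-edges) and its direction (via which end carries the extra vertex). In particular each $\mathcal L$-vertex is joined precisely to the vertices of one oriented, coloured cycle of some $\Gamma_{\mathcal M}$; decoding those edges recovers the $tu$- or $st$-cycle, its orientation, and the direction of the former $(\mathcal M,\mathcal L)$- or $(\mathcal L,\mathcal M)$-edge, which is exactly the equipment of item~(3) of Definition~\ref{Y*}. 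Reassembling these pieces --- contracting each reconstructed $\Gamma_{\mathcal M}$-block back to a single $\mathcal M$-vertex and reinstating the oriented cycle-edges as the $(\mathcal M,\mathcal L)$/$(\mathcal L,\mathcal M)$-edges --- yields $\Upsilon^*_{\phi^t}$ with all of its equipment.

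The main obstacle I anticipate is not any single step but the global bookkeeping that makes the reconstruction \emph{unambiguous}: one must check that the various gadgets are pairwise non-confusable as abstract subgraphs, i.e. that a pendant vertex, a degree-two subdivision chain, the triangle of a $c$-edge, and the orientation-marking vertex of a $(k,1)$-subdivision can never be mistaken for one another, and that the chosen numerical offsets (the pendant counts $2,3,4,5$, the subdivision lengths $2,3,4,5$, and the $+5$ shift on $c$-edges) are large and distinct enough that no length collision occurs once the special vertices are removed. This is precisely the delicate, but elementary, case analysis that upgrades Lemma \ref{4-colour graphs} to the equipped setting; verifying it for every combination of vertex type, edge colour, and orientation is where the real work lies, after which the equivalence of the two isomorphism problems follows.
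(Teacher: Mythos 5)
Your proposal is correct and follows essentially the same route as the paper: the forward direction is the deterministic naturality of the construction, and the converse is a unique-reconstruction argument that identifies $\mathcal A$-, $\mathcal L$-, $\mathcal E$-vertices and their weights by pendant counts, decodes colours and orientations from the asymmetric subdivision gadgets, recovers each $\Gamma_{\mathcal M}$ from the remaining blocks as in Lemma \ref{4-colour graphs}, and reassembles the equipped graph. Your explicit flagging of the gadget non-confusability check is in fact a point the paper's proof passes over rather quickly, so your writeup is, if anything, slightly more careful on that score.
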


\begin{proof}
Obviously, the graphs $\Gamma^*_t$ and $\Gamma^*_{t'}$ can be uniquely constructed by the graphs $\Upsilon^*_{\phi^t}$ and $\Upsilon^*_{\phi^{t'}}$. Let us show that the opposite fact is also true. It will follow the lemma.

Notice that any vertex of $\Gamma^*_t$ not belonging to ${\mathcal A}\cup {\mathcal L}\cup {\mathcal E}$
has at most one degree one neighbour. Hence, a vertex of $\Gamma^*_t$ is an ${\mathcal A}$-vertex of $\Upsilon^*_{\phi^t}$ iff it has exactly two degree one neighbours; a vertex of $\Gamma^*_t$ is a ${\mathcal L}$-vertex of $\Upsilon^*_{\phi^t}$ iff it has exactly three degree one neighbours; a vertex of $\Gamma^*_t$ is an ${\mathcal E}$-vertex of the weight ``-'' of $\Upsilon^*_{\phi^t}$ iff it has exactly four degree one neighbours;
 a vertex of $\Gamma^*_t$ is an ${\mathcal E}$-vertex of the weight ``+'' of $\Upsilon^*_{\phi^t}$ iff it has exactly five degree one neighbours.  

Therefore, one can determine all ${\mathcal A}$-, ${\mathcal L}$-, ${\mathcal E}$-vertices
of $\Upsilon^*_{\phi^t}$ in the graph $\Gamma^*_t$. Hence, one can determine all $({\mathcal A},{\mathcal L})$-, $({\mathcal L},{\mathcal A})$-, $({\mathcal L},{\mathcal E})$-, and $({\mathcal E},{\mathcal L})$-edges of $\Upsilon^*_{\phi^t}$, knowing their ends and subgraphs of $\Gamma^*_t$ between them. Considering
a ball of radius five centering at a ${\mathcal L}$-vertex, one can determine orientation of the corresponding $({\mathcal L},{\mathcal M})$-edge or
$({\mathcal M},{\mathcal L})$-edge, all vertices of its associated $tu$-cycle or $st$-cycle in the graph $\Upsilon^*_{\phi^t}$.
Deleted all radius four balls centering at vertices in ${\mathcal A}\cup {\mathcal L}\cup {\mathcal E}$, we obtain the disjunctive union
of subgraphs, which are analogues of the graphs of the form $\Gamma^*_{\mathcal M}$. By any such a subgraph, one can determine the corresponding graph $\Gamma_{\mathcal M}$, associated $tu$-cycles and $st$-cycles and their orientation. Thus, knowing the graph $\Gamma^*_t$, it is possible to uniquely restore the graph $\Upsilon^*_{\phi^t}$.
\end{proof}

Recall that the graph $\Gamma^*_t$ is simple, and it can be computed in polynomial time. By this fact and Lemma \ref{equipped graphs},
the isomorphism problem for equipped graphs can be reduced in polynomial time to the same problem for simple graphs
embedded into a fixed surface. Hence, Theorem \ref{isomorphism for equipped graphs} is true.

\subsection{The Euler characteristic and the surface orientability, a proof of Theorem \ref{orientability and Euler characteristic}}

Now, we consider the problems of calculation of the Euler characteristic of
the ambient surface and determining its orientability. To this end, we need the notion
of a bipartite graph.

\begin{definition}
A simple graph is called \emph{bipartite} if the set of its vertices can be partitioned into two parts such that there is no
an edge incident to two vertices in the same part.
\end{definition}

By K\"onig theorem, a simple graph is bipartite if and only if it does not contain odd cycles \cite{K31}.
For any simple graph with $n$ vertices and $m$ edges, its bipartiteness can be recognized
in $O(n+m)$ time by breath-first search \cite{AT}. Hence, by the second part of Theorem \ref{Realiz},
to check orientability of the ambient surface, we forget about colours of edges of four-colour graphs and
apply 2-subdivision to each their edge, to make them simple. Clearly,
all of the new graphs are bipartite if and only if the ambient surface is orientable. Thus,
orientability of the ambient surface can be tested in linear time on the length of a description
of equipped graphs.

By Lemma \ref{ReBezPer}, the Euler characteristic of a surface $M$
is equal to $\nu_0-\nu_1+\nu_{2}$, where $\nu_0, \nu_1, \nu_{2}$ are the numbers of all $tu$-, $c^*$-, and $st$-cycles of the four-colour graph
$\Gamma_{\mathcal M}$ of a flow without limit cycles on $M$, respectively. Deleted all $c$-edges and all
$s$-edges from $\Gamma_{\mathcal M}$, we obtain the disjunctive sum of $tu$-cycles. Similarly, deleting all
$c$-edges and all $u$-edges, we obtain the disjunctive sum of $st$-cycles. Therefore, $\nu_0$ and $\nu_{2}$
can be computed in time proportional to the sum of the numbers of vertices and edges of $\Gamma_{\mathcal M}$.
If an edge $e=(a,b)$ of $\Gamma_{\mathcal M}$ belongs to some its $c^*$-cycle $C$, then the vertex $a$ has an odd or even number in $C$. Hence, assuming that
this number of $a$ is odd (or even) in $C$, by the number of $e$ in the set of edges incident to $b$,
one can determine an edge in $C$ following the edge $e$. Hence, each edge of $\Gamma_{\mathcal M}$ is contained
in at most two $c^*$-cycles and they can be found in time proportional to the number of edges of $\Gamma_{\mathcal M}$.
Found all these cycles, one can remove $e$ from $\Gamma_{\mathcal M}$ and similarly proceed
our search of $c^*$-cycles in the resultant graph. Clearly, the found cycles will not be met one more time in
the future searches of $c^*$-cycles. Therefore, $\nu_1$ can be computed in time proportional to the square of the number of edges of $\Gamma_{\mathcal M}$.
Thus, by the first part of Theorem \ref{Realiz}, the statement of Theorem \ref{orientability and Euler characteristic} holds.

\pagebreak

\noindent Vladislav E. Kruglov \\ HSE Campus in Nizhny Novgorod, Faculty of Informatics, Mathematics, and Computer Science, Laboratory of Topological Methods in Dynamics. Trainee Researcher; \\ Lobachevsky State University of Nizhny Novgorod, Institute ITMM, Department of Mathematical Physics and Optimal Control. Master's student; \\ E-mail: vekruglov@hse.ru, Tel. 433-57-83

~~

~~

~~

\noindent\textbf{Any opinions or claims contained in this Working Paper do not necessarily reflect the views of HSE.}

~~

\noindent\footnotesize{\textbf{© Kruglov, 2017}}

\noindent\footnotesize{\textbf{© Malyshev, 2017}}

\noindent\footnotesize{\textbf{© Pochinka, 2017}}

\end{document}